\newtheorem{thm}{Theorem}[section]
\newtheorem{lem}[thm]{Lemma}
\newtheorem{prop}[thm]{Proposition}
\newtheorem{claim}[thm]{Claim}
\newtheorem{cor}[thm]{Corollary}
\theoremstyle{definition}
\newtheorem{defn}[thm]{Definition}
\newtheorem{exm}[thm]{Example}
\theoremstyle{remark}
\newtheorem{rem}[thm]{Remark}
\numberwithin{equation}{section}
\DeclareMathOperator*{\pr}{pr}
\DeclareMathOperator*{\Aut}{Aut}
\DeclareMathOperator*{\rk}{rk}
\DeclareMathOperator*{\Der}{Der}
\DeclareMathOperator*{\Spec}{Spec}
\DeclareMathOperator*{\End}{End}
\DeclareMathOperator*{\im}{Im}
\newcommand{\lemma}[2]{\begin{lem}\label{#1}#2\end{lem}}
\newcommand{\eq}[2]{\begin{equation}\label{#1}#2 \end{equation}}
\newcommand{\ea}[1]{\begin{eqnarray*}#1\end{eqnarray*}}
\newcommand{\al}{\alpha}
\newcommand{\be}{\beta}
\newcommand{\pa}{\partial}
\newcommand{\cH}{\mathcal H}
\newcommand{\cJ}{\mathcal J}
\newcommand{\cM}{\mathcal M}
\newcommand{\cN}{\mathcal N}
\newcommand{\cO}{\mathcal O}
\newcommand{\cS}{\mathcal S}
\newcommand{\cY}{\mathcal Y }
\newcommand{\C}{\mathbb C}
\newcommand{\F}{\mathbb F }
\newcommand{\HH}{\mathbb{H}}
\renewcommand{\H}{\HH}
\newcommand{\N}{\mathbb N}
\newcommand{\PP}{\mathbb P}
\renewcommand{\P}{\PP}
\newcommand{\Q}{\mathbb Q}
\newcommand{\R}{\mathbb R}
\newcommand{\Z}{\mathbb Z}
\newcommand{\fg}{\mathfrak{g}}
\renewcommand{\gg}{\fg}
\newcommand{\gl}{\mathfrak{l}}
\newcommand{\gm}{\mathfrak{m}}
\newcommand{\gs}{\mathfrak{s}}
\newcommand{\into}{\hookrightarrow}
\newcommand{\ra}{\rightarrow}
\newcommand{\half}{{1\over 2}}
\newcommand{\bra}{{\langle}}
\newcommand{\ket}{{\rangle}}
\newcommand{\bs}{\bigskip}
\newcommand{\kerr}{{\mbox{ker~}}}
\renewcommand{\ker}{\kerr}
\newcommand{\sol}{{\mbox{sol}}}
\newcommand{\bl}{\hskip.2in}
\newcommand{\Cx}{\mathbb{C}^\times}
\DeclareMathOperator*{\re}{Re}
\newcommand{\xra}{\xrightarrow}
\newcommand{\quash}[1]{}
\def\comment#1{{}}
\def\question#1{{}}
\begin{document}

\title[\resizebox{5.8in}{!}{Differential zeros of period integrals and generalized hypergeometric functions}]{Differential zeros of period integrals and generalized hypergeometric functions}
\author{Jingyue Chen, An Huang, Bong H. Lian, Shing-Tung Yau}

\date{}

\dedicatory{}


\begin{abstract}
In this paper, we study the zero loci of locally constant sheaves of the form $\delta\Pi$, where $\Pi$ is the period sheaf of the universal family of CY hypersurfaces 
in a suitable ambient space $X$, and $\delta$ is a given differential operator on the space of sections $V^\vee=\Gamma(X,K_X^{-1})$. Using earlier results of three of the authors and their collaborators, we give several different descriptions of the zero locus of $\delta\Pi$. As applications, we prove that the locus is algebraic and in some cases, non-empty. We also give an explicit way to compute the polynomial defining equations of the locus in some cases. This description gives rise to a natural stratification to the zero locus.
\end{abstract}

\maketitle

\begin{spacing}{}\parskip=0cm \tableofcontents \end{spacing} 


\section{Introduction}

Zeros of special functions have been of interests to many authors since the times of Riemann. He of course famously conjectured that the zeros of the Riemann zeta functions occur only on a certain critical line. Inspired by works of Stieltjes, Hilbert and Klein, Hurwitz \cite{Hu1}\cite{Hu2} and Van Vleck \cite{V} determined the number of zeros of the Gauss hypergeometric function ${}_2F_1(a,b,c;z)$ for real $a,b,c$. Subsequently, many authors generalized their results to confluent hypergeometric functions. Runckel \cite{R} gave a simpler proof of the results of Hurwitz and Van Vleck using the argument principle.
Eichler and Zagier \cite{EZ} gave a complete description of the zeros of the Weierstrass $\wp$ function in terms of a classical Eisenstein series. Duke and Imamo\={g}lu \cite{DI} later used it to prove transcendence of values of certain classical generalized hypergeometric functions at algebraic arguments. 
More recently following Hille \cite{H}, Ki and Kim \cite{KiK} studied the zeros of generalized hypergeometric functions of the form ${}_pF_p$. For real parameters for such a function, they showed that it can only have finitely many zeros, and that they are all real. 

Since all (except the Riemann zeta function) of those special functions are solutions to ordinary differential equations, it is natural to consider the higher dimensional analogues of these functions and their zeros. 
It is well known that the theory of Gel'fand-Kapranov-Zelevinsky (GKZ) hypergeometric functions \cite{GKZ} generalize 
classical {\it special functions}, including the Euler-Gauss, Appell, Clausen-Thomae, Lauricella hypergeometric functions, and
their multivariable generalizations. Therefore, GKZ hypergeometric functions can be viewed as generalized special functions.
Since the theory of tautological systems generalizes the GKZ theory \cite{LY}, solutions to tautological systems 
and their derivatives can be thought of as further generalizations of special functions. 
The zero loci of their derivatives amount to zeros of these vast generalizations of those
for classical special functions.

In this paper, we shall study the zeros of derivatives of GKZ hypergeometric functions and their generalizations in the context of Calabi-Yau geometry. It is well-known that period integrals of CY hypersurfaces in a toric variety are GKZ hypergeometric functions. Moreover, since these functions are local sections of locally constant sheaves, each admits a multi-valued analytic continuation. Thus it is natural to consider zero loci that are monodromy invariant. Recall that the period sheaf $\Pi$ of the universal family of smooth CY hypersurfaces in a suitable ambient space $X$ form a locally constant sheaf, which is generated by pairings between a nonvanishing holomorphic top form and middle dimensional cycles on a CY hypersurface. Since every such hypersurface has at least one nonzero period, the zero locus of the period sheaf is always empty. However, as it turns out, it is more natural to consider the zero locus of a locally constant sheaf of the form $\delta\Pi$, where $\delta$ is a differential operator on the affine space $V^\vee=\Gamma(X,K_X^{-1})$. This zero locus will be the main object of study in this paper.

\bs

{\it We will follow closely the notations introduced in \cite{HLZ}\cite{BHLSY}.}
Given a Lie algebra $\hat\gg$, a $\hat\gg$-module $V^\vee$, and a $\hat\gg$-invariant ideal $I$ of the commutative algebra $\C[V]$, then a {\it tautological system} $\tau$ is a $D_{V^\vee}$-module of the form
\[\tau=D_{V^\vee}/(D_{V^\vee}\tilde{I}+D_{V^\vee}\hat\gg)\]
where $\tilde{I}\subset D_{V^\vee}$ is the Fourier transform of $I$.
In this paper, we consider the following special case of $\tau$.

Let $G$ be a connected complex algebraic group. Let $X$ be a complex projective $G$-variety and let $L$ be a very ample $G$-equivariant line bundle over $X$. This gives rise to a $G$-equivariant embedding 
\[X\ra\P(V),\]
where $V=\Gamma(X,L)^\vee$. We assume that the action of $G$ on $X$ is locally effective, i.e. $\ker(G\ra\Aut (X))$ is finite. 
Let $\hat{G}:=G\times\Cx$, whose Lie algebra is $\hat{\gg}=\gg\oplus \C e$, where $e$ acts on $V$ by identity. We denote by $Z:\hat{G}\ra \text{GL}(V)$ the group action induced on $V$, and by $Z:\hat{\gg}\ra\End(V)$ the corresponding Lie algebra representation. Note that under our assumption, $Z:\hat{\gg}\ra\End(V)$ is injective.

Let $\hat{\iota}: \hat{X}\subset V $ be the cone of $X$, and $I(\hat{X})$ its defining ideal. Let $\beta:\hat{\gg}\ra \C$ be a Lie algebra homomorphism. Then a {\it tautological system } as defined in \cite{LSY}\cite{LY} is the cyclic $D$-module on $V^\vee$
\[\tau(X,L,G,\beta)=D_{V^\vee}/\big(D_{V^\vee}\tilde{I}+D_{V^\vee}(Z(x)+\beta(x), x\in\hat{\gg})\big),\]
where $$\tilde{I}=\{\tilde{P}\mid P\in I(\hat{X})\}$$
is the ideal of the commutative subalgebra $\C[\partial]\subset D_{V^\vee}$ obtained by the Fourier transform of $I(\hat{X})$. Here $\tilde{P}$ denotes the Fourier transform of $P$.

Given a basis $\{a_1,\ldots, a_n\}$ of $V$, we have $Z(x)=\sum_{ij}x_{ij}a_i\frac{\partial}{\partial{a_j}}$, where $(x_{ij})$ is the matrix representing $x$ in the basis. Since the $a_i$ are also linear coordinates on $V^\vee$, we can view $Z(x)\in\Der \C[V^\vee]\subset D_{V^\vee}$. In particular, the identity operator $Z(e)\in\End V$ becomes the Euler vector field on $V^\vee$.

Let $X$ be an $m$-dimensional compact complex manifold such that its anti-canonical line bundle $K_X^{-1}$ is very ample. Let $L:=K_X^{-1}$. We shall regard the basis elements $a_i$ of $V=\Gamma(X,L)^{\vee}$ as linear coordinates on $V^{\vee}$. Let $B:=\Gamma(X,L)_{sm}\subset V^{\vee}$ be the space of smooth sections.
 Let $\pi:\cY\ra B$ be the family of smooth CY hyperplane sections $Y_b\subset X$, and let $\H^{\text{top}}$ be the Hodge bundle over $B$ whose fiber at $b\in B$ is the line $\Gamma(Y_b,\omega_{Y_b})\subset H^{m-1}(Y_b)$. In \cite{LY} the period integrals of this family are constructed by giving a canonical trivialization of $\H^{\text{top}}$. Let $\Pi$ be the period sheaf of this family, i.e. the locally constant sheaf generated by the period integrals.
 Let $G$ be a connected algebraic group acting on $X$.

\begin{thm}[See \cite{LY}]\label{tautological}
The period integrals of the family $\pi:\cY\ra B$ are solutions to 
\[\tau\equiv\tau(X,K_X^{-1},G,\beta_0)\]
where $\beta_0$ is the Lie algebra homomorphism with $\beta_0(\gg)=0$ and $\beta_0(e)=1$.
\end{thm}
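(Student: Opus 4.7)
The plan is to realize the periods explicitly via a residue construction on $X$ and then check the two families of defining relations of $\tau(X,K_X^{-1},G,\beta_0)$ separately. Since $L = K_X^{-1}$, the line bundle $K_X\otimes L$ is canonically isomorphic to $\cO_X$ and possesses a distinguished global section $\Omega$. For any $f \in B\subset V^\vee = \Gamma(X,L)$, the meromorphic section $\Omega/f$ of $K_X$ has a simple pole along $Y_f$, and its Poincar\'e residue is a nowhere-vanishing holomorphic top form on $Y_f$ which canonically trivializes $\H^{\text{top}}$. The periods then take the form
\[\Pi_\gamma(f) \;=\; \int_\gamma \mathrm{Res}(\Omega/f) \;=\; \frac{1}{2\pi i}\oint_{T(\gamma)}\frac{\Omega}{f},\]
where $T(\gamma)\subset X\setminus Y_f$ is a tube around $\gamma$ chosen to be locally constant as $f$ varies in $B$; differentiation in $f$ may therefore be performed under the integral sign.

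To verify $\tilde P(\partial)\Pi_\gamma = 0$ for every $P \in I(\hat X)$, let $\{s_i\}\subset\Gamma(X,L)$ be the basis dual to $\{a_i\}\subset V$, so that $f = \sum_i z_i(f)\,s_i$ in the linear coordinates $z_i$ on $V^\vee$. A direct induction yields
\[\partial^\alpha(1/f) \;=\; (-1)^{|\alpha|}\,|\alpha|!\,\frac{s^\alpha}{f^{|\alpha|+1}}.\]
Hence for a homogeneous $P$ of degree $k$, passing $\tilde P(\partial)$ under the integral produces (up to a sign) $k!\,P(s_1,\ldots,s_n)/f^{k+1}$, and the vanishing of $P$ on the cone $\hat X$ is equivalent to the vanishing of the section $P(s_1,\ldots,s_n)\in\Gamma(X,L^k)$, so the integral is zero. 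For the $\hat\gg$-relations, the canonical isomorphism $K_X\otimes L = \cO_X$ is $G$-equivariant, so $\Omega$ is $G$-invariant. Given $x\in\gg$, the one-parameter flow $e^{tx}$ carries $Y_f$ to $Y_{e^{-tx}\cdot f}$ together with the cycle $\gamma$, giving $\Pi_\gamma(e^{tx}\cdot f) = \Pi_\gamma(f)$ locally, whence $Z(x)\Pi_\gamma = 0 = -\beta_0(x)\Pi_\gamma$. For $e\in\hat\gg$, the scaling $\Pi_\gamma(\lambda f) = \lambda^{-1}\Pi_\gamma(f)$ shows $Z(e)\Pi_\gamma = -\Pi_\gamma = -\beta_0(e)\Pi_\gamma$. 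Together these establish all defining relations of $\tau$.

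The main obstacle is not this formal computation but rather the careful construction of the canonical residue trivialization of $\H^{\text{top}}$ on the full base $B$, the verification that this trivialization is $G$-equivariant (which comes down to the naturality of $K_X\otimes L\cong\cO_X$ and the fact that $G$ acts trivially on the canonical one-dimensional global sections of the trivial bundle), and the choice of a family of tubes $T(\gamma)$ locally constant in $f$ justifying the interchange of differentiation and integration. These technical points are established in \cite{LY}, which we would invoke to complete the argument.
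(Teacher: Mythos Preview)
The paper does not give its own proof of this theorem; it simply cites \cite{LY}. Your sketch correctly outlines the argument found there---the Poincar\'e residue construction of the canonical trivialization of $\H^{\text{top}}$, the verification of the Fourier-transformed ideal relations via the identity $\partial^\alpha(1/f)=(-1)^{|\alpha|}|\alpha|!\,s^\alpha/f^{|\alpha|+1}$ and the vanishing of $P(s_1,\ldots,s_n)$ as a section, the $\gg$-relations from $G$-equivariance of $\Omega$, and the Euler relation from homogeneity---and you appropriately defer the technical points to \cite{LY}, exactly as the paper does.
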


In \cite{LSY} and \cite{LY}, it is shown that if $G$ acts on $X$ by finitely many orbits, then 
$\tau$ is regular holonomic. {\it We shall assume this holds throughout the paper.}

\bs

Let $R=\C[V]/I(\hat{X})$. Let $f=\sum a_ia_i^\vee$ be the universal section. Then the Lie algebra $\hat\gg=\fg\oplus\C e$ acts on $ R[V^\vee]e^f$ by the homomorphism
$Z^\vee:\hat\gg\ra\End V^\vee$ which is dual to Lie algebra action $Z$ on $V$. Thus it takes the form
$$Z^\vee(x)=-\sum x_{ij}a^\vee_j{\partial\over\partial a^\vee_i}-\beta(x),\bl x\in\hat\gg.$$
Here $\{a_i\}$,$\{a_i^\vee\}$ are the bases of $V,V^\vee$ dual to each other.
Note that since $I(\hat X)$ is a $\hat\gg$-invariant ideal of $\C[V]$, there is an induced $\hat\gg$-action on $R$ hence on $R[V^\vee]e^f=R[a]e^f$. 
Recall that the $D_{V^\vee}$-module structure on $R[V^\vee]e^f$ is that $a_i\in D_{V^\vee}$ acts by left multiplication, while $\partial_i\in D_{V^\vee}$ acts by the usual derivative $\partial\over\partial a_i$. In particular, this action commutes with the $\hat\gg$-action given by $Z^\vee$, and with left multiplication by $R$.

\begin{thm}\cite{BHLSY},\cite{HLZ}\label{isom}
There is a canonical isomorphism of $D_{V^\vee}$-modules
{\small\begin{align*}
\tau(X,L,G,\beta_0)&\xleftrightarrow{\Phi} R[V^\vee]e^f/
\hat{\gg}(R[V^\vee]e^f)\\ 
1\quad\quad&\longleftrightarrow \quad\quad\quad e^f. 
\end{align*}}
\end{thm}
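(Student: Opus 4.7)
The approach is to send $1 \in \tau$ to the class of $e^f$ and extend by $D_{V^\vee}$-linearity, so that $[P]\mapsto P\cdot e^f$ modulo $\hat\gg\cdot R[V^\vee]e^f$. Verifying well-definedness and constructing an inverse both reduce to the single elementary identity $\partial_i\, e^f = a_i^\vee\, e^f$, which comes directly from $f=\sum_j a_j a_j^\vee$.

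For well-definedness, I would check that each generator of the defining left ideal of $\tau$ annihilates $e^f$ in the quotient. For $\tilde P\in\tilde I$ with $P\in I(\hat X)\subset\C[a^\vee]$, repeated application of $\partial_i e^f = a_i^\vee e^f$ gives $\tilde P\cdot e^f = P(a^\vee)\,e^f$, which vanishes in $R[V^\vee]e^f$ since $R = \C[a^\vee]/I(\hat X)$. For the Lie algebra relations, a direct comparison of
\[ Z(x)\,e^f \;=\; \sum_{ij} x_{ij}\, a_i a_j^\vee\, e^f \quad\text{and}\quad Z^\vee(x)\,e^f \;=\; -\sum_{ij} x_{ij}\, a_i a_j^\vee\, e^f - \beta_0(x)\,e^f \]
yields $(Z(x)+\beta_0(x))\,e^f = -Z^\vee(x)\,e^f\in\hat\gg\cdot R[V^\vee]e^f$, as required.

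For bijectivity, I would first exhibit a linear isomorphism $D_{V^\vee}\to\C[a,a^\vee]\,e^f$ via $D\mapsto D\cdot e^f$, using that $a^\beta \partial^\alpha\mapsto a^\beta (a^\vee)^\alpha e^f$ sends a PBW basis to linearly independent elements. Under this bijection, $D_{V^\vee}\tilde I\cdot e^f$ matches $I(\hat X)\cdot\C[a,a^\vee]\,e^f$, since $D_{V^\vee}$ commutes with multiplication by elements of $\C[a^\vee]$; passing to quotients gives $D_{V^\vee}/D_{V^\vee}\tilde I\cong R[V^\vee]e^f$. Finally, the $D_{V^\vee}$-action (in the $a$-variables) commutes with the $\hat\gg$-action via $Z^\vee$ (in the $a^\vee$-variables), so $D(Z(x)+\beta_0(x))\,e^f = -x\cdot (D\,e^f)$, which shows that the image of $D_{V^\vee}(Z+\beta_0)$ corresponds exactly to $\hat\gg\cdot R[V^\vee]e^f$. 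Taking the further quotient produces the desired isomorphism $\Phi$, with inverse characterized by $e^f\mapsto 1$.

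The main obstacle is purely notational: keeping track of the various dualities ($V$ versus $V^\vee$, $\beta$ versus $\beta_0$, the Fourier transform on the ideal versus the dual Lie algebra representation $Z^\vee$). Once conventions are aligned, the theorem reduces essentially to the identity $\partial_i e^f = a_i^\vee e^f$ together with the fact that the two commuting actions on $R[V^\vee]e^f$ act in disjoint sets of variables.
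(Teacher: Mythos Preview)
The paper does not give a proof of this theorem; it is quoted as a result of \cite{BHLSY} and \cite{HLZ} and used as a black box. Your direct argument is correct and is essentially the one found in those references: the assignment $D\mapsto D\cdot e^f$ sends the PBW basis $a^\beta\partial^\alpha$ of $D_{V^\vee}$ to the monomial basis $a^\beta(a^\vee)^\alpha e^f$ of $\C[a,a^\vee]e^f$, hence is a linear isomorphism; under it $D_{V^\vee}\tilde I$ matches $I(\hat X)\,\C[a,a^\vee]e^f$ because the $D_{V^\vee}$-action (in the $a$-variables) commutes with multiplication by $a^\vee$-polynomials, giving $D_{V^\vee}/D_{V^\vee}\tilde I\cong R[V^\vee]e^f$; and the identity $(Z(x)+\beta_0(x))e^f=-Z^\vee(x)e^f$, together with the fact that the $D_{V^\vee}$- and $Z^\vee$-actions act in disjoint sets of variables and hence commute, identifies the image of $D_{V^\vee}(Z(\hat\gg)+\beta_0(\hat\gg))$ with $\hat\gg\cdot R[V^\vee]e^f$. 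One small point worth making explicit in your write-up is that $Z^\vee(x)$ preserves $R[V^\vee]e^f$ (and not just $\C[a,a^\vee]e^f$) precisely because $I(\hat X)$ is $\hat\gg$-invariant, as the paper notes just before the theorem statement; otherwise the quotient by $\hat\gg$ would not make sense at the level of $R$. With that, your argument is a complete self-contained verification of the cited isomorphism.
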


Denote by $\sol(\tau)$ the sheaf of classical solutions to $\tau$. We will prove in Section 2
\begin{thm}\label{equiv0}
Let $\delta\in D_{V^\vee}$, and $b\in V^\vee$.
The following statements are equivalent:
\begin{enumerate}
\item \label{cond1} $\delta \gs(b)=0$ for all $\gs\in \sol(\tau)_b$.

\item  \label{cond2} $\delta e^{f(b)}=0$ in $Re^{f(b)}/\hat{\gg}Re^{f(b)}$, i.e. $\delta e^{f(b)}\in \hat{\gg}Re^{f(b)}$.
\end{enumerate}
\end{thm}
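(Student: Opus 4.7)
The plan is to apply Theorem \ref{isom} to translate both conditions into the language of the fiber $\tau|_b := \tau/\gm_b\tau$, reducing the equivalence to a non-degeneracy statement for a natural evaluation pairing on this fiber. Concretely, I would first check that the substitution map $R[V^\vee]e^f \to Re^{f(b)}$ sending $a_i \mapsto a_i(b)$ is $\hat\gg$-equivariant and has kernel $\gm_b\cdot R[V^\vee]e^f$, so through $\Phi$ it descends to a canonical isomorphism
\[
\tau|_b \;\xrightarrow{\sim}\; Re^{f(b)}/\hat\gg Re^{f(b)}
\]
under which the class of $\delta\cdot 1$ corresponds to $[\delta e^{f(b)}]$. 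Thus condition (2) is exactly the assertion $[\delta\cdot 1]_b = 0$ in $\tau|_b$. The evaluation pairing $\tau|_b \times \sol(\tau)_b \to \C$, $([P],\sigma)\mapsto (P\sigma)(b)$, is well defined because multipliers in $\gm_b$ vanish at $b$. From this the direction $(2)\Rightarrow(1)$ is immediate: if $[\delta\cdot 1]_b = 0$, then $\delta \equiv \sum_i g_i P_i \pmod{J}$ with $g_i \in \gm_b$ and $J$ the defining left ideal of $\tau$, whence $(\delta\sigma)(b) = \sum_i g_i(b)(P_i\sigma)(b) = 0$ for every $\sigma \in \sol(\tau)_b$.

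For the converse $(1)\Rightarrow(2)$, it suffices to show that the pairing is left non-degenerate: every nonzero class in $\tau|_b$ is detected by some local solution. Given a linear functional $\mu \in (\tau|_b)^*$, I would construct a formal $D_{V^\vee}$-linear lift $\hat\sigma_\mu\colon \tau \to \hat\cO_{V^\vee, b}$ by prescribing its Taylor coefficients at $b$ via
\[
\partial^\alpha \hat\sigma_\mu(1)(b) \;=\; \mu\bigl([\partial^\alpha\cdot 1]_b\bigr)
\]
and extending $D$-linearly; $D$-linearity is verified by induction on the order of derivatives. Regular holonomicity of $\tau$ then upgrades $\hat\sigma_\mu$ to a convergent holomorphic solution $\sigma_\mu \in \sol(\tau)_b$ via the standard comparison between formal and analytic solutions for regular holonomic $D$-modules. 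A direct computation yields $(\delta\sigma_\mu)(b) = \mu([\delta e^{f(b)}])$, so if (2) failed, a functional $\mu$ with $\mu([\delta e^{f(b)}])\neq 0$ would produce a solution violating (1).

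The main obstacle is the upgrade from a formal jet to an analytic solution: without regular holonomicity, formal solutions of a holonomic $D$-module can exhibit Stokes-type obstructions and the argument would fail at certain $b$. The verification of $D$-linearity for $\hat\sigma_\mu$ and the final compatibility $(\delta\sigma_\mu)(b) = \mu([\delta e^{f(b)}])$, while combinatorial, are routine once the jet construction is set up.
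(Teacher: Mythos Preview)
Your proposal is correct and follows essentially the same route as the paper. Both arguments identify $\tau/\gm_b\tau$ with $Re^{f(b)}/\hat\gg Re^{f(b)}$ via $\Phi$, set up the evaluation pairing $\tau|_b\times\sol(\tau)_b\to\C$, and reduce the equivalence to left non-degeneracy of that pairing; the only difference is that the paper invokes the isomorphism $\cH om_{D_{V^\vee}}(\tau,\cO_{V^\vee})_b\xrightarrow{\simeq}\cH om_\C(i_b^*\tau,\cO_b)$ for regular holonomic $D$-modules as a black box, whereas you unpack that isomorphism by building a formal solution from each functional $\mu$ and then appealing to the formal--analytic comparison that regularity guarantees.
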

This theorem generalizes \cite[Corollary 4.2]{CHL}.

For any $\delta\in D_{V^\vee}$, we introduce
\begin{equation}\label{Np}
\cN(\delta)=\{b\in B\mid \delta \gs(b)=0,\, \forall \gs\in \sol(\tau)_b\}.
\end{equation}
This will be a main object of study in this paper. By Theorem \ref{equiv0}, we have
\[\cN(\delta)=\{b\in B\mid \delta e^{f(b)}\in \hat{\gg}(Re^{f(b)})\}.\]
In the special case $\delta=p(\partial)\in\C[\partial]$ has constant coefficients, we have
$$p(\partial)e^{f(b)}=p(a^\vee)e^{f(b)}$$
Thus making the identification $R\equiv\tilde R$ by Fourier transform ~~$\tilde{}:R\ra\tilde R$, $p(a^\vee)\mapsto p(\partial)$, we get 
\[\cN(p)\equiv\cN(\tilde p)=\{b\in B\mid p(a^\vee) e^{f(b)}\in \hat{\gg}(Re^{f(b)})\}.\]
This recovers the definition of $\cN(p)$ introduced in \cite{CHL}.

We will prove in Section 5
\begin{thm}
If $\delta\in D_{V^\vee}$ is homogeneous under scaling by $\C^\times$, $\cN(\delta)$ is algebraic.
\end{thm}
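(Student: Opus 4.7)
The approach is to translate the defining condition of $\cN(\delta)$ into a concrete linear-algebra problem with polynomial coefficients in $b$, argue constructibility, and then upgrade to Zariski-closedness using classical-topological closedness.

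By Theorem~\ref{equiv0}, $\cN(\delta)=\{b\in B:\delta e^{f(b)}\in \hat\gg Re^{f(b)}\}$. Writing $\delta=\sum c_{IJ}\,a^I\partial^J$ with $|I|-|J|=d$ constant (by homogeneity), a direct computation using $\partial^J e^f=(a^\vee)^J e^f$ yields $\delta e^f = Q(a,a^\vee)\,e^f$ for $Q:=\sum c_{IJ}\,a^I(a^\vee)^J$, hence $\delta e^{f(b)}=Q(b,a^\vee)\,e^{f(b)}$. The element $Q(b,a^\vee)\in R$ has $a^\vee$-degree uniformly bounded by $N:=\mathrm{ord}(\delta)$ and depends polynomially on $b$. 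I would then introduce the $b$-dependent linear map
\[
\mu_b:\hat\gg\otimes R\to R, \qquad \mu_b(x\otimes r):=Z^\vee(x)(r)-L_b(x)\,r, \qquad L_b(x):=\sum_{i,j} x_{ij}b_i a_j^\vee\in R_1,
\]
and check (by the Leibniz computation $Z^\vee(x)(re^{f(b)})=\mu_b(x\otimes r)\,e^{f(b)}$) that $\hat\gg Re^{f(b)}=\mu_b(\hat\gg\otimes R)\cdot e^{f(b)}$, so $\cN(\delta)=\{b:Q(b,\cdot)\in \mu_b(\hat\gg\otimes R)\}$. Decomposing $\mu_b=\mu^{(0)}+\mu_b^{(1)}$ into its degree-preserving part and its degree-raising part (which is linear in $b$), one has $\mu_b(\hat\gg\otimes R_{\leq d})\subset R_{\leq d+1}$. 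A uniform truncation argument should show that $Q(b,\cdot)\in \mu_b(\hat\gg\otimes R)$ is equivalent to $Q(b,\cdot)\in \mu_b(\hat\gg\otimes R_{\leq N'})$ for a degree bound $N'=N'(\delta)$ independent of $b$; this finite-dimensional solvability is then detected by the simultaneous vanishing of appropriate minors of the augmented matrix $[\mu_b^{(N')}\mid Q(b,\cdot)]$, polynomial in $b$, so the resulting locus is constructible in $V^\vee$.

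To upgrade constructibility to Zariski-closedness, observe that $\cN(\delta)$ is already closed in the classical (analytic) topology on $B$: locally the solution sheaf $\sol(\tau)$ is a holomorphic local system with basis $\sigma_1,\ldots,\sigma_r$, and $\cN(\delta)$ is the common zero locus of the holomorphic functions $\delta\sigma_1,\ldots,\delta\sigma_r$. A constructible subset of a complex variety which is closed in the classical topology is automatically Zariski-closed: in a decomposition $\cN(\delta)=\bigcup_i (Z_i\setminus W_i)$ with $Z_i$ irreducible Zariski-closed and $W_i\subsetneq Z_i$, the set $Z_i\setminus W_i$ is classically dense in $Z_i$, forcing $Z_i\subset \cN(\delta)$, so $\cN(\delta)$ is a finite union of the $Z_i$. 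The main obstacle is the uniform truncation step: higher-degree components $y_k$ (with $k>N$) can a priori appear in a solution $y$ of $\mu_b(y)=Q(b,\cdot)$ provided the recursive cancellations $\mu^{(0)}(y_k)=-\mu_b^{(1)}(y_{k-1})$ are satisfiable, and ruling out such non-terminating tails uniformly in $b$ appears to require a Castelnuovo--Mumford-type regularity bound on the homogeneous coordinate ring $R=\C[V]/I(\hat X)$. The homogeneity of $\delta$ is used throughout to keep $Q(a,a^\vee)$ bi-homogeneous, to make $\cN(\delta)$ a $\C^\times$-invariant cone, and to pin down the bound on $\deg_{a^\vee}Q$.
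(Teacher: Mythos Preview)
Your proposal has a genuine gap, and you have correctly identified it yourself: the uniform truncation step. The assertion that $Q(b,\cdot)\in\mu_b(\hat\gg\otimes R)$ is equivalent to $Q(b,\cdot)\in\mu_b(\hat\gg\otimes R_{\leq N'})$ for some $N'$ \emph{independent of $b$} is the entire content of the algebraicity claim in your approach, and it is not established. Without it, your scheme-of-minors argument only shows that $\cN(\delta)$ is an \emph{increasing union} of constructible sets (one for each cutoff $N'$), and there is no a priori reason for this union to stabilize or to be constructible. The Castelnuovo--Mumford-style regularity bound you invoke is not available for a general projectively normal $G$-variety $X$ with finitely many orbits; indeed, the paper proves exactly such a degree bound (Lemma~\ref{dbl}) only in the special case $X=\P^m$, $G=SL_{m+1}$, and the proof there hinges on a dimension count matching $\dim R/I$ against $\dim H_0(\hat\gg,Re^{f(b)})$ via the geometric rank formula, which is delicate and does not generalize readily.

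The paper's route is entirely different and avoids the degree bound altogether. It is transcendental: one first shows, using the regular-singularity structure of $\sol(\tau)$ along a resolved normal-crossing boundary (Hironaka), that the analytic closure $\overline{\cN(\delta)}$ in $V^\vee$ is an analytic subvariety (Propositions~\ref{closure} and~\ref{analy}, via the Proper Mapping Theorem). Homogeneity of $\delta$ then makes $\overline{\cN(\delta)}$ a $\C^\times$-cone, so its image in $\P V^\vee$ is analytic and compact, hence algebraic by Chow's theorem; pulling back, $\overline{\cN(\delta)}$ is algebraic, and $\cN(\delta)=\overline{\cN(\delta)}\cap B$. Note that in the paper homogeneity is the crucial input that converts ``analytic in affine space'' to ``algebraic'' via Chow; in your outline, by contrast, homogeneity plays essentially no role (your constructibility-plus-closedness argument, if it went through, would already give Zariski-closedness without any cone condition), which is a sign that your approach is attempting to prove more than what the available tools deliver in this generality.
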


In Sections 6 and 8, we discuss the non-emptiness of $\cN(\delta)$ in a number of cases. In Section 7, we give an explicit way to compute the polynomial equations defining $\cN(\delta)$ in $\P V^\vee$ in the case $X=\P^m$. We also show that $\cN(\delta)$ has a natural stratification in this case.

\bs
\textbf{Acknowledgements.} We would like to thank Masaki Kashiwara for helpful discussions. We also thank Mei-Heng Yueh for helping us with computer calculations. We are grateful to the referees for helpful suggestions and corrections, all of which have now been incorporated into the paper. Research of J.C. is partially supported by a Special Financial Grant from the China Postdoctoral Science Foundation 2016T90080. Part of the work was done during her visit to Brandeis University. B.H.L is partially supported by an NSF FRG grant MS-1564405 and a Simons Collaboration Grant on HMS and Application 2015.

\section{A coinvariant description of differential zeros}

Let $J=D_{V^\vee}\tilde{I}+D_{V^\vee}(Z(x)+\beta_0(x), x\in\hat{\gg})$ be the defining left ideal of a regular holonomic tautological system $\tau$. Then $\tau=D_{V^\vee}/J$.
Since $\tau$ is cyclic, $\sol(\tau)$ can be identified as a subsheaf of local analytic functions in $\cO_{V^\vee}\equiv\cO^{an}_{V^\vee}$ annihilated by the left ideal $J$. Then we have the canonical isomorphism of sheaves
$$
\cH om{}_{D_{V^\vee}}(\tau,\cO_{V^\vee})\to \sol(\tau),\bl \varphi\mapsto \varphi(1).
$$

\begin{thm}\label{equiv}
Let $\delta\in D_{V^\vee}\equiv\C[a_i,\partial_{a_i}]$, and $b\equiv\sum b_i a^\vee_i\in V^\vee$.
The following statements are equivalent:
\begin{enumerate}
\item \label{cond1} $\delta \gs(b)=0$ for all $\gs\in \sol(\tau)_b$,

\item  \label{cond2} $\delta e^{f(b)}=0$ in $Re^{f(b)}/\hat{\gg}Re^{f(b)}$, i.e. $\delta e^{f(b)}\in \hat{\gg}Re^{f(b)}$.

\item \label{2cond22} $\delta\in \gm_bD_{V^\vee}+J $, where $\gm_b:=\langle a_i-b_i\rangle$ is the ideal sheaf of the point $b$.
\end{enumerate}
\end{thm}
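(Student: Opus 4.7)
The plan is to establish the cycle (3)$\Rightarrow$(1), (3)$\Leftrightarrow$(2), and finally the harder converse (1)$\Rightarrow$(3); the first two steps are essentially formal, while the last is where the regular holonomicity of $\tau$ really enters.

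For (3)$\Leftrightarrow$(2), I would pass to fibers at $b$ via Theorem \ref{isom}. Since $\Phi$ is $D_{V^\vee}$-linear with $\Phi(1)=e^f$, we have $\Phi([\delta])=[\delta e^f]$ for every $\delta\in D_{V^\vee}$. Quotienting both sides by $\mathfrak{m}_b$ produces a canonical $\C$-linear isomorphism
\begin{equation*}
\tau(b):=\tau_b/\mathfrak{m}_b\tau_b\xrightarrow{\;\sim\;}Re^{f(b)}/\hat{\gg}Re^{f(b)},\qquad [\delta]\longmapsto[\delta e^{f(b)}],
\end{equation*}
since the $\hat{\gg}$-action via $Z^\vee$ involves only the $a_i^\vee$-variables and commutes with the specialization $a_i\mapsto b_i$. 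Condition (3) is the assertion $[\delta]=0$ in the left-hand side, which under this isomorphism is exactly condition (2).

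For (3)$\Rightarrow$(1), I would write $\delta=\sum_i(a_i-b_i)m_i+j$ with $m_i\in D_{V^\vee,b}$ and $j\in J_b$. For any $\gs\in\sol(\tau)_b$ one has $j\gs=0$, hence $\delta\gs=\sum_i(a_i-b_i)(m_i\gs)$; each summand is an analytic germ at $b$ with a factor vanishing there, so $(\delta\gs)(b)=0$.

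The core of the argument is (1)$\Rightarrow$(3). The key observation is that both conditions concern a single $\C$-bilinear pairing
\begin{equation*}
\sol(\tau)_b\times\tau(b)\to\C,\qquad(\gs,[\delta])\mapsto(\delta\gs)(b),
\end{equation*}
well-defined by the computation above: condition (1) says $\delta$ lies in its right kernel, while (3) asserts $[\delta]=0$ in $\tau(b)$. The proof therefore reduces to showing this pairing is non-degenerate in the second slot, equivalently that the induced map $\sol(\tau)_b\to\Hom_\C(\tau(b),\C)$ is surjective. The hard part will be establishing this surjectivity at every $b$, including points on the singular locus of $\tau$. My preferred route is to invoke regular holonomicity of $\tau$ through the Riemann--Hilbert correspondence, which guarantees that local holomorphic solution germs at any point span the dual of the finite-dimensional fiber. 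Alternatively, transporting the problem through Theorem \ref{isom} realizes each solution as an explicit linear functional on $Re^{f(b)}/\hat{\gg}Re^{f(b)}$ coming from period integrals over middle-dimensional cycles, from which point-separation can be verified directly using the concrete structure of period integrals developed in \cite{HLZ} and \cite{BHLSY}.
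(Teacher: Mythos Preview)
Your proposal is correct and follows essentially the same approach as the paper. Both arguments reduce the equivalence to the non-degeneracy of the pairing $\sol(\tau)_b\times\tau(b)\to\C$, $(\gs,[\delta])\mapsto(\delta\gs)(b)$, and both invoke regular holonomicity of $\tau$ at the key step to conclude that $\sol(\tau)_b\cong\Hom_\C(i_b^*\tau,\C)$; the paper states this isomorphism directly (``Since $\tau$ is regular holonomic, it follows that $\cH om_{D_{V^\vee}}(\tau,\cO_{V^\vee})_b\xrightarrow{\simeq}\cH om_\C(i_b^*\tau,\cO_b)$''), while you phrase it as surjectivity of $\sol(\tau)_b\to\Hom_\C(\tau(b),\C)$ via Riemann--Hilbert. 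The only difference is organizational: the paper proves (1)$\Leftrightarrow$(2) and then (2)$\Leftrightarrow$(3) with a careful setup of evaluation maps $e_b$ and commutative diagrams, whereas you run the cycle (3)$\Leftrightarrow$(2), (3)$\Rightarrow$(1), (1)$\Rightarrow$(3) more directly; but the identification $\tau(b)\cong Re^{f(b)}/\hat\gg Re^{f(b)}$ via $\Phi$ and the use of regular holonomicity are identical in both.
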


\begin{proof}First we prove \eqref{cond1}$\Leftrightarrow$\eqref{cond2}.
Consider the evaluation map
$$e_b: D_{V^\vee,b}\ra \oplus_\alpha\C\partial^\alpha\equiv \C[\partial], \bl \sum g_\alpha\partial^\alpha\mapsto \sum g_\alpha(b)\partial^\alpha.$$
Let $i_b: b\ra V^\vee$ be the inclusion and $\cO_b\equiv \C$ be the constant sheaf over $b$.
\begin{claim}\label{iso}
The morphism
\[e'_b: i^*_b D_{V^\vee}=\cO_b\otimes_{i_b^{-1}\cO_{V^\vee}}D_{V^\vee,b}\xra{\simeq} e_b(D_{V^\vee,b})=\C[\partial],\; 1\otimes \delta\mapsto e_b(\delta)\]
is well-defined and it is an isomorphism.
\end{claim}
\begin{proof}
It is clear that the map $$e'_b: \cO_b\otimes_\C D_{V^\vee,b}\ra e_b(D_{V^\vee,b}),\bl 1\otimes \delta\mapsto e_b(\delta)$$
is well-defined. Let $f\in i_b^{-1}\cO_{V^\vee}=\cO_{V^\vee,b}$, then
 \[1\otimes f\delta-f(b)\otimes\delta\mapsto e_b(f\delta)-f(b)e_b(\delta)=0.\]
Thus $e'_b$ descends and it is well-defined on $i^*_b D_{V^\vee}$.

Surjectivity: For any $\delta_c:=\sum_\alpha c_\alpha\partial^\alpha\in \C[\partial]$ where $c_\alpha\in\C$, we have $\delta_c\in D_{V^\vee}$ and $e_b(\delta_c)=\delta_c$. Thus  $e'_b(1\otimes \delta_c)=e_b(\delta_c)=\delta_c.$

Injectivity: Let $\gm_b:=\langle a_i-b_i\rangle$ be the ideal sheaf of the point $b$. Then $e'_b(1\otimes \sum_\alpha g_\alpha\partial^\alpha)=\sum g_\alpha(b)\partial^\alpha=0$ implies that $g_\alpha\in \gm_b$ for all $\alpha$. Thus $1\otimes \sum_\alpha g_\alpha\partial^\alpha=\sum_\alpha g_\alpha(b)\otimes\partial^\alpha=0$, which means that $\ker e'_b=0$.
\end{proof}

Since $\cO_b\otimes_{i_b^{-1}\cO_{V^\vee}}J_{b}=\{1\otimes\delta\mid\delta\in J_b\}$,
similarly we can show that $$\cO_b\otimes_{i_b^{-1}\cO_{V^\vee}}J_{b}\simeq e_b(J_{b}).$$

Next we claim that $e_b$ induces a map $e_b: \tau_b\ra i_b^*\tau.$
Since $i_b^{-1}\tau=\tau_b$, we have 
$$i_b^*\tau:=\cO_b\otimes_{i_b^{-1}\cO_{V^\vee}}i_b^{-1}\tau=\cO_b\otimes_{i_b^{-1}\cO_{V^\vee}}\tau_b.$$ 
Consider the exact sequence 
\[0\ra J_b\xra{\iota} D_{V^\vee,b}\xra{p}\tau_b=D_{V^\vee,b}/J_b\ra 0.\]
Since tensoring over any ring is right exact, we have
\[\cO_b\otimes_{i_b^{-1}\cO_{V^\vee}}J_b\xra{\cO_b \otimes \iota} \cO_b\otimes_{i_b^{-1}\cO_{V^\vee}}D_{V^\vee,b}\xra{\cO_b\otimes p}\cO_b\otimes_{i_b^{-1}\cO_{V^\vee}}(D_{V^\vee,b}/J_b)\ra 0.\]
Thus 
\[\ker \cO_b\otimes p=\im\cO_b\otimes \iota=\cO_b\otimes_{i_b^{-1}\cO_{V^\vee}} J_b,\]
hence 
\[i_b^*\tau=\cO_b\otimes_{i_b^{-1}\cO_{V^\vee}}(D_{V^\vee,b}/J_b)\simeq (\cO_b\otimes_{i_b^{-1}\cO_{V^\vee}}D_{V^\vee,b})/(\cO_b\otimes_{i_b^{-1}\cO_{V^\vee}}J_b).\]
Therefore by Claim \ref{iso} we have 
\[i_b^*\tau\simeq(\cO_b\otimes_{i_b^{-1}\cO_{V^\vee}}D_{V^\vee,b})/(\cO_b\otimes_{i_b^{-1}\cO_{V^\vee}}J_b)\simeq e_b(D_{V^\vee,b})/e_b(J_b).\]
Now we have a surjective map
\[e_b:\tau_b\ra e_b(D_{V^\vee,b})/e_b(J_b)\simeq i_b^*\tau.\]
Consider the pairing 
\begin{equation}\label{pairing00}
\tau\otimes_\C\cH om{}_{D_{V^\vee}}(\tau,\cO_{V^\vee})\ra \cO_{V^\vee},\bl \delta\otimes\varphi\mapsto\delta(\varphi).
\end{equation}
And note that evaluation is $\cO_{V^\vee}$-bilinear. Taking a $b$-germ of \eqref{pairing00} yields
\[\tau_b\otimes_\C\cH om{}_{D_{V^\vee}}(\tau,\cO_{V^\vee})_b\ra \cO_{V^\vee,b}.\]
Applying $\cO_b\otimes_{i_b^{-1}\cO_{V^\vee}}-$ to both sides,  we get
\begin{equation}\label{tensor}
\alpha: \cO_b\otimes_{i_b^{-1}\cO_{V^\vee}}\tau_b\otimes_\C\cH om{}_{D_{V^\vee}}(\tau,\cO_{V^\vee})_b\ra\cO_b\otimes_{i_b^{-1}\cO_{V^\vee}}\cO_{V^\vee,b}.
\end{equation}
The morphism is given by $\alpha(1\otimes \delta\otimes \varphi)= 1\otimes \varphi(\delta)=e_b(\varphi(\delta))$.
Here 
$$\cO_b\otimes_{i_b^{-1}\cO_{V^\vee}}\cO_{V^\vee,b}=i_b^*(\cO_{V^\vee})=\cO_b.$$ 
Since $\tau$ is regular holonomic, it follows that 
\[\cH om{}_{D_{V^\vee}}(\tau,\cO_{V^\vee})_b\xra{\simeq} \cH om{}_\C(i_b^*\tau,\cO_b),\]
where $\varphi\mapsto \bar{\varphi}$ and $\bar{\varphi}(e_b(\delta)):=e_b(\varphi(\delta))$.

Next, consider the canonical non-degenerate pairing
\begin{equation}\label{pairing1}
\beta: i_b^*\tau\otimes\cH om{}_\C(i_b^*\tau,\cO_b)\ra \cO_b\equiv \C,
\end{equation}
together with pairing \eqref{tensor} we have a diagram
\[\xymatrix{
&\cO_b\otimes_{i_b^{-1}\cO_{V^\vee}}\tau_b\otimes_\C\cH om_{D_{V^\vee}}(\tau,\cO_{V^\vee})_b\ar[d]_{\simeq}^\gamma\ar[r]^{\quad\quad\quad\alpha} &\cO_b\otimes_{i_b^{-1}\cO_{V^\vee}}\cO_{V^\vee,b}\ar@{=}[d]\\
&i_b^*\tau\otimes\cH om_\C(i_b^*\tau,\cO_b)\ar[r]^{\beta} &\cO_b.
}\]
Since $$\beta\circ\gamma((1\otimes \delta)\otimes \varphi)=\beta(e_b(\delta)\otimes \bar{\varphi})
=\bar{\varphi}(e_b(\delta))=e_b(\varphi(\delta))=\alpha(1\otimes \delta\otimes \varphi),$$
 the above diagram commutes.

Since 
\[\cH om{}_{D_{V^\vee}}(\tau,\cO_{V^\vee})\xrightarrow{\simeq}\sol(\tau),\quad \varphi\mapsto \varphi(1),\]
then condition \eqref{cond1}: $\delta \gs(b)=0$ for all $\gs\in \sol(\tau)_b$ is equivalent to 
$$(\delta \varphi(1))(b)=(\varphi(\delta\cdot 1))(b)=e_b(\varphi(\delta))=\beta(e_b(\delta)\otimes \bar{\varphi})=0$$ for all $\varphi\in \cH om_{D_{V^\vee}}(\tau,\cO_{V^\vee}).$
By the non-degeneracy of pairing \eqref{pairing1}, this is equivalent to $e_b(\delta)=0$ in $i^*_b\tau.$

On the other hand, by the isomorphism $$\tau\xrightarrow{\simeq} (R[V^\vee]e^f/\hat{\gg}R[V^\vee]e^f),\; \delta\mapsto \delta e^f,$$
we have
$$i_b^*\tau\xrightarrow{\simeq} i_b^*(R[V^\vee]e^f/\hat{\gg}R[V^\vee]e^f)\simeq i_b^*R[V^\vee]e^f/\hat{\gg}i_b^*R[V^\vee]e^f\simeq Re^{f(b)}/\hat{\gg}Re^{f(b)}.$$
Thus  $e_b(\delta)=0$ in $i_b^*\tau$ is equivalent to $(\delta e^f)(b)=0$ in $Re^{f(b)}/\hat{\gg}Re^{f(b)}$, which is the condition \eqref{cond2}.
This completes the proof of \eqref{cond1}$\Leftrightarrow$\eqref{cond2}.

Next, we prove \eqref{cond2}$\Leftrightarrow$\eqref{2cond22}.

\begin{claim}
The following diagram commutes:
\[\xymatrix{
&\tau\ar[d]_{\Phi}\ar[r]^{e_b} &i^*_b\tau\ar[d]^{\cO_b\otimes\Phi}\\
&R[V^\vee]e^f/\hat{\gg}R[V^\vee]e^f\ar[r]^{e_b} &Re^{f(b)}/\hat{\gg}Re^{f(b)}
}\]
where the $e_b$ are evaluation maps, and 
{\small\ea{\Phi(\sum g_\al\pa^\al)&=&(\sum g_\al\pa^\al )\cdot e^f=\sum g_\al(a^\vee)^\al e^f\\ 
(\cO_b\otimes\Phi)(\sum g(b)_\al\pa^\al)&=&\left((\sum g(b)_\al\pa^\al)\cdot e^f\right)(b)=(\sum g(b)_\al (a^\vee)^\al e^f)(b)\\
&=&\sum g(b)_\al (a^\vee)^\al e^{f(b)}.}}
\end{claim}

Define the map
\[\Theta_b: D_{V^\vee}\ra Re^{f(b)}/\hat{\gg}Re^{f(b)},\quad \delta\mapsto \delta e^{f(b)}.\]
Let \[\bar{\Theta}_b: \tau=D_{V^\vee}/J\ra Re^{f(b)}/\hat{\gg}Re^{f(b)},\quad \delta\mapsto \delta e^{f(b)}.\]
Let \[\Theta'_b: \tau\ra \tau/\gm_b\tau\xra{\simeq} i^*_b\tau,\quad \delta\mapsto \delta+\gm_b\tau\mapsto \delta(b),\]
which is an   $\cO_{V^\vee}$-module morphism.
Then we have a diagram:
\[\xymatrix{
&\tau\ar@{=}[d]\ar[r]^{\bar{\Theta}_b\quad\quad}&Re^{f(b)}/\hat{\gg}Re^{f(b)}\\
&\tau\ar[r]^{\Theta'_b\quad}&\tau/\gm_b\tau\simeq i^*_b\tau.\ar[u]^{\simeq}_{\cO_b\otimes\Phi\equiv\Phi'}
}\]

\begin{claim}
$\bar{\Theta}_b=\Phi'\circ \Theta'_b$, i.e. the above diagram commutes.
\end{claim}

\begin{proof}
Let $\delta=\sum g_\al\pa^\al,\, \bar{\Theta}_b(\delta)=(\delta e^f)(b)=\sum g(b)_\al(\pa^\al e^f)(b).$ 
On the other hand, $\Phi'(\delta(b))=(\delta(b)e^f)(b)=\sum (g(b)_\al\pa^\al e^f)(b)=\bar{\Theta}_b(\delta).$
Thus $\bar{\Theta}_b=\Phi'\circ \Theta'_b$.
\end{proof}

Let $\pr: D_{V^\vee}\ra\tau=D_{V^\vee}/J$ be the projection.

\begin{prop}
For all $b\in V^\vee,$ $$\ker \Theta_b=\gm_bD_{V^\vee}+J.$$
(Note that $\gm_bD_{V^\vee}$ is a right ideal and $J$ is a left ideal.)
\end{prop}

\begin{proof}
By the previous claim $\Theta_b=\bar{\Theta}_b\circ pr=\Phi'\circ \Theta'_b\circ \pr.$
Since $\Phi'$ is an isomorphism,
\[\ker\Theta_b=\ker \Theta_b'\circ \pr.\]
$\ker \Theta_b'\circ \pr=\ker(D_{V^\vee}\ra \tau=D_{V^\vee}/J\ra \tau/\gm_b\tau)=\gm_bD_{V^\vee}+J.$
\end{proof}
Therefore given $\delta\in D_{V^\vee}$, then $\delta e^{f(b)}=0$ in $Re^{f(b)}/\hat{\gg}Re^{f(b)}$ iff $\Theta_b(\delta)=0$ iff $\delta\in \ker \Theta_b=\gm_bD_{V^\vee}+J$, i.e. \eqref{cond2}$\Leftrightarrow$\eqref{2cond22}. This completes the proof of Theorem \ref{equiv}.
\end{proof}

The theorem shows that for each $b\in V^\vee$, the membership condition $\delta e^{f(b)}\in\hat\fg Re^{f(b)}$ determines exactly if $b$ is a zero of the sheaf $\delta\sol(\tau)$ of analytic functions. Thus describing the vector subspace $\hat\fg Re^{f(b)}\subset R e^{f(b)}$ is crucial in understanding differential zeros of the solutions to $\tau$ in general, and of generalized hypergeometric functions in particular. In Appendix \ref{appendix A}, we give an explicit basis for $\hat\fg Re^{f(b)}$ for a number of interesting examples.

\section{Analyticity along singularity}

In this section, we shall consider the zero locus of certain sheaf of analytic functions on a complex manifold $B$.\footnote{We thank Professor M. Kashiwara for his helpful insights which provide the basis for the analytic argument in this section.}

\begin{defn}
Let $B$ be a complex manifold. A locally constant sheaf $\cS$ of finite dimensional vector spaces on $B$ is called analytic (ALCS) if it is equipped with an embedding $\cS\into \cO_B$ of sheaves. We shall identify an ALCS $\cS$ with its image in $\cO_B$ via the given embedding, and treat $\cS$ as a subsheaf of $\cO_B$.
\end{defn}

The classical solution sheaf $\sol(\tau)$ of a holonomic $D$-module $\tau$ on $B$ is an ALCS. For a given ALCS $\cS$  and for any $\delta\in D_{V^\vee}$, let $\delta \cS$ be the sheaf such that $(\delta \cS)_b=\{\delta \gs\mid \gs\in\cS_b\}$, then it is also an ALCS. An ALCS of the form $\delta \sol(\tau)$ for a tautological system $\tau$ will be our primary focus here.

\begin{defn}
Let $\overline B$ be a smooth partial compactification of $B$ such that $D=\overline B\backslash B$ is a normal crossing divisor in $\overline{B}$. We say that an ALCS $\cS$ on $B$ has regular singularity along $D$,  if for each $b_0\in D$, there exists local coordinates $z=(z_1,\ldots, z_n)$ on $\overline B$ in some polydisk $U$ centered at $b_0$ such that $U\cap D=U\cap (\bigcup^r_{i=1}\{z_i=0\})$ for some $1\leq r\leq n$ and
every $\gs\in\cS(U\backslash D)$ has the form
\begin{equation}\label{regular singular}
\gs=\sum_{\al\in\Lambda}\sum_{I\in\Theta} g_{\al,I}(z)[z]_r^{\al}[\log z]_r^{I}
\end{equation}
on $U\backslash D$, where $\Lambda$ is a finite subset of $\C^r$, $[z]_r^{\al}=z_1^{\al^1}\cdots z_r^{\al^r}$; $\Theta$ is a finite subset of $\Z_{\geq 0}^r$, $[\log z]_r^{I}=(\log z_1)^{I^1}\cdots(\log z_r)^{I^r}$, and $g_{\al,I}$ are meromorphic functions with poles along $D$.
\end{defn}

Note that if $\cS$ is the solution sheaf of a regular holonomic $D$-module with singular hypersurface being a normal crossing divisor $D$, then $\cS$ is an ALCS with regular singularity along $D$ (cf. \cite[p.862]{KK}, \cite[p.83]{SST}).

The typical situation we shall consider is when $\cS = \delta \sol(\tau)$, where $\tau$ is a regular holonomic tautological system defined on $V^\vee$ as before and $\delta\in D_{V^\vee}$. Since $B$ is a Zariski open subset of $V^\vee$, $V^\vee$ can be viewed as a smooth partial compactification of $B$.
However, it may be the case that the divisor $D=V^\vee\backslash B$ fails to be normal crossing. In that case we can remedy this by blowing up $V^\vee$ along $D$ to achieve normal crossing, which we will talk about in the next section.

Our main goal here is to show that the differential zero locus $\cN(\delta)$ of $\delta \sol(\tau)$ has an analytic closure in $V^\vee$ if $D$ is a normal crossing divisor. Then we can use the proper mapping theorem to conclude for the general case.

{\it For the rest of this section, $\cS$ is assumed to be an ALCS on $B$ with regular singularity along $D=\overline B\backslash B$.}

\subsection{Regular singularities}
For fixed $I\in\Theta$, we can combine  terms in \eqref{regular singular} with $\log$ component being $[\log z]^I_r$. Then we have a finite sum of the form $(\sum_{\al\in \Lambda} g_{\al,I}(z)[z]_r^\al)[\log z]^I_r$. Let $\al_{I1},\ldots,\al_{I \Lambda_I}$ denote all the $\al$'s that appear in this sum, and let $g_{Ik}(z):=g_{\al_{Ik},I}(z)$. Then we can rewrite \eqref{regular singular} as 
\begin{equation}\label{regular singular II}
\gs=\sum_{I\in\Theta}\big(\sum_ {k=1}^{\Lambda_I}g_{Ik}(z)[z]_r^{\al_{Ik}}\big)[\log z]_r^{I}.
\end{equation}
For fixed $I$, if there exist $k,k'$ such that $\al_{Ik}-\al_{Ik'}=n_{I}\in\Z^r$, then 
\[g_{Ik}(z)[z]_r^{\al_{Ik}}+g_{Ik'}(z)[z]_r^{\al_{Ik'}}=(g_{Ik}(z)+g_{Ik'}(z)[z]_r^{n_I})[z]_r^{\al_{Ik}}\]
and $g_{Ik}(z)+g_{Ik'}(z)[z]_r^{n_I}$ is a meromorphic function with poles along $\bigcup^r_{i=1}\{z_i=0\}$. So without loss of generality we can assume further in the expression \eqref{regular singular II} that for each $I$,
\begin{equation}\label{reduced}
\forall 1\leq k\leq \Lambda_I ,\,\re\al_{Ik}\in [0,1)^r\text{ and }\forall \,1\leq k\neq k'\leq \Lambda_I ,\, \al_{Ik}\neq\al_{Ik'}.
\end{equation}
We say that  $\gs$ is of {\it reduced form} if \eqref{reduced} holds.

\begin{prop}\label{general}
Assume $\cS$ on $B$ has regular singularity along $D$. For $b_0\in D$, let $U$ be a polydisk  centered at $b_0\in U\cap D=U\cap (\bigcup^r_{i=1}\{z_i=0\})$ such that for every $\gs\in \cS(U\backslash D)$,
\[\gs=\sum_{I\in\Theta^{(\gs)}}\big(\sum_ {k=1}^{\Lambda_I^{(\gs)}}g_{Ik}^{(\gs)}(z)[z]_r^{\al_{Ik}^{(\gs)}}\big)[\log z]_r^{I} 
\]
on $U\backslash D$ and is of reduced form.
Then $\gs(b)=0$ for all $\gs\in \cS_b$ if and only if $g_{Ik}^{(\gs)}(z(b))=0$ for all $ g_{Ik}^{(\gs)}$ on $U\backslash D$.
\end{prop}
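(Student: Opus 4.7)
Proof proposal.

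The ``if'' direction is immediate: substituting $z(b)$ into the reduced form of $\gs$ gives $0$ as soon as each coefficient $g_{Ik}^{(\gs)}(z(b))=0$. For the converse, my plan is to exploit the monodromy action of $\pi_1(U\setminus D,b)\cong\Z^r$ on the stalk $\cS_b$, which preserves $\cS_b$ because $\cS$ is locally constant: from a single $\gs\in\cS_b$ I generate the family $\{T^m\gs\}_{m\in\Z^r}\subset\cS_b$, each of which vanishes at $b$ by hypothesis, and then extract the vanishing of each $g_{Ik}^{(\gs)}(z(b))$ by a Vandermonde-type argument on the discrete lattice $\Z^r$.

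First I would compute $T^m\gs$ explicitly. Denoting by $T_1,\ldots,T_r$ the monodromies around the components $\{z_i=0\}$ and using $T_i(z_j^{\al})=e^{2\pi i\al\delta_{ij}}z_j^{\al}$ together with $T_i(\log z_j)=\log z_j+2\pi i\delta_{ij}$, one obtains
\[
(T^m\gs)(z)=\sum_{I,k}g_{Ik}^{(\gs)}(z)\,e^{2\pi i\,m\cdot\al_{Ik}^{(\gs)}}\,[z]_r^{\al_{Ik}^{(\gs)}}\prod_{j=1}^{r}\bigl(\log z_j+2\pi i\,m_j\bigr)^{I^j}.
\]
Setting $\zeta=z(b)$ and $\lambda_j=\log z_j(b)$ for the fixed branch at $b$, the hypothesis $(T^m\gs)(b)=0$ then yields, for every $m\in\Z^r$, the linear relation
\[
\sum_{I,k}c_{Ik}\,e^{2\pi i\,m\cdot\al_{Ik}^{(\gs)}}\prod_{j}(\lambda_j+2\pi i\,m_j)^{I^j}=0,\qquad c_{Ik}:=g_{Ik}^{(\gs)}(\zeta)\,\zeta^{\al_{Ik}^{(\gs)}},
\]
regarded as a family of linear equations in the unknowns $c_{Ik}$ indexed by $m$.

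The crux, and the main obstacle I expect, is to verify the $\C$-linear independence on $\Z^r$ of the quasi-polynomial functions $m\mapsto e^{2\pi i\,m\cdot\al_{Ik}^{(\gs)}}\prod_j(\lambda_j+2\pi i\,m_j)^{I^j}$ over the index set of the reduced expansion. The normalization $\re\al\in[0,1)^r$ from the reduced form is essential here: any two $\al,\al'$ satisfying it with $\al-\al'\in\Z^r$ must coincide, so distinct $\al$'s give distinct characters $m\mapsto e^{2\pi i\,m\cdot\al}$ of $\Z^r$; and for fixed $\al$ the polynomials $\prod_j(\lambda_j+2\pi i\,m_j)^{I^j}$ indexed by $I$ have pairwise distinct multi-degrees in $m$ and are therefore linearly independent. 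Artin's theorem on the linear independence of characters of $\Z^r$, combined with this polynomial independence, will force all $c_{Ik}=0$. Since $\zeta_j\neq 0$ for $j=1,\ldots,r$ (as $b\notin D$) makes $\zeta^{\al_{Ik}^{(\gs)}}\neq 0$, this gives $g_{Ik}^{(\gs)}(z(b))=0$ for every $(I,k)$, completing the argument.
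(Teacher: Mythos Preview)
Your approach is correct and genuinely different from the paper's. The paper argues by induction on $r$: for $r=1$ it uses an asymptotic trick---ordering the exponents $\alpha$ by imaginary part, dividing through by the dominant exponential, and sending $m\to\infty$ so that the subdominant terms die---reducing to a lemma that $\lim_{m\to\infty}\sum_l e^{2\pi i m\alpha_l}a_l=0$ (with real $\alpha_l$ pairwise incongruent mod~$\Z$) forces all $a_l=0$; the case $r\geq 2$ is then handled one variable at a time by iterating the $r=1$ argument. You instead work with the full $\Z^r$-lattice of monodromy translates at once and invoke an algebraic independence statement for exponential--polynomials. Your route is more uniform in $r$ and avoids any limiting argument; the paper's route is more elementary in that it never needs to quote or prove the general exponential--polynomial lemma.

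One point to tighten: Artin's (Dedekind's) theorem alone---that distinct characters of $\Z^r$ into $\C^\times$ are $\C$-linearly independent---does not directly give independence of the functions $m\mapsto\chi_\alpha(m)\prod_j(\lambda_j+2\pi i m_j)^{I^j}$, since the polynomial factors are non-constant. What you actually need is the standard extension: if $\sum_i\chi_i(m)P_i(m)=0$ on $\Z^r$ with distinct characters $\chi_i$ and polynomials $P_i$, then each $P_i\equiv 0$. This is proved, for instance, by applying difference operators $f\mapsto f(\cdot+e_j)-\chi_s(e_j)f$ to reduce $\sum_i\deg P_i$ and inducting down to the constant case. Once you group your sum by the value of $\alpha$ (the reduced-form condition guarantees that for each fixed $\alpha$ at most one $k$ contributes for each $I$), this lemma kills the polynomial coefficient of each character, and then your distinct-multidegree observation finishes the job. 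Spell this out and your proof is complete.
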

We are going to prove this proposition for $r=1$ and $r=2$. Then by a straightforward induction the proposition holds for general cases.

\subsection{Case $r=1$}
Consider  $\gs\in \cS(U\backslash D)$,
\begin{equation}\label{expression}
\gs=\sum_{j=0}^d (\sum_{k=1}^{\Lambda_j} g_{jk}(z)z_1^{\al_{jk}})(\log z_1)^j
\end{equation}
where $g_{jk}(z)$ are meromorphic functions with poles along $\{z_1=0\}$.
Then $\gs$ is of reduced form if it satisfies further that
\begin{equation}\label{assump}
\re\al_{jk}\in [0,1)\text{ and when }k\neq k',\, \al_{jk}\neq\al_{jk'}.
\end{equation}

Suppose for some $b\in U\backslash D$, $\gs(b)=0$ for all $\gs\in \cS_b$. Then the zero locus is monodromy invariant. Let $z(b)$ denote the coordinate of $b$ in $U$, then $z_1(b)\neq 0$. Fix $z_i=z_i(b)$ for $2\leq i\leq n$ in $\gs$, the analytic continuation of $\gs$ around $z_1=0$ also vanishes at $b$.
Let $\log z_1(b)=w+2\pi im,\,m\in\Z$ for some $w\in\C$,  then
\[0=\gs(m)=\sum_{j=0}^d (\sum_{k=1}^{\Lambda_j}  c_{jk}e^{2\pi im\al_{jk}})(w+2\pi im)^j,\quad\forall m\in \Z\] 
where $c_{jk}=g_{jk}(z(b))e^{\al_{jk}w}\in\C.$

\begin{claim}
$c_{jk}=0$ for all $0\leq j\leq d,1\leq k\leq \Lambda_j$.
\end{claim}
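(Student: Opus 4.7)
The plan is to convert the hypothesis $\gs(m)=0$ for all $m\in\Z$ into an identity of exponential polynomials in the integer variable $m$, and then invoke the classical linear independence of such exponentials over $\Z$.

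First, I would set $\lambda_{jk}:=e^{2\pi i\al_{jk}}$. The normalization $\re\al_{jk}\in[0,1)$ makes $\al\mapsto e^{2\pi i\al}$ injective on this strip: if $e^{2\pi i\al}=e^{2\pi i\al'}$ with both real parts in $[0,1)$, then $\al-\al'\in\Z$, so $\re(\al-\al')\in(-1,1)\cap\Z=\{0\}$, and equality of imaginary parts then forces $\al=\al'$. Combined with the distinctness assumption \eqref{assump}, this shows that for each fixed $j$ the values $\lambda_{j,1},\ldots,\lambda_{j,\Lambda_j}$ are pairwise distinct. Enumerate all distinct values occurring among the $\lambda_{jk}$ (with collisions allowed across different $j$) as $\mu_1,\ldots,\mu_N$, and define
\[P_s(m):=\sum_{(j,k):\,\lambda_{jk}=\mu_s}c_{jk}(w+2\pi im)^j\in\C[m],\]
so that the hypothesis becomes $\sum_{s=1}^N P_s(m)\,\mu_s^m=0$ for all $m\in\Z$.

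Next, I would show each $P_s\equiv 0$ via the shift operator $T:f(m)\mapsto f(m+1)$ on $\C^\Z$. For $\lambda\neq\mu$, $(T-\mu I)$ sends $P(m)\lambda^m$ to $\lambda^m\bigl(\lambda P(m+1)-\mu P(m)\bigr)$, a term of the same form whose polynomial factor retains the same degree as $P$ (its leading coefficient is multiplied by $\lambda-\mu\neq 0$). Setting $D:=\max_s\deg P_s$ and applying $\prod_{s\neq s_0}(T-\mu_s I)^{D+1}$ to the identity annihilates every term indexed by $s\neq s_0$ and reduces it to $\mu_{s_0}^m R_{s_0}(m)=0$ for all $m\in\Z$, where $R_{s_0}\in\C[m]$ has the same degree as $P_{s_0}$ and leading coefficient equal to that of $P_{s_0}$ times $\prod_{s\neq s_0}(\mu_{s_0}-\mu_s)^{D+1}\neq 0$. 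Since $\mu_{s_0}\neq 0$ and a nonzero polynomial cannot vanish on all of $\Z$, we get $R_{s_0}\equiv 0$, hence $P_{s_0}\equiv 0$. Ranging $s_0$ over $\{1,\ldots,N\}$ kills every $P_s$.

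Finally, within a fixed $P_s$, the first paragraph guarantees that the contributing pairs $(j,k)$ have pairwise distinct $j$-values, so $P_s$ is a linear combination of the polynomials $(w+2\pi im)^j$ at distinct exponents $j\in\{0,\ldots,d\}$. These are linearly independent in $\C[m]$ by degree, so $P_s\equiv 0$ forces every $c_{jk}$ contributing to $P_s$ to vanish; ranging over $s$ recovers $c_{jk}=0$ for all $(j,k)$. The main obstacle is the middle step: one must verify carefully that each shift-operator application preserves the degree of the surviving polynomial factor. This rests on the pairwise distinctness and nonvanishing of the $\mu_s$, both of which are automatic — the former by construction, the latter because $\mu_s=e^{2\pi i\al_{jk}}\neq 0$.
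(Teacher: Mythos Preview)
Your argument is correct and takes a genuinely different route from the paper's. The paper first stratifies the $\alpha_l$'s by imaginary part: dividing by $e^{2\pi i m(i\beta)}$ with $\beta=\min_l\im\alpha_l$ and sending $m\to\infty$ kills the terms with larger imaginary part by exponential decay, leaving a limit identity among terms with $\im\alpha_l=\beta$. It then peels off the polynomial degrees one at a time and invokes a separate induction (Lemma~\ref{constant}) for the purely oscillatory case $\alpha_l\in\R$, itself proved by the shift $m\mapsto m+1$. Your approach bypasses both the asymptotic step and the auxiliary lemma by working purely algebraically with the commuting shift operators $(T-\mu_s I)$: since the $\mu_s$ are distinct and nonzero, the operator $\prod_{s\neq s_0}(T-\mu_sI)^{D+1}$ annihilates every term but the $s_0$-th while preserving the degree and (up to a nonzero scalar) the leading coefficient of $P_{s_0}$, forcing $P_{s_0}\equiv 0$. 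This is cleaner and treats complex $\alpha$'s uniformly without separating real and imaginary parts; the paper's approach, by contrast, is closer in spirit to classical asymptotic analysis of solutions near a regular singular point and makes the role of monodromy ($m\to\infty$) more visible.
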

\begin{proof}
Let $\{\al_1,\ldots,\al_s\}:=\{\al_{jk}\}_{j,k}$ where $\al_1,\ldots,\al_s$ are pairwise distinct. Then we can write 
\[\gs(m)=\sum_{l=1}^s e^{2\pi im\al_{l}}(\sum_{\{j,k\mid\al_{jk}=\al_l\}} c_{jk}(w+2\pi im)^j).\]
Let $P'_l(m):=\sum_{\{j,k\mid\al_{jk}=\al_l\}}  c_{jk}(w+2\pi im)^j$. Since \eqref{assump} holds, the $j$'s appearing in the summands are pairwise distinct. We have
\begin{equation}\label{sum 1}
0=\gs(m)=\sum_{l=1}^s e^{2\pi im\al_{l}}P'_l(m),\quad\forall m\in \Z.
\end{equation}

Let $\beta:=\min_{1\leq l\leq s}\{\im \al_{l}\}$ and let $p$ be the number of $\al_l$'s that reaches this minimum. Without loss of generality we can assume $\im \al_{1}=\cdots=\im \al_{p}=\beta.$
 Consider
{\small\begin{align*}
0=&\left(\sum_{l=1}^s e^{2\pi im\al_{l}}P_{l}'(m)\right)/e^{2\pi im(i\beta)}\\
=&e^{2\pi im\re\al_{1}}P'_{1}(m)+\cdots+e^{2\pi im\re\al_{p}}P'_{p}(m)+\sum_{l=p+1}^s e^{2\pi m(\beta-\im\al_{l})}e^{2\pi im\re\al_{l}}P_{l}'(m).
\end{align*}}
Since $\beta-\im\al_{l}<0$ for $l>p$, let $m\ra\infty$,
\[\lim_{m\ra\infty} |e^{2\pi m(\beta-\im\al_{l})}e^{2\pi im\re\al_{l}}P_{l}'(m)|=\lim_{m\ra\infty} |e^{2\pi m(\beta-\im\al_{l})}P_{l}'(m)|=0\,\text{ for }l>p.\]
Thus 
\begin{equation}\label{limsum}
\lim_{m\ra\infty} e^{2\pi im\re\al_{1}}P'_{1}(m)+\cdots+e^{2\pi im\re\al_{p}}P'_{p}(m)=0.
\end{equation}
We have
\begin{align*}
\sum_{l=1}^p e^{2\pi im\re\al_{l}}P'_{l}(m)&=\sum_{l=1}^p e^{2\pi im\re\al_{l}}\big(\sum_{\{j,k\mid\al_{jk}=\al_l\}} c_{jk}(w+2\pi im)^j\big)\\
&=\sum_{j=0}^d (w+2\pi im)^j\big(\sum_{l=1}^p  \sum _{\{1\leq k\leq \Lambda_j\mid\al_{jk}=\al_l\}}c_{jk} e^{2\pi im\re\al_{jk}}   \big).
\end{align*}

Since for every $0\leq j\leq d$, $\sum_{l=1}^p  \sum _{\{k\mid\al_{jk}=\al_l\}}c_{jk} e^{2\pi im\re\al_{jk}}   $ is bounded for all $m$, then \eqref{limsum} implies
\begin{equation}\label{sum 2}
\lim_{m\ra\infty} \sum_{l=1}^p  \sum _{\{1\leq k\leq \Lambda_j\mid\al_{jk}=\al_l\}}c_{jk} e^{2\pi im\re\al_{jk}} =0 \text{ for } 0\leq j\leq d.
\end{equation}
Note that \eqref{assump} implies that for fixed $j$ and $l$, there is at most one $k$ such that $\al_{jk}=\al_l$. Thus for fixed $j$, $\al_{jk}$ appearing in \eqref{sum 2} are pairwise distinct. 
By our assumption their imaginary parts all equal $\beta$, then $\re \al_{jk}\in[0,1)$ and are pairwise distinct in the summands of \eqref{sum 2}.

\begin{lem}\label{constant}
Given $\al_l\in\R,\,a_l\in\C,\, 1\leq l\leq p$. If $\al_i-\al_j\notin\Z$ when $ i\neq j$, then
\[\lim_{m\ra\infty} e^{2\pi im\al_{1}}a_1+\cdots+e^{2\pi im\al_{p}}a_p=0\]
implies that $a_l=0$ for all $1\leq l\leq p.$
\end{lem}

\begin{proof}
When $p=1$, we have
\[\lim_{m\ra\infty} e^{2\pi im\al_{1}}a_1=0.\]
Then
\[\lim_{m\ra\infty} |a_1|=0\]
and thus $a_1=0.$
Assume that lemma holds for $p=n$. Now we consider
\begin{equation}\label{induction}
\lim_{m\ra\infty} e^{2\pi im\al_{1}}a_1+\cdots+e^{2\pi im\al_{n+1}}a_{n+1}=0.
\end{equation}
The difference of replacing $m$ by $m+1$ in \eqref{induction} and multiplying \eqref{induction} by $e^{2\pi i\al_{n+1}}$ becomes
\[\lim_{m\ra\infty} e^{2\pi im\al_{1}}( e^{2\pi i\al_{1}}-e^{2\pi i\al_{n+1}})a_1+\cdots+e^{2\pi im\al_{n}}(e^{2\pi i\al_{n}}-e^{2\pi i\al_{n+1}})a_{n}=0.
\]
Then by our inductive hypothesis we can conclude that 
\[( e^{2\pi i\al_{l}}-e^{2\pi i\al_{n+1}})a_l=0\] for $1\leq l\leq n.$ Since by our assumption $ e^{2\pi i\al_{l}}-e^{2\pi i\al_{n+1}}\neq 0$  for $1\leq l\leq n,$ then $a_1=\cdots=a_n=0.$ Thus
\[\lim_{m\ra\infty} e^{2\pi im\al_{n+1}}a_{n+1}=0\] and therefore $a_{n+1}=0$.
By induction the lemma holds for all $p$.
\end{proof}

Hence by Lemma \ref{constant} we can conclude that $c_{jk}=0$ for all $\{j,k\mid\al_{jk}=\al_l,l=1,\ldots,p\}.$ 

Now our original summation \eqref{sum 1} reduces to 
$$\sum_{l=p+1}^s e^{2\pi im\al_{l}}P_{l}'(m)=0.$$
We can repeat our strategy of considering terms that reach minimum imaginary part in this sum, then eventually we have $c_{jk}=0$ for all $j,k$.
\end{proof}
Since $c_{jk}=g_{jk}(z(b))e^{\al_{jk}w}$,  it implies $g_{jk}(z(b))=0$ for all $j,k$.

We just showed that  if $\gs(b)=0$ for all $\gs\in \cS_b$, then $g_{jk}^{(\gs)}(z(b))=0$ for all $g_{jk}^{(\gs)}.$
On the other hand, it is clear that if $g_{jk}^{(\gs)}(z(b))=0$, then $\gs(b)=0$.
Therefore Proposition \ref{general} holds if $r=1$.

\subsection{Case $r=2$}
Consider $\gs\in \cS(U\backslash D)$,
\[\gs=\sum_{i,j}(\sum_k g_{ijk}(z)z_1^{\al_{ijk}}z_2^{\be_{ijk}})(\log z_1)^i(\log z_2)^j\]
where $g_{ijk}(z)$ are meromorphic functions with poles along $\{z_1=0\}\cup\{z_2=0\}$. 
Then $\gs$ is of reduced form if
\begin{equation}\label{reduced two}
\re\al_{ijk}\in [0,1),\,\re{\be_{ijk}}\in [0,1);\;\text{when }k\neq k',\text{ either }\al_{ijk}\neq \al_{ijk'}\text{ or }\be_{ijk}\neq \be_{ijk'}.
\end{equation}
For each $i$, let $\{\al_{ijk}\}_{j,k}=\{\al_{i1},\ldots,\al_{i{s_i}}\}$ where $\al_{i1},\ldots,\al_{is_i}$ are pairwise distinct.
We can rewrite $\gs$ as
\[\gs=\sum_{i}(\log z_1)^{i}\bigg(\sum_{l_i=1}^{s_i} z_1^{\al_{il_i}}(\sum_{\{j,k\mid\al_{ijk}=\al_{il_i}\}} g_{ijk}(z)z_2^{\be_{ijk}}(\log z_2)^j)\bigg).\]

Suppose for some $b\in U\backslash D$, $\gs(b)=0$ for all $\gs\in\cS_b$. Then $z_1(b)z_2(b)\neq 0$. First we fix $z_i=z_i(b)$ for $2\leq i\leq n$ and consider the analytic continuation around $z_1=0$.
Then 
{\small\[\gs=\sum_{i}(\log z_1)^{i}\bigg(\sum_{l_i=1}^{s_i} z_1^{\al_{il_i}}(\sum_{\{j,k\mid\al_{ijk}=\al_{il_i}\}} g_{ijk}(z_1,z_2(b),\ldots, z_n(b))z_2(b)^{\be_{ijk}}(\log z_2(b))^j)\bigg).\]}
Let $\gs_{i,l_i}(z):=\sum_{\{j,k\mid\al_{ijk}=\al_{il_i}\}} g_{ijk}(z)z_2^{\be_{ijk}}(\log z_2)^j$,
then 
\begin{equation}\label{two variable}
\gs=\sum_{i}(\log z_1)^{i}\big(\sum_{l_i=1}^{s_i} z_1^{\al_{il_i}}\gs_{i,l_i}(z_1,z_2(b),\ldots, z_n(b))\big)
\end{equation}
and $\gs_{i,l_i}(z_1,z_2(b),\ldots, z_n(b))$ is a meromorphic function in $z_1$ with poles along $\{z_1=0\}$.
Then \eqref{two variable} satisfies \eqref{assump} and by case $r=1$ of Proposition \ref{general} we have
\[\gs_{i,l_i}(z(b))=\sum_{\{j,k\mid\al_{ijk}=\al_{il_i}\}} g_{ijk}(z(b))z_2(b)^{\be_{ijk}}(\log z_2(b))^j=0.\]
for all $i,l_i$.

Fix $i,l_i$. Note that if $k\neq k'$ and $\al_{ijk}=\al_{ijk'}=\al_{il_i}$, \eqref{reduced two} implies $\be_{ijk}\neq \be_{ijk'}$. Now in $\gs_{i,l_i}$ we fix $z_i=z_i(b)$ for $i\neq 2,1\leq i\leq n$  and do analytic continuation around $z_2=0$, then by case $r=1$ of Proposition \ref{general} again $\gs_{i,l_i}(z(b))=0$ implies $g_{ijk}(z(b))=0$ for all $j,k$ such that $\al_{ijk}=\al_{il_i}$. 

Hence if $\gs(b)=0$ for all $\gs\in \cS_b$, then $g_{ijk}^{(\gs)}(z(b))=0$ for all $g_{ijk}^{(\gs)}.$
Therefore Proposition \ref{general} holds for $r=2$.

\subsection{Analyticity of the zero locus}

Let $\cN:=\{b\in B\mid \gs(b)=0,\,\forall\gs\in\cS_b\}.$ Let  $\overline{\cN}$ denote its analytic closure in $\overline B$.
\begin{prop}\label{closure}
If an ALCS $\cS$ on $B$ has regular singularity along $D$, then $\overline{\cN}$ is analytic.
\end{prop}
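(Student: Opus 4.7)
The plan is to establish analyticity of $\overline{\cN}$ locally at every point of $\overline{B}$. At a point $b\in B$, this is straightforward: on a simply connected neighborhood $W\subset B$, the ALCS $\cS$ admits a basis $\gs_1,\ldots,\gs_N$ of holomorphic functions, and $\cN\cap W$ is the common zero set $\bigcap_i\{\gs_i=0\}$, which is analytic. The substantive case is a point $b_0\in D$, which the plan addresses next.

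Fix a polydisk $U\ni b_0$ with local coordinates $(z_1,\ldots,z_n)$ as in the definition of regular singularity, so that $U\cap D=\bigcup_{i=1}^r\{z_i=0\}$. Choose a simply connected sector $W\subset U\setminus D$ and a basis $\gs_1,\ldots,\gs_N$ of $\cS(W)$. Each $\gs_k$ has a reduced-form expansion with meromorphic coefficient functions $g^{(k)}_{\alpha,I}(z)$ on $U$ whose poles lie along $D$; there are only finitely many such coefficients, since the expansion is a finite sum. By Proposition \ref{general}, a point $b\in W$ lies in $\cN$ if and only if $g^{(k)}_{\alpha,I}(z(b))=0$ for all $k,\alpha,I$. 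Writing each $g^{(k)}_{\alpha,I}=f^{(k)}_{\alpha,I}/h$ with $h$ a monomial in $z_1,\ldots,z_r$ and $f^{(k)}_{\alpha,I}$ holomorphic on $U$, I obtain an analytic set $A\subset U$, namely the common zero locus of the $f^{(k)}_{\alpha,I}$, with $\cN\cap W=A\cap W$.

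To promote this to the statement $\cN\cap(U\setminus D)=A\setminus D$, I would invoke the monodromy invariance of $\cN$. Analytic continuation of a basis element $\gs_k$ around a small loop about $\{z_i=0\}$ transforms its reduced-form coefficients into $\C$-linear combinations of the original $g^{(k)}_{\alpha,I}$: the phase change $z_i^{\alpha^i}\mapsto e^{2\pi i\alpha^i}z_i^{\alpha^i}$ together with the binomial expansion of $(\log z_i+2\pi i)^{I^i}$ produces an invertible linear reshuffling of the coefficient set, preserving the exponent data $(\alpha,I)$ (up to the normalization $\re\alpha\in[0,1)^r$). Consequently, the common zero set of the continued coefficients cuts out the same analytic set $A$ in every sector of $U\setminus D$, and $\cN\cap(U\setminus D)=A\setminus D$.

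Finally, after possibly shrinking $U$ so that $A$ has only finitely many irreducible components, the closure $\overline{A\setminus D}$ in $U$ is the union of those components of $A$ not contained in $D$ (each such component is irreducible, $A_i\cap D$ is a proper analytic subset, so $A_i\setminus D$ is Zariski dense in $A_i$). This union is analytic in $U$ and coincides with $\overline{\cN}\cap U$. Patching over a cover of $\overline{B}$ yields analyticity of $\overline{\cN}$. The main obstacle I anticipate is making the monodromy-invariance step fully rigorous, so that the analytic set $A$ is genuinely independent of the chosen sector and basis; once this is settled, the closure argument is elementary.
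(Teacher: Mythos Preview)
Your argument is correct and follows the same route as the paper: invoke Proposition~\ref{general} to describe $\cN\cap(U\setminus D)$ as the common zero locus of the single-valued meromorphic coefficients $g^{(k)}_{\alpha,I}$, clear poles to get holomorphic equations on $U$, and read off analyticity of the closure.

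The monodromy step you flag as the main obstacle is already fine as you sketch it, and in fact the paper's proof simply sidesteps it by quantifying over all $\gs\in\cS_b$ rather than over a chosen basis---this tautologically sweeps in every monodromy translate, so no separate argument is written out. Your observation that continuation around $\{z_i=0\}$ acts on the finite family $\{g^{(k)}_{\alpha,I}\}_k$ by an invertible constant linear map (diagonal phase $e^{2\pi i\alpha^i}$ in $\alpha$, unipotent reshuffling in the $\log$-index $I$ via the binomial expansion) is exactly the honest justification behind that shortcut. Your handling of the closure---discarding irreducible components of $A$ contained in $D$---is actually a small refinement over the paper, whose displayed formula $\overline\cN\cap U=\{[z]^{\chi}g=0\}$ can over-count by precisely such components, though this does not affect the conclusion that $\overline\cN$ is analytic.
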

\begin{proof}
$\gs$ is locally holomorphic away from $D$, thus $\cN$ is an analytic subvariety of $B$. In particular, $\cN$ is a closed subset of $B$.

Let $b_0\in D\cap\overline{\cN}$. Then by Proposition \ref{general} there exists a polydisk $U$ centered at $b_0$ such that  
$$\cN\cap (U\backslash D)=\{b\in B\mid g^{(\gs)}_{I k}(z(b))=0,\,\forall \gs\in\cS_b,\,\forall I,k\}\cap(U\backslash D)$$
where $g_{Ik}$ are meromorphic functions with poles along $\bigcup^r_{i=1}\{z_i=0\}$. Let $\chi_{Ik}^i\in \Z^r$ be the order of poles of $g_{Ik}(z)$ corresponding to $z_i$ respectively. Then $[z]_r^{\chi_{Ik}}g_{Ik}(z)$ is holomorphic on the neighborhood $U$. Then 
$$\overline{\cN}\cap U=\{b\in \overline B\mid [z(b)]_r^{\chi^{(\gs)}_{Ik}}g^{(\gs)}_{I k}(z(b))=0,\,\forall \gs\in\cS_b,\,\forall I,k\}\cap U,$$
i.e. $\overline{\cN}$ is analytic.
\end{proof}

\section{Algebraicity of $\cN(\delta)$}
As before, let $\tau$ be a regular holonomic tautological system on $V^\vee$, $B$ be a Zariski dense open subset of $V^\vee$, and $D=V^\vee\backslash B$.

By Hironaka's Theorem \cite{Hi} there exists a proper analytic morphism (blow-up) $f$:
\[\xymatrix{
&\tilde{V}^\vee\ar[r]^f &V^\vee\\
&\tilde B=\tilde{V}^\vee\backslash \tilde{D}\ar[u]^{\cup}\ar[r]^{\simeq}&B=V^\vee\backslash D\ar[u]_{\cup}
}\]
such that $\tilde{D}:={f}^{-1}(D)$ is a normal crossing divisor in $\tilde{V}^\vee$. We can then consider the $D$-module $\tilde\tau=f^*\tau$ on $\tilde{V}^\vee$ and its solution sheaf. Since $\tau$ is regular holonomic, $\tilde \tau$ is also regular holonomic.
Note that $f|_{\tilde{B}}$ induces an isomorphism from $\delta \sol(\tilde\tau)$ on $\tilde B$ to $\delta \sol(\tau)$ on $B$. Let $\tilde\cN(\delta):=\{b\in \tilde B\mid \tilde\gs(b)=0,\, \forall \tilde\gs\in \delta \sol(\tilde\tau)_b\}$.

{\claim{}{
The closure in analytic topology $\overline{\tilde\cN(\delta)}$ is analytic in $\tilde{V}^\vee$.
}}
\proof
Since $\tilde D$ is a normal crossing divisor and $\tilde\tau$ is regular holonomic, 
$\sol(\tilde\tau)$ has regular singularity along $\tilde D$. Then it is clear that $\delta \sol(\tilde\tau)$ also has regular singularity along $\tilde D$. Then by Proposition \ref{closure}, $\overline{\tilde\cN(\delta)}$ is analytic in $\tilde{V}^\vee$.
\qed

\begin{prop}\label{analy}
The closure in analytic topology $\overline{\cN(\delta)}$ is analytic in $V^\vee$.
\end{prop}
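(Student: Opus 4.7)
The plan is to push the analyticity from $\tilde V^\vee$ down to $V^\vee$ using the properness of the blow-up $f$ together with Remmert's proper mapping theorem. All the hard analytic work has already been done in the preceding claim, which gives that $\overline{\tilde\cN(\delta)}$ is analytic in $\tilde V^\vee$; the remaining task is essentially a closure-chasing argument.

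First, I would observe that the isomorphism $f|_{\tilde B}\colon \tilde B\xra{\simeq} B$ identifies the locally constant sheaves $\delta\sol(\tilde\tau)$ and $\delta\sol(\tau)$, so it sends $\tilde\cN(\delta)$ bijectively onto $\cN(\delta)$. In particular, $f(\tilde\cN(\delta))=\cN(\delta)$. Next, since $f$ is proper (hence closed) and continuous, the image $f(\overline{\tilde\cN(\delta)})$ is a closed subset of $V^\vee$ containing $f(\tilde\cN(\delta))=\cN(\delta)$, so
\[
\overline{\cN(\delta)}\subseteq f(\overline{\tilde\cN(\delta)}).
\]
For the reverse inclusion, take any $y=f(x)$ with $x\in \overline{\tilde\cN(\delta)}$ and choose a sequence $x_n\in\tilde\cN(\delta)$ with $x_n\to x$; then $f(x_n)\in\cN(\delta)$ and $f(x_n)\to y$ by continuity, so $y\in\overline{\cN(\delta)}$. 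Hence
\[
f(\overline{\tilde\cN(\delta)})=\overline{\cN(\delta)}.
\]

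Finally, I would invoke Remmert's proper mapping theorem: since $f$ is a proper holomorphic map and $\overline{\tilde\cN(\delta)}$ is an analytic subset of $\tilde V^\vee$, its image $f(\overline{\tilde\cN(\delta)})$ is an analytic subset of $V^\vee$. Combined with the identification above, this yields that $\overline{\cN(\delta)}$ is analytic in $V^\vee$, completing the proof.

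The only subtle point is confirming the set-theoretic equality $f(\overline{\tilde\cN(\delta)})=\overline{\cN(\delta)}$, but this follows from the standard fact that a proper continuous map is closed, together with the sequence argument above; no additional structure of $\tau$ is required here. Thus there is no genuine obstacle beyond bookkeeping, and the proposition reduces cleanly to the preceding claim plus Remmert's theorem.
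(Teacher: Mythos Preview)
Your proof is correct and follows essentially the same approach as the paper: both reduce to Remmert's proper mapping theorem applied to $f|_{\overline{\tilde\cN(\delta)}}$, together with the set-theoretic identity $f(\overline{\tilde\cN(\delta)})=\overline{\cN(\delta)}$. The only cosmetic difference is that you invoke ``proper $\Rightarrow$ closed'' directly to get one inclusion, whereas the paper reproves this fact by hand via a compactness/subsequence argument; correspondingly your two inclusions are argued in the opposite order from the paper's, but the content is the same.
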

\begin{proof} First we claim two properties. \\
{\it{$f|_{\overline{\tilde\cN(\delta)}}$} is proper:}
Given a compact subset $C\subset V^\vee$, 
$$(f|_{\overline{\tilde\cN(\delta)}})^{-1}(C)={\overline{\tilde\cN(\delta)}}\cap f^{-1}(C).$$
Since $f$ is proper, $f^{-1}(C)$ is compact. Since ${\overline{\tilde\cN(\delta)}}$ is closed, ${\overline{\tilde\cN(\delta)}}\cap f^{-1}(C)$ is compact in ${\overline{\tilde\cN(\delta)}}$.\\
{\it{$f|_{\overline{\tilde\cN(\delta)}}$} is holomorphic:}
The restriction of a holomorphic map to an analytic space is holomorphic. 

Then by Proper Mapping Theorem (cf. \cite[p.162]{GR}) $f({\overline{\tilde\cN(\delta)}})$ is analytic.

Since $f$ is continuous, $f({\overline{\tilde\cN(\delta)}})\subset {\overline{f(\tilde\cN(\delta))}}$. On the other hand, given any sequence $\tilde x_k\in \tilde\cN(\delta)$ such that $\lim_{k\ra \infty}f(\tilde x_k)=y\in D$. We can take a compact neighborhood $C\subset V^\vee$ of $y$. Then for $k>>0$, $f(\tilde x_k)\in C$, i.e. $\tilde x_k\in f^{-1}(C)\subset \tilde V^\vee$. Since $f$ is proper, $f^{-1}(C)$ is compact. Thus there exists a convergent subsequence $x_{k'}$ such that $\lim_{k'\ra\infty} x_{k'}$ exists. Therefore by continuity of $f$ we have 
$$f(\lim_{k'\ra\infty} x_{k'})=\lim_{k'\ra\infty} f(x_{k'})=y$$
which means  $y\in f({\overline{\tilde\cN(\delta)}})$.
Thus
$$f({\overline{\tilde\cN(\delta)}})= {\overline{f(\tilde\cN(\delta))}}=\overline{\cN(\delta)}$$
and therefore $\overline{\cN(\delta)}$ is analytic.
\end{proof}

\begin{prop}
If $\delta\in D_{V^\vee}$ is homogeneous under scaling by $\C^\times$, $\overline{\cN(\delta)}$ is algebraic.
\end{prop}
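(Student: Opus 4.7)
The strategy is to upgrade the analyticity of $\overline{\cN(\delta)}$ from Proposition 4.2 to algebraicity by exhibiting it as a closed analytic cone in $V^\vee$ and invoking Chow's theorem on $\P V^\vee$. The crucial input is the Euler-type relation already contained in $\tau$: since $Z(e)\in\End V$ is the identity, the operator $Z(e)\in D_{V^\vee}$ is the Euler vector field $E=\sum_i a_i\partial_{a_i}$, and $\beta_0(e)=1$, so the relation $(Z(e)+\beta_0(e))\cdot 1=0$ inside $\tau$ forces every $\gs\in\sol(\tau)$ to satisfy $E\gs=-\gs$.

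From this Euler equation, each germ $\gs\in\sol(\tau)_b$ is homogeneous of weight $-1$ along the orbit $\C^\times\cdot b$: integrating the ODE along the integral curve $s\mapsto e^sb$ of $E$ yields $\gs(e^sb)=e^{-s}\gs(b)$. Writing $\delta$ as a $\C^\times$-eigenvector of integer weight $d$ under the natural grading on $D_{V^\vee}$ ($\deg a_i=1$, $\deg\partial_i=-1$), one checks that $\delta$ shifts the Euler-weight of a function by $d$; hence $\delta\gs$ has weight $d-1$, giving $(\delta\gs)(tb)=t^{d-1}(\delta\gs)(b)$ for every branch. In particular, $(\delta\gs)(b)=0$ if and only if $(\delta\gs)(tb)=0$, so $\cN(\delta)$ is $\C^\times$-invariant inside $B$. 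Since $B$ itself is $\C^\times$-invariant (scaling a smooth section $b$ by $t\neq 0$ preserves smoothness of the associated hypersurface $Y_b$), passing to analytic closure shows that $\overline{\cN(\delta)}\subset V^\vee$ is a closed analytic cone.

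The concluding step is Chow's theorem. The image of $\overline{\cN(\delta)}\setminus\{0\}$ under the quotient $V^\vee\setminus\{0\}\onto\P V^\vee$ is a closed analytic subset of the projective variety $\P V^\vee$, hence algebraic by Chow. Its affine cone is a Zariski-closed subset of $V^\vee\setminus\{0\}$, and adjoining the origin (which is algebraic) yields precisely $\overline{\cN(\delta)}$; therefore $\overline{\cN(\delta)}$ is algebraic in $V^\vee$. The only mildly delicate step is the homogeneity transfer through multi-valued analytic continuation of $\gs$, but since we need only integrate the Euler equation along a single integral curve within each $\C^\times$-orbit to detect vanishing, and since $\sol(\tau)$ is locally constant on the $\C^\times$-invariant base $B$, this poses no serious obstacle.
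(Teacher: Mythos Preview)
Your proof is correct and follows essentially the same route as the paper: invoke Proposition~4.2 for analyticity of $\overline{\cN(\delta)}$, establish $\C^\times$-invariance of $\cN(\delta)$ via the Euler relation $(Z(e)+\beta_0(e))\cdot 1=0$ in $\tau$ (which gives $(\delta\gs)(\lambda b)=\lambda^{d-1}(\delta\gs)(b)$, matching the paper's $\lambda^{d-\beta(e)}$), then project to $\P V^\vee$ and apply Chow's theorem. Your write-up is in fact somewhat more explicit than the paper's in justifying the homogeneity of $\delta\gs$ and the $\C^\times$-invariance of $B$, but the argument is the same.
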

\begin{proof} By Proposition \ref{analy}, $\overline{\cN(\delta)}\subset V^\vee=\C^n$ is closed analytic.
Suppose $\delta$ is homogeneous of degree $d$ under scaling by $\C^\times$. Given $\lambda\in\C^\times,$ for $\gs\in \sol(\tau)_b,$
\[(\delta \gs)(\lambda b)=\lambda^{d-\beta(e)}(\delta \gs)(b).\]
Thus $\lambda b\in\cN(\delta)$ if $b\in\cN(\delta)$, i.e.  the $\Cx$-action by scaling on $V^\vee$ leaves ${\cN(\delta)}$ invariant. Hence
$\Cx$ also leaves $\overline{\cN(\delta)}$ invariant.

Let $p:\C^n\backslash\{0\}\ra \P^{n-1}$ be the projection. Then $p(\overline{\cN(\delta)}\backslash\{0\})$ is a closed analytic subspace of $\P^{n-1}$, by Chow's theorem it is an algebraic subvariety. Thus its cone $\overline{\cN(\delta)}$ is an algebraic variety.
\end{proof}

Since $\cN(\delta)$ is a closed subset of $B$, $\cN(\delta)=\overline{\cN(\delta)}\cap B$.
\begin{thm}
If $\delta\in D_{V^\vee}$ is homogeneous  under scaling by $\C^\times$, $\cN(\delta)$ is algebraic.
\end{thm}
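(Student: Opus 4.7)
The plan is to deduce algebraicity of $\cN(\delta)$ as a nearly immediate corollary of the preceding proposition, which gives algebraicity of the analytic closure $\overline{\cN(\delta)}\subset V^\vee$ under the homogeneity hypothesis. Everything hinges on identifying $\cN(\delta)$ as the intersection of this algebraic set with a suitable Zariski open subset of $V^\vee$.

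First I would verify that $\cN(\delta)$ is closed in $B$ in the analytic topology. This follows because $\delta\sol(\tau)$ is an analytic locally constant sheaf on $B$: for any point $b\in B$, sections of $\delta\sol(\tau)$ are local holomorphic functions, so the set of common zeros on any small neighborhood is an analytic subvariety of that neighborhood, and $\cN(\delta)$ is the union of these local analytic sets. In particular $\cN(\delta)$ is a closed analytic subset of $B$, and hence $\cN(\delta)=\overline{\cN(\delta)}\cap B$, where the overline is closure inside the ambient $V^\vee$.

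Next I would invoke that $B=\Gamma(X,L)_{sm}\subset V^\vee$ is Zariski open: the complement $D=V^\vee\setminus B$ is the discriminant locus parametrizing singular hyperplane sections, which is cut out by algebraic equations in the coefficients. By the previous proposition, the analytic closure $\overline{\cN(\delta)}$ is an algebraic subvariety of $V^\vee$ under the homogeneity assumption on $\delta$. Therefore
\[
\cN(\delta)=\overline{\cN(\delta)}\cap B
\]
is the intersection of an algebraic (Zariski closed) subvariety of $V^\vee$ with a Zariski open subvariety, hence a locally closed algebraic subvariety of $V^\vee$, in particular algebraic.

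There is essentially no obstacle here beyond unpacking definitions. The only step that could merit a brief remark is the equality $\cN(\delta)=\overline{\cN(\delta)}\cap B$; a priori $\overline{\cN(\delta)}\cap B$ could be strictly larger than $\cN(\delta)$ if $\cN(\delta)$ were not closed in $B$, but closedness of $\cN(\delta)$ in $B$, established in the first step, rules this out. All the analytic work needed — regular singularity along a normal crossing divisor, descent through the blow-up via the Proper Mapping Theorem, and Chow's theorem for the $\C^\times$-invariant cone — has already been carried out in the earlier propositions of this section, so the final theorem is obtained simply by combining those with the trivial observation about intersections of algebraic sets with Zariski opens.
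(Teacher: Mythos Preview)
Your proposal is correct and follows essentially the same approach as the paper: the paper also derives the theorem as an immediate consequence of the previous proposition (algebraicity of $\overline{\cN(\delta)}$) together with the observation $\cN(\delta)=\overline{\cN(\delta)}\cap B$, which in turn rests on $\cN(\delta)$ being closed in $B$ (established earlier in the proof of Proposition~\ref{closure}). There is nothing missing or different in your argument.
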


\section{Non-emptiness of $\cN(\delta)$: $\P^1$ case}

We now consider the problem of non-emptiness of $\cN(\delta)$, starting with the simplest nontrivial case when $X=\P^1,\,G=SL_2$. In this case $R\equiv\C[x_1^2,x_2^2,x_1x_2]$, $f=a_0x_1x_2+a_1x_1^2+a_2x_2^2$. Recall that 
\begin{align*}
Z(h)&=-2a_1\partial_1+2a_2\partial_2\\
Z(x)&=-2a_2\partial_0-a_0\partial_1\\
Z(y)&=-2a_1\partial_0-a_0\partial_2.
\end{align*}
for
$$h=\begin{pmatrix}
1&0\\
0&-1
\end{pmatrix},
x=\begin{pmatrix}
0&1\\
0&0
\end{pmatrix},
y=\begin{pmatrix}
0&0\\
1&0
\end{pmatrix}.$$

\begin{prop}
If $X=\P^1,\,G=SL_2$, given a positive integer $d$, then $\cN(\delta)\neq \emptyset$ for every $\delta\in\C[\partial]_d.$
\end{prop}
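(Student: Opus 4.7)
The plan is to identify $\sol(\tau)$ explicitly as a rank-one local system and then analyze the zeros of $\delta\gs$ directly, with $SL_2$-equivariance handling the single delicate case.

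First I would show $\sol(\tau)$ is spanned by $\gs = g^{-1/2}$ where $g := a_0^2 - 4a_1 a_2$. The equations $Z(x)u = 0$ for $x \in sl_2$ say $u$ is infinitesimally (hence, by connectedness, globally) $SL_2$-invariant on $V^\vee$, and the $SL_2$-action on this $3$-dimensional irreducible representation has orbit space parametrized by $g$, so any local solution is a function of $g$. The Euler condition $(Z(e)+1)u = 0$ then becomes the ODE $2gF'(g) + F(g) = 0$, forcing $F(g) = Cg^{-1/2}$. A direct check shows $(\partial_0^2 - \partial_1\partial_2)\gs = 0$, so the remaining second-order equation coming from $\tilde I$ is automatic.

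Next, for $\delta = p(\partial) \in \C[\partial]_d$, homogeneity gives $\delta\gs$ degree $-(d+1)$, and an induction on $d$ using Leibniz writes
\[
\delta\gs \;=\; \frac{Q(a)}{g^{(2d+1)/2}}
\]
for a unique $Q \in \C[a_0,a_1,a_2]_d$ (possibly zero). Thus $\cN(\delta) = \{Q=0\} \cap B$ with $B = V^\vee\setminus\{g=0\}$. If $Q = 0$, then $\cN(\delta) = B$. Otherwise $\{Q=0\}$ is a nonempty hypersurface in $V^\vee = \C^3$, and $\cN(\delta)$ is empty only if $\{Q=0\}\subseteq\{g=0\}$. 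Since $g$ is irreducible, this forces $Q = cg^k$ with $c\ne 0$, $k\ge 1$; matching degrees forces $d = 2k$ to be even, so $Q = cg^{d/2}$.

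The heart of the proof, and the only real obstacle I foresee, is to rule out this last case. Because $\gs$ is $SL_2$-invariant, the map
\[
\Psi : \C[\partial]_d \longrightarrow \C[a_0,a_1,a_2]_d, \qquad p \longmapsto Q,
\]
is $SL_2$-equivariant. Both sides are isomorphic as $SL_2$-modules to $\Sym^d$ of the $3$-dimensional irreducible representation, which decomposes into irreducibles of highest weights $2d, 2d-4, \ldots$; for $d$ even this chain terminates in a one-dimensional trivial isotypic component. In the source this trivial piece is spanned by $(\partial_0^2 - \partial_1\partial_2)^{d/2}$, which lies in $\tilde I$ and hence annihilates $\gs$; so $\Psi$ vanishes on the trivial isotypic component of its source. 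By equivariance, the image of $\Psi$ has no nonzero $SL_2$-invariant component, so $Q = cg^{d/2}$ forces $c = 0$, a contradiction. Hence $\cN(\delta) \neq \emptyset$ in every case.
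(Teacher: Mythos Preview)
Your proof is correct and follows essentially the same route as the paper. Both identify the unique (up to scalar) solution $\Delta^{-1/2}$, reduce $\cN(\delta)=\emptyset$ to the condition that the numerator polynomial $Q$ be a nonzero power of $\Delta$, and then use $SL_2$-equivariance together with the fact that the invariant $(\partial_0^2-\partial_1\partial_2)^{d/2}$ annihilates $\Delta^{-1/2}$ to rule this out. The only organizational difference is that the paper phrases the last step via the decomposition $\C[\partial]_d=Ann_d\oplus S_d$ (where $Ann_d=\ker\Psi$ in your notation) and argues that the $S_d$-component of $\delta$ must be invariant hence zero, whereas you phrase it as the equivariant map $\Psi$ killing the trivial isotypic piece so that its image contains no nonzero invariant; these are equivalent formulations of the same Schur-type argument.
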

\begin{proof}
Step 1: $\gs\gl_2$ acts on $\C[\partial]_d$ by commutator $[Z(\xi),\delta]$ for $\xi\in\gs\gl_2$, $\delta\in\C[\partial]_d$. Since $\gs\gl_2$ is a semisimple Lie algebra and $\C[\partial]_d$ is a finite dimensional $\gs\gl_2$-module, $\C[\partial]_d$ is a semisimple $\gs\gl_2$-module. 

Step 2: Let $\Delta:=a_0^2-4a_1a_2.$ When $X=\P^1$, up to scalar the solution of $\tau$ is $\Delta^{-\frac{1}{2}}$. Define 
$$Ann_d:=Ann_{\C[\partial]_d}(\Delta^{-\frac{1}{2}}):= \{\alpha\in \C[\partial]_d\mid \alpha(\Delta^{-\frac{1}{2}})=0 \}.$$
Given $\alpha\in Ann_d,$ then $\alpha(\Delta^{-\frac{1}{2}})=0,$ thus $[Z(\xi),\alpha](\Delta^{-\frac{1}{2}})=0$ and $[Z(\xi),\alpha]\in Ann_d.$ Therefore $Ann_d$ is an $\gs\gl_2$-submodule of $\C[\partial]_d$.

Step 3: By Step 1, $\C[\partial]_d$ is a semisimple $\gs\gl_2$-module, then there exists an $\gs\gl_2$-submodule $S_d$ such that $\C[\partial]_d=Ann_d\oplus S_d$ as $\gs\gl_2$-modules.

Step 4: It is well known that $\gs\gl_2$-invariant ring is 
$$\{\alpha\in \C[\partial]_d\mid [Z(\xi),\alpha]=0 \;\forall \xi\in\gs\gl_2\}=\C[\partial]_d^{\gs\gl_2}=\C[\partial^2_0-\partial_1\partial_2]_d$$
 where $\C[\partial^2_0-\partial_1\partial_2]$ denotes the polynomial ring generated by a single element $\partial^2_0-\partial_1\partial_2$. It is clear that $\C[\partial]_d^{\gs\gl_2}\subset Ann_d$ for $d>0.$

Step 5: Let $\delta\in \C[\partial]_d$, we claim that $\cN(\delta)= \emptyset$ if and only if $\delta(\Delta^{-\frac{1}{2}})\in \C^\times\Delta^{-\frac{d+1}{2}}$. If $\delta(\Delta^{-\frac{1}{2}})\in \C^\times\Delta^{-\frac{d+1}{2}}$, then $\delta(\Delta^{-\frac{1}{2}})$ is nowhere vanishing. Thus $\cN(\delta)= \emptyset.$ For the other direction, we first observe that $$\delta(\Delta^{-\frac{1}{2}})=\Delta^{-\frac{1}{2}-d}P_d(a_0,a_1,a_2)$$ where $P_d$ is a homogeneous polynomial of degree $d$. Suppose $P_d$ factors into $P_d=\Delta^k q_{d-2k}$ where $\gcd(\Delta,q_{d-2k})=1$. Then  $\delta(\Delta^{-\frac{1}{2}})=\Delta^{-\frac{1}{2}-d+k} q_{d-2k}$. $\cN(\delta)=\emptyset$ implies that $\{\Delta^{-\frac{1}{2}-d+k} q_{d-2k}=0\}\cap\{\Delta\neq 0\}=\emptyset$ and thus $\{q_{d-2k}=0\}\subset\{\Delta= 0\}$. But $q_{d-2k}$ and $\Delta$ are coprime, it implies that  $q_{d-2k}\subset \Cx.$ Thus $d=2k$ and $\delta(\Delta^{-\frac{1}{2}})\in \C^\times\Delta^{-\frac{d+1}{2}}$.

Step 6:  Suppose $\cN(\delta)= \emptyset$, then by Step 5 we have $\delta(\Delta^{-\frac{1}{2}})\in \C^\times\Delta^{-\frac{d+1}{2}}$. Since $Z(\gs\gl_2)(\Delta^{-\frac{d+1}{2}})=0,$ it implies that $[Z(\gs\gl_2),\delta]\subset Ann_d.$ Step 3 tells us that $\delta=\delta'+\delta''$ where $\delta'\in Ann_d$, $\delta''\in S_d$ and
\[[Z(\xi),\delta]=[Z(\xi),\delta']+[Z(\xi),\delta''].\]
Since $[Z(\xi),\delta] \subset Ann_d$ and $[Z(\xi),\delta'] \subset Ann_d$, the direct sum forces  $[Z(\xi),\delta'']=0$ for all $\xi\in\gs\gl_2.$ This implies $\delta''\in \C[\partial]_d^{\gs\gl_2}\subset Ann_d$ when $d>0$. Then the direct sum further forces that $\delta''=0$. Thus $\delta=\delta'\in Ann_d.$ Therefore $\delta(\Delta^{-\frac{1}{2}})=0$, contradicts $\delta(\Delta^{-\frac{1}{2}})\in \C^\times\Delta^{-\frac{d+1}{2}}.$ 

Therefore given a positive integer $d$, $\cN(\delta)\neq \emptyset$ for every $\delta\in\C[\partial]_d.$
\end{proof}

\section{A degree bound}

In this section we consider $X=\P^{m},\, G=SL_{m+1}$. In this case, we will view $R=\C[a^\vee]$ as the subring of $\C[x_0,...,x_m]$ generated by the degree $m+1$ monomials in the $x_i$. This degree however will {\it not} be used below. The {\it degree} $\deg$ below shall refer to the degree in the variables $a_i^\vee$ which can be identified with a monomial basis of $V^\vee$.
 We now prove an important degree bound and use a rank approach to give another proof of $\cN(\delta)$ being algebraic.
\begin{lem}[Degree bound lemma]\label{dbl}
Take $X=\P^{m},\, \hat{\gg}=\gs\gl_{m+1}\oplus\C.$ Let $Z_i:=Z^\vee(x_i)$ where $x_i$ is a basis of $\hat{\gg}$. Suppose $f(b)$ is nonsingular. For $h\in R$,  $he^{f(b)}\equiv0$ in $H_0(\hat{\gg},Re^{f(b)})$ iff $$he^{f(b)}=\sum Z_i(r_i e^{f(b)})$$ for some $r_i\in R,$ and $\deg r_i\leq\deg h-1,\,\forall i.$
\end{lem}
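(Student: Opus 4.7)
The $(\Leftarrow)$ direction is immediate from the definition of $H_0(\hat{\gg}, Re^{f(b)})$. For the $(\Rightarrow)$ direction, my plan is to argue by descending induction on $N := \max_i \deg r_i$, ranging over all presentations $he^{f(b)} = \sum_i Z_i(r_i e^{f(b)})$, and to reduce $N$ step-by-step down to $\deg h - 1$.

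The first step is to translate the problem into the polynomial ring $R$. Using $Z^\vee(x) = -\sum_{j,k} x_{jk} a_k^\vee \partial/\partial a_j^\vee - \beta_0(x)$ together with $(\partial/\partial a_j^\vee)e^{f(b)} = b_j\,e^{f(b)}$, a direct Leibniz computation gives
\[
Z_i(r\,e^{f(b)}) = \bigl(\tilde D_{x_i}(r)\bigr)\,e^{f(b)}, \qquad \tilde D_x := Z^\vee(x) + L_x,
\]
where $L_x := -\sum_{j,k} x_{jk} b_j a_k^\vee \in R_1 = V^\vee$ is a linear form. Thus $\tilde D_x$ decomposes as a degree-preserving operator plus multiplication by the degree-$1$ element $L_x$. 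Identifying $R$ with the subring of $\C[x_0,\ldots,x_m]$ generated by degree-$(m+1)$ monomials, the set $\{L_x\}_{x\in\hat\gg}$ spans the degree-$(m+1)$ piece $J(b)_{m+1}$ of the Jacobian ideal $J(b)=(\partial_0 b,\ldots,\partial_m b)\subset \C[x_0,\ldots,x_m]$: for $x\in\gs\gl_{m+1}$ the form $L_x$ is a traceless linear combination of the $x_k\,\partial_j b$, while $L_e = -b$ lies in $J(b)_{m+1}$ via the Euler relation $\sum_j x_j\,\partial_j b = (m+1)\,b$. The lemma then reduces to: if $h\in R_D$ admits a representation $h = \sum_i \tilde D_{x_i}(s_i)$ in $R$, produce one with $\max_i \deg s_i \leq D-1$.

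For the inductive step, assume $N := \max_i \deg s_i \geq D$ and decompose $s_i = s_{i,N} + s_i^{<N}$ where $s_{i,N}\in R_N$. Comparing degree-$(N+1)$ components of $h = \sum_i \tilde D_{x_i}(s_i)$---using that $Z^\vee(x_i)$ preserves degree while multiplication by $L_{x_i}$ raises it by one---yields the top-degree syzygy
\[
\sum_i L_{x_i}\,s_{i,N} = 0 \quad \text{in } R_{N+1}.
\]
Here is where the nonsingularity of $f(b)$ enters decisively: the sequence $\partial_0 b,\ldots,\partial_m b$ is then regular in $\C[x_0,\ldots,x_m]$, so the Koszul complex on these generators is exact. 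Consequently, every syzygy among the $L_{x_i}$ in $J(b)_{m+1}$ is generated over $\C[x]$ by (i) linear dependencies among the $L_{x_i}$ inside $V^\vee$ (coming from $\ker(x \mapsto L_x)$) and (ii) Koszul-type relations arising from the trivial commutativity identities $L_{x_i}L_{x_j} - L_{x_j}L_{x_i} = 0$, pulled back through the minimal generators $\partial_k b$.

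Each such syzygy can then be lifted into the image $\sum_i \tilde D_{x_i}(R)$ via the Lie-algebra commutator identity $[\tilde D_x,\tilde D_y] = \tilde D_{[x,y]}$, which unpacks as the compatibility $L_{[x,y]} = Z^\vee(x)(L_y) - Z^\vee(y)(L_x)$ between $L$ and the $\hat\gg$-action; this lets me rewrite $\sum_i \tilde D_{x_i}(s_{i,N})$ as a combination $\sum_j \tilde D_{x_j}(u_j)$ with $\deg u_j \leq N-1$, modulo correction terms of degree $\leq N$ that are absorbed into the $s_i^{<N}$. The result is a new presentation of $h$ with $\max_i \deg s_i \leq N-1$, advancing the induction; iterating terminates at $N = \deg h - 1$. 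The principal obstacle is the syzygy-bookkeeping step: verifying that every syzygy on the spanning set $\{L_{x_i}\}$ of $J(b)_{m+1}$---not just on the minimal Jacobian generators $\partial_k b$---can be realized inside $\sum_i\tilde D_{x_i}(R)$ without raising any intermediate degree, which requires careful tracking of both the linear kernel of $x\mapsto L_x$ and of Koszul exactness in each graded piece.
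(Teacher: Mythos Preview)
Your approach is genuinely different from the paper's, and the difference is worth spelling out. The paper does \emph{not} start from an arbitrary presentation and reduce its degree. Instead it first shows, by a straightforward top-degree reduction, that for any $h\in R$ one can write
\[
he^{f(b)}=\sum_i Z_i(r_ie^{f(b)})+\Big(\sum_k c_kB_k\Big)e^{f(b)},\qquad \deg r_i\le \deg h-1,
\]
where the $B_k$ form a graded $\C$-basis of $R/I$ with $I=\langle x_u\partial_v f(b)\rangle$. This step uses only that $I$ is generated in $R$-degree $1$. The real content is then a \emph{dimension count}: since $R/I=(\C[x]/J)^{\mu_{m+1}}$ has dimension $h^m(X\setminus V(f(b)))$ by \cite{AS}\cite{G}, and this equals $\dim H_0(\hat\gg,Re^{f(b)})$ by the rank formulas of \cite{BHLSY}\cite{HLZ}, the classes of the $B_k$ are linearly independent in $H_0$. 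Hence $h\equiv 0$ in $H_0$ forces all $c_k=0$, giving the degree-bounded presentation for free. No syzygy analysis enters.

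Your inductive scheme, by contrast, hinges entirely on the claim that whenever $\sum_i L_{x_i}s_{i,N}=0$ in $R_{N+1}$ one can rewrite $\sum_i\tilde D_{x_i}(s_{i,N})$ in the form $\sum_j\tilde D_{x_j}(u_j)$ with $\deg u_j\le N-1$. You propose to do this by decomposing the syzygy into (i) $\C$-linear dependencies and (ii) commutativity relations $L_{x_i}L_{x_j}=L_{x_j}L_{x_i}$, and lifting via $[\tilde D_x,\tilde D_y]=\tilde D_{[x,y]}$. This is where there is a real gap. The $L_{x_i}$ number $(m+1)^2$ while $R$ has Krull dimension $m+1$, so they are far from a regular sequence; commutativity relations together with linear dependencies do \emph{not} generate the full syzygy module over $R$. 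Koszul exactness applies to the minimal generators $\partial_0 b,\dots,\partial_m b$ in $\C[x]$, but when you pull a syzygy $\sum_j\sigma_j\partial_j b=0$ back to the $L_{x_i}$, the Koszul coefficients $a_{jl}$ live in $\C[x]_{(m+1)(N-1)+2}$, a graded piece that is not a summand of $R$; there is no canonical way to push them into $R_{N-1}$ without introducing new ambiguities. The commutator identity you invoke handles only the very special syzygies of the form $s_i=L_{x_j}t,\ s_j=-L_{x_i}t$ with $t\in R_{N-1}$, and you have not shown these suffice. Equivalently, your induction is asking that the surjection $R/I\twoheadrightarrow H_0$ be injective degree by degree, which is exactly what the paper establishes \emph{only} by the global dimension comparison; a direct proof along your lines would need substantially more than what you have sketched.
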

\begin{proof}
The `if' direction is obvious. For the `only if' direction, consider the homogeneous ideal $I:=\bra x_u\partial_v {f(b)}|0\leq u,v\leq m\ket$ of $R$. Let $B_k$ denote a $\C$-basis for the degree $k$ part of $R/I$. First, since ${f(b)}$ is homogeneous of degree $1$, the degree $0$ part of $R/I$ is nonzero, and is spanned by $1$. For any $h\in R$, consider expanding the highest degree component of $h$, which we denote by $h_0$, in degree = $\deg h$ part of $R/I$ in terms of the chosen basis: i.e. by definition, there exist elements $s_i\in R$, such that $h_0-\sum s_iZ_i(f(b))$ can be written as a linear combination of the chosen basis elements in degree = $\deg h$. Obviously, we can require that $\deg s_i\leq \deg h-1$ for each $i$ by dropping all higher degree components of each of these $r_i$, if there are any. Working degree by degree, it is clear that we can choose $r_i\in R$ with $\deg r_i\leq \deg h-1, \forall i$, such that $he^{f(b)}=\sum Z_i(r_i e^{f(b)})+\sum c_kB_k$, where $\sum c_kB_k$ denote a linear combination of elements of the $B_k$ with all $k\leq \deg h$. Therefore, $H_0(\hat{\gg},Re^{f(b)})$ is spanned by $B_k$.

On the other hand, observed that $R/I=(\C[x_0,...,x_m]/J)^{\mu_{m+1}}$, where $J:=\bra\partial_i{f(b)}|0\leq i\leq m\ket$ is the Jacobian ideal of the nonsingular hypersurface ${f(b)}$, and $\mu_{m+1}$ is the group of $(m+1)$-th root of unity. By \cite{AS}\cite{G}, $\dim_{\C} (\C[x_0,...,x_m]/J)^{\mu_{m+1}}=h^{m}(X-V({f(b)}))$. Combining the algebraic and geometric rank formula for $\tau$, we have in this case, $h^{m}(X-V({f(b)}))=\dim H_0(\hat{\gg},Re^{f(b)})$. Therefore, the collection of $B_k$ consists of linearly independent elements, and $he^{f(b)}=0$ in $H_0(\hat{\gg},Re^{f(b)})$ iff all coefficients $c_k=0$.
\end{proof}

We now apply the lemma to derive explicit polynomial equations for the variety $\cN(\delta)$. Fix $\delta\in \C[\partial]$ be of degree $d$. 
Then $h e^f=\delta e^f$ where $h\in R$ such that $\tilde h=\delta$. By the lemma, each $b\in \cN(\delta)$ lies in the locus of an equation of the form
{\small\begin{equation}\label{expand}
 he^{f}=\sum Z_i(r_i e^{f})=\sum_i (Z_i(r_i) e^{f}+Z_i(f)r_i e^{f})
 \end{equation}}
for some $r_i=\sum_{j\in J}\lambda_j^i e_j,\,\lambda_j^i \in\C$, where $\{e_j\}_{j\in \cJ_{d-1}}$ being a basis of the subspace of $R$ of degree $\leq d-1$.  Thus $\deg Z_i(r_i)\leq d-1$ and $ Z_i(f)r_i$ is linear in the variables $a_i$, but is of degree $\leq d$ in the variables $a_i^\vee$.

In the basis $\{e_j\}_{j\in \cJ_d}$, $h$ can be viewed as a vector $\Theta\in\C^{\cJ_d}$. Let $\Lambda$ be the column vector with entries $\lambda^i_j,\,\forall i,j.$ Then comparing coefficients of the expansion of \eqref{expand} gives us a matrix $M_d(a)$ (depending only on $d$ but not on $\delta$ itself) whose entries lie in $\C+\sum_i\C a_i$ such that the following inhomogeneous linear system holds:
\begin{equation}\label{linear eqns}
M_d(b)\Lambda=\Theta.
\end{equation}
But in turn this is equivalent to the rank condition
\begin{equation}\label{rank cond}
\boxed{\rk M_d(b)=\rk[M_d(b)|\Theta].}
\end{equation}
To summarize, let's fix $\delta\in\C[\partial]$ (hence fix $h$ and $\Theta$) of degree $d$. For a given $b\in B$, Lemma \ref{dbl} says that  $b\in\cN(\delta)$ iff there exists $r_i\in R$ with $\deg r_i\leq d-1$ such that 
\begin{equation}\label{matrix}
he^{f(b)}=\sum Z_i(r_i e^{f(b)}).
\end{equation}
This is equivalent to saying that \eqref{rank cond} holds.  Thus we can conclude:

\begin{thm}
For a given $\delta\in\C[\partial]$ of degree $d$
\[\cN(\delta)=\{b\in B\mid \rk M_d(b)=\rk[M_d(b)|\Theta]\}.\]
Therefore $\cN(\delta)$ is an algebraic variety defined by the rank condition \eqref{rank cond}. In particular, $\cN(\delta)$ has a natural stratification given by $\rk M_d(b)$.
\end{thm}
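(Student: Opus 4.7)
The proof is essentially an assembly of ingredients that have already been set up in the section, so the plan is to organize them cleanly and verify the equivalences with care.

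First I would reduce the condition $b\in\cN(\delta)$ to a membership condition in a finite-dimensional vector space. By Theorem \ref{equiv0}, $b\in\cN(\delta)$ if and only if $\delta e^{f(b)}\in\hat\gg R e^{f(b)}$; using the isomorphism of Theorem \ref{isom}, writing $\tilde h=\delta$ so that $\delta e^{f(b)}=h e^{f(b)}\in R e^{f(b)}$, this becomes $h e^{f(b)}\equiv 0$ in $H_0(\hat\gg, R e^{f(b)})=R e^{f(b)}/\hat\gg R e^{f(b)}$. Since $b\in B$ means $f(b)$ is a smooth section and hence defines a nonsingular hypersurface, the Degree Bound Lemma \ref{dbl} applies and converts this coinvariance condition into the existence statement: there exist $r_0,\dots,r_N\in R$ with $\deg r_i\leq d-1$ satisfying
\[
h e^{f(b)}=\sum_i Z_i(r_i e^{f(b)}).
\]

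Next I would translate the existence of such $r_i$'s into a solvability question for a linear system. Expanding each $Z_i(r_i e^{f(b)})=Z_i(r_i)e^{f(b)}+Z_i(f(b))\, r_i e^{f(b)}$ and writing $r_i=\sum_{j\in\cJ_{d-1}}\lambda^i_j e_j$ in the fixed basis of the $\leq d-1$ degree part of $R$, every $Z_i(r_i)$ has degree $\leq d-1$ in $a_i^\vee$, while $Z_i(f(b))r_i$ is linear in the $a_i$'s and of degree $\leq d$ in the $a_i^\vee$'s. Equating coefficients in the degree $\leq d$ basis $\{e_j\}_{j\in\cJ_d}$ yields, by construction of $M_d(a)$ and $\Theta$, precisely the inhomogeneous linear system
\[
M_d(b)\Lambda=\Theta,
\]
where $\Lambda$ collects all the unknown $\lambda^i_j$. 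The key observation here is that the matrix $M_d(a)$ depends only on $d$ (its entries lie in $\C+\sum_i\C a_i$) and not on the choice of $\delta$; only $\Theta$ records the data of $\delta$.

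At this point I apply the Rouché–Capelli theorem: the inhomogeneous system $M_d(b)\Lambda=\Theta$ admits a solution $\Lambda$ if and only if $\rk M_d(b)=\rk[M_d(b)\mid\Theta]$. Chaining the equivalences gives the stated description of $\cN(\delta)$. For the algebraicity and stratification statements, the rank condition is cut out by the simultaneous vanishing of the $(k{+}1)\times(k{+}1)$ minors of $[M_d(b)\mid\Theta]$ together with the non-vanishing of some $k\times k$ minor of $M_d(b)$, as $k$ runs over possible values; each locus $\{b\in B\mid \rk M_d(b)=k,\ \rk[M_d(b)\mid\Theta]=k\}$ is a locally closed algebraic subset of $B$, and their disjoint union is $\cN(\delta)$, yielding the natural stratification.

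The only potentially delicate point in this plan is making sure the coefficient-extraction step that produces $M_d(b)\Lambda=\Theta$ is truly faithful, i.e.\ that different choices of $r_i$'s correspond bijectively to solutions of the linear system. This is automatic once one fixes the monomial bases $\{e_j\}_{j\in\cJ_{d-1}}$ and $\{e_j\}_{j\in\cJ_d}$, because then the map $\{r_i\}_i\mapsto \sum_i Z_i(r_i e^{f(b)})$, read in the chosen basis of the target, is exactly the linear map given by $M_d(b)$. I do not expect any genuine obstacle beyond bookkeeping.
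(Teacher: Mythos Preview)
Your proposal is correct and follows essentially the same approach as the paper: the paper's argument is precisely the discussion preceding the theorem statement, which chains Theorem~\ref{equiv0} (the coinvariant description of $\cN(\delta)$), the Degree Bound Lemma~\ref{dbl}, and the translation into the linear system $M_d(b)\Lambda=\Theta$, exactly as you outline. Your added remarks naming Rouch\'e--Capelli explicitly and spelling out the locally closed stratification by minors are just elaborations of what the paper leaves implicit.
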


\section{Periods of elliptic curves}

\subsection{Some preparation}
In this section we consider the case $X=\P^2$. $G=SL_3$. Then $\pi: \cY\ra B$ is the family of smooth elliptic curves in $X$.
We write the basis of $V$ as $\{a_I\mid I=(ijk), \,i+j+k=3,\, i,j,k\geq 0\}$, which is dual to the monomial basis $x_1^ix_2^jx_3^k$ of sections in $V^\vee$.
Let $S,T$ be Aronhold invariants of a ternary cubic, then 
$\C[V^\vee]^{SL_3}=\C[S,T].$ Let $\Delta=64S^3-T^2$ be the discriminant.

\begin{lem}
There is a natural action of $G$ on $B$. $B/G=\Spec \C[S,T,\Delta^{-1}]$. In particular, $S,T$ give a global coordinate system on the two dimensional nonsingular variety $B/G$.
\end{lem}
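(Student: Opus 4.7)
The plan is to verify the statement in three logically separate pieces: (a) $G$ really does act on $B$; (b) the ring of invariants of the $G$-action on $V^\vee$, restricted to $B$, is $\C[S,T,\Delta^{-1}]$; (c) the induced morphism $B\to\Spec\C[S,T,\Delta^{-1}]$ is a geometric quotient, so that $(S,T)$ are genuine global coordinates on $B/G$.

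For (a): The $G=SL_3$ action on $X=\P^2$ by linear change of homogeneous coordinates induces a $G$-action on $V=\Gamma(\P^2,\cO(3))^\vee$ and dually on $V^\vee=\Gamma(\P^2,\cO(3))$, the space of ternary cubic forms. Because smoothness of the hypersurface $V(f)\subset \P^2$ is an intrinsic geometric property, the condition $f\in B$ (equivalently, $\Delta(f)\neq 0$) is preserved by $G$. This is just a remark but should be included to fix notation.

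For (b): I would invoke the classical result of Aronhold--Clebsch--Salmon that the ring of $SL_3$-invariants on ternary cubics is the free polynomial algebra $\C[V^\vee]^{SL_3}=\C[S,T]$ in the Aronhold invariants $S$ (degree $4$) and $T$ (degree $6$), with $\Delta=64S^3-T^2$ being the (up to scalar) discriminant whose vanishing locus is exactly the singular cubics. Since $B$ is the principal open set $\{\Delta\neq 0\}\subset V^\vee$ and $\Delta\in\C[S,T]$, invariant-theoretic localization gives $\C[B]^{SL_3}=\C[V^\vee]^{SL_3}[\Delta^{-1}]=\C[S,T,\Delta^{-1}]$.

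For (c): The delicate point is upgrading the categorical/GIT quotient to a geometric one. Every smooth ternary cubic is GIT-stable for the $SL_3$-action on $V^\vee$ (this is the classical fact that the semistable locus is cut out by $\Delta\neq 0$, and on the smooth locus the stabilizers are finite, since the projective automorphism group of a smooth plane cubic is finite). Consequently by Mumford's GIT, the morphism $B\to\Spec\C[S,T,\Delta^{-1}]$ is a geometric quotient: it is surjective, its fibers are precisely the $G$-orbits, and it identifies $B/G$ with $\Spec\C[S,T,\Delta^{-1}]$. Since $\C[S,T,\Delta^{-1}]$ is the localization of a polynomial ring in two algebraically independent generators, $\Spec\C[S,T,\Delta^{-1}]$ is a smooth affine surface and $(S,T)$ are global coordinates on it.

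The main obstacle in this outline is the passage from invariants to a geometric quotient, i.e., that orbits in $B$ are separated by $S$ and $T$. The cleanest way to see this is to note that on $B$ the invariants $S,T$ (equivalently the normalized $j$-invariant $j\sim S^3/\Delta$) determine the isomorphism class of the corresponding elliptic curve $V(f)\subset \P^2$, and two smooth cubics in $\P^2$ are projectively equivalent iff they define isomorphic elliptic curves (any abstract isomorphism is induced by a projective linear transformation since the embedding is via $|\cO(3)|$ on an elliptic curve and such embeddings are unique up to $\mathrm{Aut}(\P^2)=PGL_3$). That is the essential geometric input; everything else is bookkeeping.
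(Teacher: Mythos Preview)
Your proposal is correct and follows essentially the same route as the paper: identify $B=\{\Delta\neq 0\}$ as the GIT-stable locus for the $SL_3$-action on ternary cubics, so that orbits are closed and the categorical quotient $\Spec\C[a_I,\Delta^{-1}]^G=\Spec\C[S,T,\Delta^{-1}]$ is already geometric. The paper's proof is much terser---it simply asserts stability (citing a GIT reference) and the classical description of the invariant ring---while you supply more of the surrounding justification and an alternative orbit-separation argument via the $j$-invariant; but the core argument is the same.
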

\begin{proof}
We have $B=\Spec \C[a_I,\Delta^{-1}]$, the stable locus of the $G$-action on $B$ (see \cite[Theorem 1.6]{N}). Thus every $G$-orbit in $B$ is closed, and we have 
$$B/G=\Spec \C [a_I,\Delta^{-1}]^G=\Spec \C[S,T,\Delta^{-1}],$$
 where it is well known that $S,T$ are algebraically independent.
\end{proof}
\begin{lem}\label{vanish lemma}
Let $\delta$ be a first order differential operator with constant coefficient (i.e. $\delta=\sum_{I} \lambda_I\frac{\partial}{\partial a_I}$, for constants $\lambda_I$). Let $h:=\delta f$ where $f:=\sum_{I}a_Ia_I^\vee$ is the universal section. Then given any point $b\in B$, the following are equivalent:
\begin{enumerate}
\item $b\in \cN(\delta)$, 
\item  $h=Z_xf(b)$ for some $x\in \gs\gl_3$,
\item $(\delta S)(b)=0$ and $(\delta T)(b)=0$.
\end{enumerate}
\end{lem}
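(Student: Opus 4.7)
The plan is to prove (1)$\Leftrightarrow$(2) via Theorem~\ref{equiv} combined with the Degree Bound Lemma~\ref{dbl}, and then to prove (2)$\Leftrightarrow$(3) using the description of the GIT quotient $B/G$ from the preceding lemma.

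For (1)$\Leftrightarrow$(2): by Theorem~\ref{equiv}, condition (1) is equivalent to $\delta e^{f(b)} \in \hat\gg R e^{f(b)}$. A direct computation using $\partial_I e^f = a_I^\vee e^f$ gives $\delta e^{f(b)} = h e^{f(b)}$, so (1) becomes $h e^{f(b)} \in \hat\gg R e^{f(b)}$. Since $h \in R_1$ has degree one, the Degree Bound Lemma applies and yields the further equivalence $h e^{f(b)} = \sum_i Z_i(r_i e^{f(b)})$ for some scalars $r_i \in \C$. Expanding the right-hand side using $Z^\vee(x) = -x - \beta_0(x)$ and the fact that the $x_i$ act as derivations on $R$, the $\gs\gl_3$-basis terms contribute $-r_i (x_i f(b)) e^{f(b)} \in R_1\, e^{f(b)}$, while the Euler generator $e$ (with $\beta_0(e)=1$) contributes $-r_e (f(b)+1) e^{f(b)}$, producing an extra degree-zero term $-r_e$. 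Matching with $h \in R_1$ forces $r_e = 0$, and setting $x = \sum_i r_i x_i \in \gs\gl_3$ then yields $h = Z_x f(b)$. The reverse implication is obtained by reading the Leibniz computation backwards.

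For (2)$\Leftrightarrow$(3): under the canonical identification $T_b V^\vee \cong V^\vee$, the tangent vector at $b$ associated to $\delta = \sum_I \lambda_I \partial_{a_I}$ is exactly $h = \sum_I \lambda_I a_I^\vee$, and so $(\delta S)(b)$ and $(\delta T)(b)$ are the directional derivatives of $S$ and $T$ along $h$. By the preceding lemma, $B/G = \Spec\C[S,T,\Delta^{-1}]$ with $S,T$ as coordinates; and since every smooth ternary cubic is $SL_3$-stable with finite stabilizer, every $G$-orbit in $B$ has dimension $\dim G = 8$. Hence the quotient map $\pi: B \to B/G$ is a smooth submersion, whose differential at $b$ is $(dS|_b, dT|_b)$, and $\ker d\pi|_b = T_b(G\cdot b) = \{Z_x f(b) : x \in \gs\gl_3\}$ (recalling $f(b)=b$). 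Condition (3) is precisely the statement that $h \in \ker d\pi|_b$, which is (2).

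The main subtlety is the degree-matching step in the proof of (1)$\Leftrightarrow$(2): the Euler generator $e \in \hat\gg$ is the one basis element whose action is not a pure derivation of $R$, so its scalar coefficient $r_e$ must be forced to zero by isolating the constant-term contribution from the $R_1$-valued contributions of the $\gs\gl_3$ basis. Once this is handled, the remainder of the argument is essentially formal and only uses the standard GIT fact that smooth plane cubics are $SL_3$-stable.
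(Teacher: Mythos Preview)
Your proof is correct and follows essentially the same route as the paper: both arguments establish (1)$\Leftrightarrow$(2) by combining the coinvariant criterion with the Degree Bound Lemma to reduce to scalar $r_i$ and then eliminate the Euler coefficient by a degree match, and both establish (2)$\Leftrightarrow$(3) by identifying $h$ with the tangent vector $\delta$ at $b$ and reading $(\delta S)(b),(\delta T)(b)$ as the components of the differential of the quotient map $B\to B/G$, whose kernel is the orbit tangent space. Your exposition is slightly more explicit than the paper's in justifying that $\pi$ is a submersion (via finite stabilizers), and there is a harmless sign slip in your shorthand $Z^\vee(x)=-x-\beta_0(x)$, but neither affects the argument.
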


\begin{proof}
Since $\delta$ is of degree 1, by Lemma \ref{dbl}, $b\in \cN(\delta)$ iff 
\begin{equation}\label{cond}
he^{f(b)}=\sum_{i=1}^8 Z_i(c_ie^{f(b)})+(E+1)(c_Ee^{f(b)})
\end{equation}
for some complex numbers $c_i$ and $c_E$, where $Z_i$ is a basis of $\gs\gl_3$, $E$ is the Euler operator. $h=\delta f$ implies that $h$ is a homogeneous polynomial in $a_I^\vee$ of degree 1 with constant coefficients, i.e. an element in the section space $V^\vee$. So \eqref{cond} holds iff
\begin{equation}\label{c1}
he^{f(b)}=\sum_{i=1}^8 Z_i(c_ie^{f(b)})
\end{equation}
for some complex numbers $c_i$. This is equivalent to
\begin{equation}\label{c2}
h=Z_xf(b)
\end{equation}
for some $x\in \gs\gl_3$.

Next, we identify $V^\vee$ with its tangent space at $b$, where $a_I^\vee$ is identified with $\frac{\partial}{\partial a_I}$.

Note that the identification is compatible with the action of $\gs\gl_3$. Under this identification, it is clear that $h$ is identified with $\delta$. ($\delta=\sum_{I}\lambda_I\frac{\partial}{\partial a_I}$ is identified with $\sum_{I} \lambda_Ia_I^\vee=h$.) We consider the projection map $p: B\mapsto B/G$. We denote the tangent map at $b$ by $dp_b$. At $b$, $dp_b(\delta)=0$ iff $\delta=h$ lies in the tangent space of the $G$-orbit $G\cdot b$, i.e. iff \eqref{c2} holds.

$S$ and $T$ are global coordinates of $B/G\subset\Spec (V^\vee)^G=\Spec\C[S,T]$. Thus, we have
{\small\begin{equation}\label{tangent}
dp_b(\delta)=\left.\frac{\partial}{\partial S}(\delta S)\right|_b+\left.\frac{\partial}{\partial T}(\delta T)\right|_b
\end{equation}}
So $dp_b(\delta)=0$ iff $(\delta S)(b)=0$ and $(\delta T)(b)=0$.
\end{proof}

This shows that
$$\cN(\delta)=\cM\cap B,\bl \cM:=\{b\in B\mid \delta S(b)=\delta T(b)=0\}\subset V^\vee.$$
In particular, this implies that $\cN(\delta)=\emptyset$ iff $\cM\subset\{\Delta=64S^3-T^2=0\}$. By Nullstellensatz, this is equivalent to 
$$\Delta\in\sqrt{\bra\delta S,\delta T\ket}$$
the radical of the ideal $\bra\delta S,\delta T\ket$. In other words
\begin{equation}\label{Null}
\Delta^m\in\bra \delta S,\delta T\ket
\end{equation}
for some integer $m>0$.

\begin{rem}
If we do not require $\delta$ to be constant coefficients, then $\cN(\delta)$ can be empty. E.g. Take $\delta$ to be Euler and $\beta\neq1$. Then $\cN(\delta)$ is the set $b$ where $\delta \gs(b)=\gs(b)=0$ for all periods $\gs$, hence empty because there is no point $b\in B$ where all periods vanish.
\end{rem}

\subsection{Main theorem}

\begin{thm}\label{main theorem}
Let $\delta=\sum_I \lambda_I\partial_I$, where $(\lambda_I)\in\Z[i]^{10}$ 
and 
{\small\eq{gcd assumption}{\gcd(\lambda_I)_I=1, ~(1+i)|\lambda_{111}\text{ in $\Z[i]$ and $\{\lambda_{300},\lambda_{030},\lambda_{003}\}\neq \{1,0,0\}\mod (1+i)$.}}}
Then $\cN(\delta)\neq\emptyset$.
\end{thm}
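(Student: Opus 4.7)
By Lemma \ref{vanish lemma}, proving $\cN(\delta)\neq\emptyset$ amounts to exhibiting a smooth cubic $b\in B$ over $\C$ with $\delta S(b)=\delta T(b)=0$; equivalently, via Nullstellensatz, to ruling out $\Delta\in\sqrt{\langle\delta S,\delta T\rangle}$ in $\C[a_I]$. My plan is to produce such $b$ arithmetically, by reduction modulo the prime $(1+i)$ in $\Z[i]$ and Hensel lifting, taking advantage of a dramatic simplification in characteristic $2$.

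\textbf{Step 1 (Reduction mod $(1+i)$).} Since $\delta S,\delta T,\Delta$ all have coefficients in $\Z[i]$, they reduce to polynomials in $\F_2[a_I]$ via $\Z[i]\twoheadrightarrow \Z[i]/(1+i)\cong\F_2$. The gcd assumption guarantees $\bar\delta\neq 0$ in $\F_2[\partial]$. The crucial identity in characteristic $2$ is
\[ \bar\Delta \;=\; \overline{64S^3-T^2} \;=\; \bar T^{\,2}, \]
so the open locus $\{\bar\Delta\neq 0\}$ coincides with $\{\bar T\neq 0\}$, and smoothness of a mod-$2$ cubic is governed by the single polynomial $\bar T$.

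\textbf{Step 2 (A smooth $\F_2$-point).} Under the arithmetic hypotheses \eqref{gcd assumption}, construct an explicit $\bar b\in B_{\F_2}$ with $\overline{\delta S}(\bar b)=\overline{\delta T}(\bar b)=0$ and $\bar T(\bar b)\neq 0$. A natural test family is provided by the Hesse pencil $f=\alpha(x_1^3+x_2^3+x_3^3)+\beta\,x_1x_2x_3$, on which $\bar S,\bar T$ admit transparent explicit formulas. The hypotheses are calibrated so that, for each admissible residue pattern of $(\bar\lambda_{300},\bar\lambda_{030},\bar\lambda_{003})$ together with $\bar\lambda_{111}=0$, one can solve $\overline{\delta S}=\overline{\delta T}=0$ off the $\bar T=0$ locus by suitable choice of Hesse parameters and of the remaining $\bar\lambda_I$ for the six middle multi-indices $I\in\{(210),(201),(120),(021),(102),(012)\}$. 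The forbidden pattern $\{1,0,0\}$ is precisely the degenerate case in which $\bar\delta$ concentrates at a single vertex monomial $x_i^3$, collapsing the test family to a locus contained in $\{\bar T=0\}$. A finite case analysis over the five remaining admissible patterns then delivers $\bar b$, possibly after base change to $\overline{\F_2}$. Finally, verify via the Jacobian criterion that the gradients of $\overline{\delta S},\overline{\delta T}$ at $\bar b$ are linearly independent, so $\bar b$ is a smooth point of $\{\overline{\delta S}=\overline{\delta T}=0\}\cap\{\bar T\neq 0\}$.

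\textbf{Step 3 (Lifting to $\C$).} Let $Z:=\Spec\Z[i][a_I,\Delta^{-1}]/(\delta S,\delta T)$, a finite-type $\Z[i]$-scheme whose special fiber (after the identification in Step 1) contains $\bar b$ as a smooth point. Step 2 shows that $Z$ is smooth over $\Spec\Z[i]$ at $\bar b$, in particular flat over the discrete valuation ring $\Z[i]_{(1+i)}$. Hensel's lemma then lifts $\bar b$ to a $\Z[i]_{(1+i)}$-point of $Z$, which via any embedding $\Z[i]_{(1+i)}\hookrightarrow\C$ yields a $\C$-point $b\in B$ with $\delta S(b)=\delta T(b)=0$, so $b\in\cN(\delta)$.

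\textbf{Main obstacle.} The technical heart is Step 2: producing $\bar b$ demands concrete manipulation of the Aronhold invariants modulo $2$ and a case analysis tied precisely to the arithmetic of $(\lambda_I)$ mod $(1+i)$. Understanding conceptually why the excluded pattern $\{1,0,0\}$ is the unique obstruction to the Hesse-family construction, and verifying the Jacobian nondegeneracy at $\bar b$ uniformly across the admissible patterns, is where the bulk of the explicit computation will live.
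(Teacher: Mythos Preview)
Your strategy is genuinely different from the paper's: the paper argues by contradiction, assuming $\Delta^m=h_S\,\delta S+h_T\,\delta T$ for some $m$ and then performing a careful $(1+i)$-adic analysis of the coefficients of $h_S,h_T$ to reach a contradiction. You instead try to construct a point directly by reduction mod $(1+i)$ and lifting.

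There is, however, a fatal gap in Step~2, which then breaks Step~3. From the explicit form of $T$ one has $T\equiv H^2\pmod 4$ (the paper records this, and it also follows from the six odd-coefficient monomials in $T$ all being perfect squares). Hence $\delta T=\delta(H^2)+4(\cdots)=2H\,\delta H+4(\cdots)$, so $\delta T\equiv 0\pmod 2$; equivalently, $(1+i)^2\mid \gcd$ of the coefficients of $\delta T$. Consequently $\overline{\delta T}=0$ identically in $\F_2[a_I]$. Your Jacobian criterion ``the gradients of $\overline{\delta S},\overline{\delta T}$ at $\bar b$ are linearly independent'' can therefore never hold. Geometrically, the special fiber of $Z=\Spec\Z[i][a_I,\Delta^{-1}]/(\delta S,\delta T)$ at $(1+i)$ is cut out by the single equation $\overline{\delta S}=0$ and has dimension one larger than the generic fiber; $Z$ is not flat, let alone smooth, over $\Z[i]$ at any point of this fiber, and Hensel does not apply.

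A natural attempt to repair this is to replace $\delta T$ by $\tfrac12\delta T\in\Z[i][a_I]$, whose reduction mod $(1+i)$ is $H\,\delta H$ (so on $\{H\neq 0\}$ it becomes $\overline{\delta H}$). But then you must locate $\bar b$ with $\overline{\delta S}(\bar b)=\overline{\delta H}(\bar b)=0$, $H(\bar b)\neq 0$, and the Jacobian of $(\delta S,\tfrac12\delta T)$ of rank $2$ at $\bar b$; this is a substantially different and more delicate computation than what you sketched, and it is exactly here that the paper's detailed case analysis of the residue patterns of $(\lambda_{300},\lambda_{030},\lambda_{003})$ enters. As written, Step~2 also slips on quantifiers: the $\lambda_I$ are data of $\delta$, not parameters you may ``choose''; only $\bar b$ is at your disposal. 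Finally, even granting smoothness, Hensel produces a point over the completion $\widehat{\Z[i]_{(1+i)}}$, not over $\Z[i]_{(1+i)}$; you would then want to argue instead that smoothness implies the generic fiber $Z_{\Q(i)}$ is nonempty, whence $Z$ has a $\C$-point.
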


We prove this by a series of lemmas.

Recall that 
$$\C[V^\vee]^{SL_3}=\C[S,T]$$
where $S,T$ are the Aronhold invariants, which are respectively polynomials of degree 4 and 6 with integer coefficients in 10 variables. In order to use their explicit expressions given in \cite{S} which we include in Appendix \ref{appendix B}, we must multiply each variable $a_I=a_{ijk}$ appearing in our universal cubic section $f=\sum_I a_I x^I$ by the factor  $i! j! k!\over 3!$. {\it All use of the $a_I$ in this proof will be the $a_I$ defined in \cite{S}.}

Recall that the discriminant polynomial for the cubic plane curves is $\Delta=64S^3-T^2\in\Z[a]_{12}$. Evaluating $\Delta$ at the point $a_{111}=1$ and $a_I=0$ for $I\neq(111)$ yields $\Delta=0$, since this point defines the singular curve
$x_1x_2x_3=0$. Thus the monomial $a_{111}^{12}$ does not appear in the polynomial $\Delta$.

{\it Suppose $\cN(\delta)=\emptyset$.} Then by Nullstellensatz there exists a positive integer $m$ and
$$(h_S,h_T)\in W:=\C[a]_{12m-3}\oplus \C[a]_{12m-5}$$ 
such that \eqref{Null} becomes
\begin{equation}\label{*}
\Delta^m=(64S^3-T^2)^m=h_S\delta S+h_T\delta T.
\end{equation}
Fix an ordering of the monomial basis in the $a_I$ for each $\Q(i)[a]_k$, and so that we can now represent the polynomial $\Delta^m$ by the column $\Z$-vector $\theta$ given by the polynomial's coefficients, and $(h_S,h_T)$ by a column $\C$-vector $h$. Then \eqref{*} is equivalent to a matrix equation of the form
\begin{equation}\label{linear}
Mh=\theta
\end{equation}
where $M$ is a matrix over $\Z[i]$ defined by the expressions of $\delta S$ and $\delta T$.
This equation has a solution $h$ iff 
\begin{equation}\label{rk cond}
\rk{}_{\C}(M)=\rk{}_{\C}[M|\theta].
\end{equation}
But since $M$ and $[M|\theta]$ are defined over $\Q(i)$ this equation is equivalent to (because rank of a matrix remains the same under field extensions)
\begin{equation}
\rk{}_{\Q(i)}(M)=\rk{}_{\Q(i)}[M|\theta].
\end{equation}
Therefore, we can assume that
$(h_S,h_T)\in W_{\Q(i)}:=\Q(i)[a]_{12m-3}\oplus \Q(i)[a]_{12m-5}$.

Write $h_S=\frac{1}{pd}r_S$, $h_T=\frac{1}{qd}r_T$, where the coefficients of each of the polynomials $r_S,r_T\in\Z[i][a]$ have $\gcd$ 1, and $p,q,d\in\Z[i]$, with $\gcd(p,q)=1$ in $\Z[i]$. Then we get

\lemma{rSrT}{There exist $r_S,r_T\in\Z[i][a]$ each having coefficients with $\gcd$ 1, and $p,q,d\in\Z[i]$, with $\gcd(p,q)=1$ in $\Z[i]$, and $m\in\Z_{>0}$ such that
\begin{equation}\label{new}
pqd(64S^3-T^2)^m=qr_S\delta S+pr_T\delta T.
\end{equation}}

{\it Notations.} $e_I$ denotes the standard unit vector in $\Z^{10}$ corresponding to $I$. For $\mu=(\mu_I)\in(\Z_{\geq 0})^{10}$, write $a^\mu=\prod_I a_I^{\mu_I}$ and $|\mu|=\sum_I \mu_I$. Note that $a^\mu$ is invariant under the diagonal maximal torus of $SL_3(\C)$ iff the {\it index sum} $\sum_I\mu_I I$ of $\mu$ is equal to $|\mu|(1,1,1)$.
\begin{lem}\label{nonproportional}
Let $\mu,\mu'\in(\Z_{\geq 0})^{10}$ such that $|\mu|=|\mu'|=\ell$ and $a^\mu,a^{\mu'}$ are torus invariant. Then for any $I\neq I'$, if $\partial_Ia^\mu$ and $\partial_{I'}a^{\mu'}$ are nonzero, then they are not proportional (over any field).
\end{lem}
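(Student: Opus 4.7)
The plan is to reduce proportionality of the two monomials $\partial_I a^\mu$ and $\partial_{I'} a^{\mu'}$ to an equality of exponent vectors, and then rule out that equality using the torus-invariance constraint on the index sums.

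First I would compute the derivatives explicitly: since $\partial_I a^\mu = \mu_I\, a^{\mu-e_I}$ (which is nonzero precisely when $\mu_I\ge 1$), and similarly for $\partial_{I'} a^{\mu'}$. Both are (nonzero scalar multiples of) monomials in the variables $\{a_J\}$. Two monomials in independent variables are proportional over any field if and only if they have the same exponent vector, so if $\partial_I a^\mu$ and $\partial_{I'} a^{\mu'}$ are proportional then
\[
\mu - e_I \;=\; \mu' - e_{I'}, \qquad\text{i.e.}\qquad \mu - \mu' \;=\; e_I - e_{I'}.
\]

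Next I would invoke torus invariance. As explained just before the lemma, a monomial $a^\nu$ is invariant under the diagonal maximal torus of $SL_3$ precisely when its index sum satisfies $\sum_J \nu_J J = |\nu|(1,1,1)$. Applying this to both $\mu$ and $\mu'$ and subtracting, using $|\mu|=|\mu'|=\ell$, we obtain
\[
\sum_J (\mu_J-\mu'_J)\,J \;=\; (|\mu|-|\mu'|)(1,1,1) \;=\; 0.
\]
On the other hand, the candidate equality $\mu-\mu' = e_I-e_{I'}$ gives the index-sum $\sum_J (\mu_J-\mu'_J)J = I - I'$, so we would conclude $I = I'$, contradicting the hypothesis $I\ne I'$.

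The argument is short and essentially a bookkeeping check, so I do not anticipate a real obstacle; the only thing to be a little careful with is the formal statement that two nonzero monomials in the polynomial ring $\C[a_J]$ are proportional over a field if and only if their exponent vectors coincide, which is immediate from the linear independence of distinct monomials. Thus the lemma follows and no further case analysis is needed.
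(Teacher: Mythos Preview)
Your argument is correct and is essentially the same as the paper's: both use that torus invariance forces the index sum of $a^{\mu-e_I}$ to be $\ell(1,1,1)-I$, so distinct $I$'s give distinct index sums and hence non-proportional monomials. The paper phrases this as computing the two index sums $(\ell-i,\ell-j,\ell-k)$ and $(\ell-i',\ell-j',\ell-k')$ directly, while you phrase it via the difference $\mu-\mu'=e_I-e_{I'}$, but the content is identical.
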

\begin{proof}
This is because the index sums of the monomials in $\partial_Ia^\mu$ and $\partial_{I'}a^{\mu'}$ are different:
\[(\ell-i,\ell-j,\ell-k)\neq (\ell-i',\ell-j',\ell-k')\]
for $I\neq I'$.
\end{proof}
We will apply this to the cases $\ell=4,6$. For homogeneous polynomial $P\in\C[a]$, denote by $c_\mu(P)$ the coefficient of the monomial $a^\mu$ in $P$, so that 
$$P=\sum_\mu c_\mu(P) a^\mu.$$

 \begin{lem}\label{gcd of delta S}
 Consider the degree 4 invariant polynomial $S$.
For each index $I$, there exists $\mu$ such that $c_\mu(S)=\pm 1$ and $\mu_I=1$. For each such pair $(I,\mu)$
$$c_{\mu-e_I}(\delta S)=\pm\lambda_I.$$ 
Therefore the $\gcd(c_\nu(\delta S))_\nu=1$. More generally, without assuming $\gcd(\lambda_I)_I=1$, we also have
$$\gcd(c_\nu(\delta S))_\nu | \gcd(\lambda_I)_I.$$
\end{lem}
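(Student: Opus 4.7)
The plan is to combine the explicit Salmon-style formula for $S$ recalled in Appendix~\ref{appendix B} with the no-overlap principle from Lemma~\ref{nonproportional}. First I would verify, by direct inspection of the formula for $S$, that for every one of the ten indices $I=(ijk)$ with $i+j+k=3$ there exists a monomial $a^\mu$ appearing in $S$ with coefficient exactly $\pm 1$ and with $\mu_I=1$. Because $S$ is $\mathfrak{S}_3$-invariant under permutation of $\{x_1,x_2,x_3\}$, the ten indices fall into three orbits, namely $\{(300),(030),(003)\}$, $\{(210),(201),(120),(021),(102),(012)\}$, and $\{(111)\}$, and by symmetry it suffices to exhibit one suitable monomial per orbit and then apply the $\mathfrak{S}_3$-action.

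Second, given such a pair $(I,\mu)$, I would compute $c_{\mu-e_I}(\delta S)$. Expanding $\delta S=\sum_{I'}\lambda_{I'}\partial_{I'}S$, the monomial $a^{\mu-e_I}$ may a~priori receive contributions $\lambda_{I'}\partial_{I'}a^{\mu'}$ from monomials $a^{\mu'}$ of $S$ other than $a^\mu$. But every monomial appearing in $S$ is diagonal-torus invariant with total degree $|\mu'|=4$, so Lemma~\ref{nonproportional} guarantees that $\partial_{I'}a^{\mu'}$ and $\partial_I a^\mu$ are not proportional whenever $I'\neq I$. Hence only the single term $\lambda_I\partial_I a^\mu=\lambda_I a^{\mu-e_I}$ contributes to the coefficient of $a^{\mu-e_I}$ in $\delta S$, yielding $c_{\mu-e_I}(\delta S)=\pm\lambda_I$ as claimed (the sign being inherited from $c_\mu(S)=\pm 1$).

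Third, the gcd assertions follow immediately. Setting $d:=\gcd(c_\nu(\delta S))_\nu$, the identity $c_{\mu-e_I}(\delta S)=\pm\lambda_I$ shows that $d$ divides $\lambda_I$ for every $I$, hence $d\mid\gcd(\lambda_I)_I$, which is the general divisibility claimed. Specializing to the case $\gcd(\lambda_I)_I=1$ forces $d=1$.

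The main obstacle, and the only non-routine part, is the first step: exhibiting a $\pm 1$-coefficient monomial containing each $a_I$ linearly. The rescaling $a_I\mapsto(i!\,j!\,k!/3!)\,a_I$ used to match the conventions of~\cite{S} can alter integer coefficients, so one must verify the $\pm 1$ property in the rescaled basis actually used throughout this paper rather than in the raw Salmon expression. The reduction to three orbit representatives under $\mathfrak{S}_3$ keeps the case analysis short, but the verification remains a finite, explicit check that must be carried out directly from the formula in Appendix~\ref{appendix B}.
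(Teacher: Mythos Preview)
Your proposal is correct and follows essentially the same approach as the paper: inspect the explicit formula for $S$ to find the required $\pm1$-coefficient monomials, then invoke Lemma~\ref{nonproportional} to rule out cross-contributions in $\delta S$, and finally read off the gcd divisibility. Your use of the $\mathfrak{S}_3$-symmetry to reduce the first step to three orbit representatives is a clean organizational improvement over the paper's bare ``the explicit expression of $S$ shows the first statement holds,'' but not a different argument. Your caveat about the rescaling $a_I\mapsto(i!\,j!\,k!/3!)\,a_I$ is well taken in principle, but note that the paper has already fixed this convention just before the lemma (``All use of the $a_I$ in this proof will be the $a_I$ defined in \cite{S}''), so the formula in Appendix~\ref{appendix B} \emph{is} the operative one and can be inspected directly without further adjustment.
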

\begin{proof}
The explicit expression of $S$ shows the first statement holds. For the second statement, consider
$$\delta S=\sum_{I',\mu'} c_{\mu'}(S)\lambda_{I'}\partial_{I'} a^{\mu'}=\sum_{I',\mu'} c_{\mu'}(S)\lambda_{I'}\mu_{I'}' a^{\mu'-e_{I'}}.$$
For the given pair $(I,\mu)$, the summands on the right that are proportional to the monomial term $c_{\mu}(S)\lambda_I\partial_I a^\mu=\pm\lambda_I a^{\mu-e_I}$ must be those with $\mu'-e_{I'}=\mu-e_I$. But Lemma \ref{nonproportional} forces $I=I'$, hence $\mu=\mu'$. This proves the second statement. Finally, the last statement follows from the second and the assumption \eqref{gcd assumption}.
\end{proof}

Now consider the explicit expression $T\in\Z[a]$. There are exactly 6 monomial terms with odd coefficients, namely
\begin{equation}\label{T0}
\begin{split}
T_0:=&a_{300}^2a_{030}^2a_{003}^2-3a_{300}^2a_{021}^2a_{012}^2-3a_{030}^2a_{201}^2a_{102}^2\\ 
&-3a_{003}^2a_{210}^2a_{120}^2-27a_{201}^2a_{120}^2a_{012}^2-27a_{210}^2a_{102}^2a_{021}^2
\end{split}
\end{equation}
so that $T$ has the form $T=T_0+2T_1$ with $T_1\in\Z[a]$. 

(1) Note that $c_\nu(\delta T_0)\neq0$ implies that $\nu$ satisfies the index sum condition that
$$\sum_I\nu_I I=(6,6,6)-(i,j,k),~~~\text{ for some $i,j,k\geq0$ with $i+j+k=3$.}$$
If $\nu=\mu-e_I$ then this condition uniquely determines $\mu$ and $I$, with $\sum_I\mu_I I=(6,6,6)$.

(2) Each $a_I\neq a_{111}$ appears in some monomial $a^\mu$ in $T_0$ and with exponent 2. 

(3) Since $T=T_0+2T_1$, by Lemma \ref{nonproportional}, for $a_I\neq a_{111}$
\eq{tem1}{c_{\mu-e_I}(\delta T_0)=c_{\mu}(T_0)\lambda_I \mu_I}
where $\mu,I$ are uniquely determined by a given $\nu=\mu-e_I$.

(4) By the same lemma
$$c_{\mu-e_I}(\delta T)=
\begin{cases}
c_{\mu}(T_0)\lambda_I\mu_I~\text{ or } 2c_\mu(T_1)\lambda_I\mu_I & I\neq(1,1,1)\\
2c_{\mu}(T_1)\lambda_I \mu_I & I=(1,1,1)
\end{cases}$$
where $\mu,I$ are uniquely determined by a given $\nu=\mu-e_I$. Note that if $c_{\mu-e_I}(\delta T)=c_{\mu}(T_0)\lambda_I\mu_I\neq0$ then $\mu_I=2$.

\begin{lem}\label{delta T}
 The prime $1+i$ appears in prime factorization of
$\gcd(c_\nu(\delta T))_\nu$ in $\Z[i]$ with exponent exactly 2.
\end{lem}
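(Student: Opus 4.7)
The plan is to bound the $(1+i)$-adic valuation of $\gcd_\nu c_\nu(\delta T)$ by $2$ from both sides, exploiting the identity $(1+i)^2 = 2i$ in $\Z[i]$ (so divisibility by $(1+i)^2$ is the same as divisibility by $2$). The starting point is that for any $\nu$ with $|\nu|=5$, the index-sum condition together with Lemma \ref{nonproportional} (applied to monomials of degree $6$) forces a unique pair $(\mu,I)$ with $\mu = \nu + e_I$ and $a^\mu$ torus invariant, so
\[
c_\nu(\delta T) \;=\; c_\mu(T)\,\lambda_I\,\mu_I, \qquad c_\mu(T) \;=\; c_\mu(T_0)+2\,c_\mu(T_1).
\]

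For the lower bound, I would split on whether $a^\mu$ is one of the six monomials of $T_0$. In Case A, inspection of \eqref{T0} shows that every index occurring in $T_0$ does so with exponent $2$, hence $\mu_I = 2$, while $c_\mu(T_0)\in\{\pm 1,\pm 3,\pm 27\}$ is odd; therefore $c_\mu(T)$ is odd and $c_\nu(\delta T)=2\cdot(\text{odd})\cdot\lambda_I$ is divisible by $(1+i)^2$. In Case B, $c_\mu(T)=2c_\mu(T_1)$ is even, and the factor of $2$ again makes $c_\nu(\delta T)$ divisible by $(1+i)^2$.

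For the matching upper bound I need to exhibit a single $\nu$ at which $(1+i)^3$ fails to divide $c_\nu(\delta T)$. This is where the first two conditions in \eqref{gcd assumption} enter: since $\gcd(\lambda_I)=1$ but $(1+i)\mid\lambda_{111}$, some $\lambda_{I_0}$ with $I_0\neq(1,1,1)$ must be a unit modulo $(1+i)$. A direct inspection of \eqref{T0} shows that every index $I\neq(1,1,1)$ appears in at least one monomial of $T_0$ with exponent $2$, so I can pick $\mu$ with $c_\mu(T_0)\neq 0$ and $\mu_{I_0}=2$. Then $\nu := \mu - e_{I_0}$ lies in Case A and yields $c_\nu(\delta T)=\pm 2\cdot(\text{odd integer})\cdot\lambda_{I_0}$, whose $(1+i)$-valuation is exactly $2$.

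The main obstacle I anticipate is purely combinatorial bookkeeping: confirming that the index-sum argument really pins down $(\mu,I)$ uniquely from $\nu$, and that each of the nine non-central indices occurs with exponent $2$ in at least one monomial of $T_0$. Both are mechanical checks on \eqref{T0}. Note that the third hypothesis in \eqref{gcd assumption} concerning $\{\lambda_{300},\lambda_{030},\lambda_{003}\}$ is not used for this lemma; it presumably plays its role in a later step of the proof of Theorem \ref{main theorem}, where $\gcd(c_\nu(\delta S))_\nu$ and $\gcd(c_\nu(\delta T))_\nu$ must be compared against the coefficients of $(64S^3-T^2)^m$.
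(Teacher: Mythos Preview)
Your proof is correct and follows essentially the same route as the paper: you use the uniqueness of $(\mu,I)$ from the index-sum condition (the paper's observation (1)) to write $c_\nu(\delta T)=c_\mu(T)\lambda_I\mu_I$, then split into the $T_0$/$T_1$ cases exactly as in the paper's observation (4) for the lower bound, and for the upper bound you pick $I_0\neq(1,1,1)$ with $(1+i)\nmid\lambda_{I_0}$ and a monomial of $T_0$ containing it with exponent $2$, just as the paper does via its observations (2) and (4). Your remark that the third clause of \eqref{gcd assumption} is not needed here is also accurate.
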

\begin{proof}
By assumption \eqref{gcd assumption}, in $\Z[i]$
$$(1+i)\nmid\gcd(\lambda_I)_{I\neq(1,1,1)}.$$
Trivially in $\Z[i]$
$$2=(1+i)^2(-i),\hskip.2in(1+i)\nmid n,~~n\in2\Z+1.$$
Note that by (4), for $\nu=\mu-e_I$, $(1+i)^2|c_\nu(\delta T)$ for all $\nu$. So, it remains to show that $(1+i)^3\nmid c_\nu(\delta T)$ for some $\nu$. Pick $I\neq(1,1,1)$ such that $(1+i)\nmid\lambda_I$. By (2) and (4), we can find $\mu$ such that 
$$c_{\mu-e_I}(\delta T)=2c_{\mu}(T_0)\lambda_I\neq0.$$
Since $c_\mu(T_0)$ is odd, this number is not divisible by $(1+i)^3$.
\end{proof}

\lemma{p=1}{$p$ is a unit in $\Z[i]$. Therefore there exists $r_S,r_T\in\Z[i][a]$ each $r_S,r_T$ having coefficients with $\gcd$ 1, such that
$$\boxed{qd(64S^3-T^2)^m=qr_S\delta S+r_T\delta T.}$$}
\proof
Consider \eqref{new} in Lemma \ref{rSrT}. Since $\gcd(p,q)=1$, if $p_0$ is a prime factor of $p$ in $\Z[i]$, then the right side $\mod p_0$ is $r_S\delta S\mod p_0$, which is nonzero since the coefficients of $r_S$ have $\gcd$ 1 by assumption, and likewise for $\delta S$ by Lemma \ref{gcd of delta S}. But the left side of \eqref{new} is zero $\mod p_0$, a contradiction. This shows that $p$ is a unit in $\Z[i]$ which we can assume it is 1, by absorbing $p^{-1}$ into $r_S,r_T$.
\qed

\lemma{}{$(1+i)^3\nmid q$.}
 \proof For otherwise Lemma \ref{p=1} implies that $(1+i)^3|r_T\delta T$. Since $(1+i)^2|\delta T$ but $(1+i)^3\nmid\delta T$ by Lemma \ref{delta T}, it follows that $(1+i)|r_T$, contradicting that $r_S\in\Z[i][a]$ has coefficients with $\gcd$ 1. 
 \qed

\lemma{2 not divide q}{$(1+i)^2\nmid q$. }  
\proof
Suppose $2i=(1+i)^2$ divides $q$. Then we can write $q=2q_1$ with $q_1\in\Z[i]$ with $\gcd(q_1,1+i)=1$. 

Since $\half\delta T\in\Z[i][a]$, we can divide equation in Lemma \ref{p=1} by 2 and get
\begin{equation}\label{q1}
q_1 d \Delta^m=q_1r_S\delta S+r_T{\delta T\over 2} \in\Z[i][a].
\end{equation}

First we consider this equation in the residue field $\mod(1+i)$, which is $\Z/2\Z[a]=\F_2[a]$. Since $q_1\equiv1$, $64\equiv0$, $T\equiv H^2$, we get
\begin{equation}\label{red mod 2}
dH^{4m}\equiv r_S\delta S+r_T\frac{\delta T}{2}
\end{equation}
where
\ea{H&=&a_{300}a_{030}a_{003}+a_{300}a_{021}a_{012}+a_{030}a_{201}a_{102}\\
&&+a_{003}a_{210}a_{120}+a_{201}a_{120}a_{012}+a_{210}a_{102}a_{021}.}
This follows from direct observation that
$$\boxed{\Delta=T^2=(\partial_{111}S)^2=H^4\mod2.}$$

\claim\label{H irreducible}{$H$ is irreducible in $\F_2[a]$.}
\proof
For otherwise, we can factorize
$$H=(P_1 a_{300}+P_2)P_3$$
with the $P_i\in\F_2[a]$ independent of $a_{300}$ but $P_3$ is nonconstant, and $P_1P_3=a_{030}a_{003}+a_{021}a_{012}$. Check easily that this implies that $P_1$ is constant, so that we can set $P_1=1$. Therefore $P_2$ has degree 1 and
{\small$$P_2(a_{030}a_{003}+a_{021}a_{012})=a_{030}a_{201}a_{102}
+a_{003}a_{210}a_{120}+a_{201}a_{120}a_{012}+a_{210}a_{102}a_{021}$$}
which is clearly impossible.
\qed

Now $\mod2$ (hence also $\mod(1+i)$), the explicit expression of $S$ gives
$$S\equiv H a_{111}+a_{111}^4+P$$
for some $P\in\F_2[a]$ independent of $a_{111}$ and has $\deg P=4$. Setting 
$Q=\lambda_{111} H+\delta P$, we get

{\claim\label{delta S mod 1+i}{There exists $Q\in\F_2[a]$ independent of $a_{111}$ such that
$$\delta S\equiv(\delta H) a_{111}+Q\mod(1+i).$$}}

Next, recall from \eqref{T0} that $T=T_0+2T_1$. The explicit expression of $T_1$ has the form $T_1=T_2+T_3$ where $T_2\in\Z[a]$ depends on $a_{111}$  and $2|T_2$, and $T_3\in\Z[a]$ is independent of $a_{111}$. Then
$$\half\delta T=\half\delta T_0+\delta T_2+\delta T_3.$$
Note that $\half\delta T_0\in\Z[a]$ is also independent of $a_{111}$. Taking $\mod (1+i)$, $\delta T_2$ drops out since $2|T_2$. This shows 

{\claim{}{$\half\delta T\mod(1+i)\in\F_2[a]_5$ is independent of $a_{111}$, and it is nonzero by Lemma \ref{delta T}.}}

Suppose $(1+i)| d$. 
Let $u:=\gcd(\delta S,{\delta T\over 2})\in\F[i][a]$. By Lemma \ref{gcd of delta S} $\gcd(c_\nu(\delta S))_\nu=1$, thus $u\neq 0\mod (1+i)$. Then we have $P_S,P_T\in \Z[i][a]$  such that $\delta S=uP_S,~~{\delta T\over 2}=uP_T.$
Then in $\F_2[a]$ equation \eqref{q1} becomes
$$0\equiv q_1r_S\delta S+r_T{\delta T\over 2}\equiv u(q_1r_SP_S+r_TP_T).$$
Thus 
$$0\equiv q_1r_SP_S+r_TP_T\equiv r_SP_S+r_TP_T.$$

Since $(P_S,P_T)=1\in\Z[i][a]$, there exists $\al,\be\in\Z[i][a]$ such that $\al P_S+\be P_T=1$. Thus $\al P_S+\be P_T\equiv1\mod (1+i)$, i.e. $(P_S,P_T)= 1\in\F_2[a]$.
Therefore there exists some $h\in \F_2[a]$ such that $r_S\equiv h P_T$ and $r_T\equiv -h P_S\equiv -q_1h P_S$.

Now we take $h_0\in\Z[i][a]$ to be any lift of $h$ in $\Z[i][a]$. Then 
$$r_S= h_0P_T+r_S'\text{ and }r_T= -q_1h_0P_S+r_T'$$ for some $r_S',~r_T'\in \Z[i][a]$ and $(i+1)| r_S',~(i+1)| r_T'$.
Then equation \eqref{q1} becomes
{\small\begin{align*}
q_1 d \Delta^m&=q_1(h_0P_T+r_S')\delta S+(-q_1h_0P_S+r_T'){\delta T\over 2}  \\
&=q_1r_S'\delta S+r_T'{\delta T\over 2}\in\Z[i][a].
\end{align*}}
Now we can see that $(i+1)$ is a common factor on both sides, so we have
$$q_1 {d\over 1+i} \Delta^m=q_1{r_S'\over 1+i}\delta S+{r_T'\over 1+i}{\delta T\over 2} \in\Z[i][a].$$

Since $d\neq0$, there exists a largest positive integer $k$ such that
$$d_1={d\over(1+i)^k}\in\Z[i],\quad (1+i)\nmid d_1.$$
Then we can repeat the process and get 
\begin{equation}\label{reduced d}
q_1 d_1 \Delta^m=q_1r_S''\delta S+r_T''{\delta T\over 2} \in\Z[i][a].
\end{equation}
for some $r_S'',r_T''\in\Z[i][a]$. 
Note that $r_S'',r_T''$ not necessarily have the property that their coefficients have $\gcd$ 1. 

Claim \ref{delta S mod 1+i} shows that 
$$\delta S\equiv(\delta H) a_{111}+Q\mod(1+i)$$
with $Q=\lambda_{111} H+\delta P$. 
By \eqref{gcd assumption}, $\lambda_{111}\equiv 0\mod(1+i)$, then
\[\delta S\equiv(\delta H) a_{111}+\delta P\mod(1+i).\]
By looking at the explicit expression of $T$, we observe that $T\equiv H^2\mod 4.$ It implies that ${\delta T\over 2}\equiv  {H\delta H}\mod 2$ and hence
$${\delta T\over 2}\equiv  {H\delta H}\mod (1+i).$$
Then \eqref{reduced d} becomes
\[H^{4m}\equiv r_S''((\delta H) a_{111}+\delta P)+r_T''(H\delta H)\mod (1+i).\]
Now we evaluate both sides at $a_{111}=a_{120}=a_{102}=a_{210}=a_{012}=a_{021}=a_{201}=0.$ Let $Q|$ denotes the evaluation of $Q$ under this condition.
We observe that every term in $P$ contains at least two of these $a_I$'s, so $(\delta P)|=0$.
Then we get
\eq{evaluation}{(a_{300}a_{030}a_{003})^{4m-1}\equiv r_T''|(\delta H)|.}
We observe that 
$$(\delta H)|=\lambda_{300}a_{030}a_{003}+\lambda_{030}a_{300}a_{003}+\lambda_{003}a_{300}a_{030}.$$

Under condition \eqref{gcd assumption}, there are three types remaining:
\begin{enumerate}
\item  $\{\lambda_{300},\lambda_{030},\lambda_{003}\}\equiv \{0,0,0\}\mod (1+i)$.
Then $(\delta H)|\equiv 0\mod (1+i)$ and it contradicts \eqref{evaluation}.

\item $\{\lambda_{300},\lambda_{030},\lambda_{003}\}\equiv \{1,1,1\}\mod (1+i)$. Then
$$(\delta H)|=a_{030}a_{003}+a_{300}a_{003}+a_{300}a_{030}.$$
It is irreducible in $\F_2[a_{300},a_{030},a_{003}]$ and it does not divide $(a_{300}a_{030}a_{003})^{4m-1}$, it contradicts the fact that $\F_2[a_{300},a_{030},a_{003}]$ is an UFD.

\item $\{\lambda_{300},\lambda_{030},\lambda_{003}\}\equiv \{1,1,0\}\mod (1+i)$.
If $\lambda_{300}=1,\lambda_{030}=1,\lambda_{003}=0$, 
$$(\delta H)|=a_{030}a_{003}+a_{300}a_{003}=(a_{030}+a_{300})a_{003}.$$
But $a_{030}+a_{300}$ doesn't divide $(a_{300}a_{030}a_{003})^{4m-1}$, it contradicts the fact that  the fact that $\F_2[a_{300},a_{030},a_{003}]$ is an UFD.
\end{enumerate}

This shows that our initial supposition that $(1+i)^2|q$ is false, hence proving Lemma \ref{2 not divide q}.
\qed

\lemma{1+i divides q}{$(1+i)|q$.}
\proof 
Suppose not.

\claim\label{delta S=H}{$\delta S=H\mod(1+i).$}
\proof We have
\begin{equation}\label{red mod 2'}
qdH^{4m}=qr_S\delta S+r_T\delta T.
\end{equation}
Therefore in $\F_2[a]$, we have 
$$dH^{4m}=r_S\delta S$$ 
since $2|\delta T$. Since the coefficients of $r_S$ have $\gcd$ 1 by assumption, and same for $\delta S$ by Lemma \ref{gcd of delta S},  the right side is nonzero, hence $d$ is coprime to $(1+i)$, and hence $H=\delta S$ in $\F_2[a]$ (because the only unit in $\F_2$ is 1).
\qed

\claim\label{delta S=0}{Without assuming $\gcd(\delta):=\delta(\lambda_I)_I=1$, if $\delta S=0\mod(1+i)$ then $\delta=0\mod(1+i)$.}
\proof
We have $\delta S=0\mod(1+i)$ iff $(1+i)|\gcd(\delta S)$. Thus $(1+i)|\gcd(\lambda_I)$,  by Lemma \ref{gcd of delta S}, hence $\delta=0\mod(1+i)$.
\qed

\claim\label{delta-111}{$\delta=\partial_{111}\mod(1+i)$.}
\proof
The explicit expression of $S$ yields $\partial_{111}S=H\mod(1+i)$. So by Claim \ref{delta S=H}
$$(\delta-\partial_{111})S=H-H=0\mod(1+i).$$
Now letting  $\delta-\partial_{111}$ play the role of $\delta$ in Claim \ref{delta S=0}, implies the claim.
\qed

To finish the proof of Lemma \ref{1+i divides q}, observe that Claim \ref{delta-111} contradicts \eqref{gcd assumption}. This shows that the supposition that $(1+i)\nmid q$ is false.
\qed

\lemma{}{$q$ does not exists, hence Theorem \ref{main theorem} is proved.}
\proof
Lemmas \ref{1+i divides q} and \ref{2 not divide q} imply $(1+i)|q$ and $(1+i)^2\nmid q$. Lemma \ref{rSrT} gives
$${q\over1+i}d\Delta^{4m}={q\over 1+i}r_S\delta S+r_T{\delta T\over1+i}.$$
Since $(1+i)|{\delta T\over1+i}$, taking $\mod(1+i)$ yields
$$dH^{4m}=r_S\delta S\mod(1+i).$$
Again, since $\gcd(r_S)=1$ and $\delta S\neq0\mod(1+i)$, $d\neq0$ hence $H=\delta S\mod(1+i)$ as in Claim \ref{delta S=H}, hence
$$\delta=\partial_{111}\mod(1+i)$$
as in Claim \ref{delta-111}, which contradicts  \eqref{gcd assumption} again. 
\qed

\begin{prop}
The set of $\delta$ where $N(\delta)$ is nonempty, is dense in $V^\vee$, in analytic topology. Hence it is dense in Zariski topology.
\end{prop}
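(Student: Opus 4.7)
The strategy is to exploit the scaling invariance $\cN(c\delta)=\cN(\delta)$ for $c\in\C^\times$, which follows from $(c\delta)\gs(b)=c\,\delta\gs(b)$, together with Theorem \ref{main theorem}. Identifying a first-order constant-coefficient operator $\delta=\sum_I\lambda_I\pa_I$ with its coefficient vector $\lambda\in V^\vee\cong\C^{10}$, let $\Lambda\subset\Z[i]^{10}$ denote the set of Gaussian integer tuples satisfying the three hypotheses of Theorem \ref{main theorem}. By the scaling invariance above, it suffices to show that $\C^\times\cdot\Lambda$ is dense in $V^\vee$.

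Given $\delta=(\lambda_I)\in V^\vee$ and $\epsilon>0$, I would pick a large positive integer $M$ and construct $\mu=(\mu_I)\in\Z[i]^{10}$ approximating $M\lambda$ coordinatewise, with adjustments as follows. Round $\mu_{111}$ to the nearest point of $(1+i)\Z[i]$; round each of $\mu_{300},\mu_{030},\mu_{003}$ to the nearest element of $(1+i)\Z[i]$, so the triple is $(0,0,0)\pmod{1+i}$, which is not a permutation of $(1,0,0)$; fix an index $I_0\notin\{(1,1,1),(3,0,0),(0,3,0),(0,0,3)\}$ and round $\mu_{I_0}$ to the nearest element of $1+(1+i)\Z[i]$; round the remaining $\mu_I$ to the nearest element of $\Z[i]$. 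Since each of these cosets in $\C$ has a fundamental domain of $O(1)$ diameter, one obtains $|\mu-M\lambda|\leq C$ for a constant $C$ independent of $M$.

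Next, set $g:=\gcd_I(\mu_I)\in\Z[i]$. Since $\mu_{I_0}\not\equiv 0\pmod{1+i}$, the Gaussian prime $(1+i)$ does not divide $g$, so $g$ is a unit modulo $(1+i)$. Define $\lambda_0:=\mu/g\in\Z[i]^{10}$; then $\gcd(\lambda_0)=1$, and since $(1+i)$ is coprime to $g$ while dividing each of $\mu_{111},\mu_{300},\mu_{030},\mu_{003}$, it divides the corresponding entries of $\lambda_0$. Hence $(1+i)\mid\lambda_{0,111}$ and $(\lambda_{0,300},\lambda_{0,030},\lambda_{0,003})\equiv(0,0,0)\pmod{1+i}$, so $\lambda_0\in\Lambda$ and Theorem \ref{main theorem} yields $\cN(\lambda_0)\neq\emptyset$. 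Setting $\delta':=\mu/M=(g/M)\lambda_0$, scaling invariance gives $\cN(\delta')=\cN(\lambda_0)\neq\emptyset$, while $|\delta-\delta'|=|M\lambda-\mu|/M\leq C/M<\epsilon$ once $M$ is sufficiently large. This proves density in the analytic topology, from which density in the (coarser) Zariski topology is immediate.

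The main obstacle is the interplay between the gcd normalization $\lambda_0=\mu/g$ and the $(1+i)$-adic hypotheses: dividing out by $g$ could in principle destroy the divisibility conditions on $\lambda_{0,111}$ and alter the residue class of the triple $(\lambda_{0,300},\lambda_{0,030},\lambda_{0,003})$ modulo $(1+i)$. The remedy is to reserve the coordinate $\mu_{I_0}$ for a value deliberately chosen to be odd modulo $(1+i)$; this forces $(1+i)\nmid g$, so $g$ is a unit mod $(1+i)$ and the $(1+i)$-divisibility of the relevant entries of $\mu$ descends intact to $\lambda_0$, preserving all three hypotheses of Theorem \ref{main theorem}.
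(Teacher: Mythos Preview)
Your argument is correct and follows essentially the same route as the paper's proof: both exploit the scaling invariance $\cN(c\delta)=\cN(\delta)$, approximate an arbitrary $\lambda$ by a scaled Gaussian-integer vector with prescribed residues modulo $(1+i)$, and then divide out by the $\gcd$, using the fact that one coordinate was deliberately made coprime to $(1+i)$ to ensure the division preserves the hypotheses of Theorem~\ref{main theorem}. The only cosmetic difference is which coordinates you force to be odd or even modulo $(1+i)$: the paper works inside the subset $S_0$ where $\lambda_{300},\lambda_{030},\lambda_{003}\equiv 1$ and all others $\equiv 0$, whereas you set the triple to $(0,0,0)$ and reserve a separate index $I_0$ for the odd residue.
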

\begin{proof}
Let 
{\small\begin{align*}
S:=\{(\lambda_I)\in\Z[i]^{10}\mid \gcd(\lambda_I)_I=1, ~(1+i)|\lambda_{111}\text{ in }\Z[i]\\
\text{ and }\{\lambda_{300},\lambda_{030},\lambda_{003}\}\neq \{1,0,0\}\mod (1+i)\}.
\end{align*}}
Then we have shown in Theorem \ref{main theorem} that $\cN(\delta)\neq\emptyset$ for all $\lambda\in S$. We are going to show that given any point $\delta_{\bar\lambda}:=\sum_I\bar\lambda_I\partial_I\in V^\vee,$ we can find a sequence $\lambda^k\in\Q(i)^{10}$ such that $\lim_{k\ra\infty}\lambda^k=\bar\lambda$ and $\cN(\delta_{\lambda^k})\neq\emptyset$ for all $k$.

We consider a subset of $S_0\subset S$:
{\small\begin{align*}
S_0:=\{&(\lambda_I)\in\Z[i]^{10}\mid \gcd(\lambda_I)_I=1, \lambda_{300}\equiv\lambda_{030}\equiv\lambda_{003}\equiv 1\mod (1+i) \\
&\text{ and }\lambda_I\equiv 0\mod (1+i) \text{ for  }I\neq 300,030,003\}.
\end{align*}}
Since $\Q(i)^{10}$ is dense in $\C$, we can find a sequence $x^k\in\Q(i)^{10}$ such that $\lim_{k\ra\infty} x^k=\bar\lambda.$ For each $k$, we choose some $q^k\in\Z$ such that 
$q^kx^k\in\Z[i]^{10}$
and $\lim_{k\ra\infty}q^k=\infty.$

Then we look at each entry, say $I=111$. If $q^kx^k_{111}\equiv 0\mod (1+i)$, let $\lambda^k_{111}=x^k$; if $q^kx^k_{111}\equiv 1\mod (1+i)$, let $\lambda^k_{111}=\dfrac{q^kx^k_{111}+1}{q^k}$. We repeat this process to make each entry of $q^k\lambda^k$ satisfy the $\mod (1+i)$ condition in $S_0$. 
Then it is clear that $\lim_{k\ra\infty} |x^k-\lambda^k|=0$ and thus $$\lim_{k\ra\infty} \lambda^k=\lim_{k\ra\infty} x^k=\bar\lambda.$$
Note that $q^k\lambda^k$ is not necessarily in $S_0$ since it may not satisfy the $\gcd$ condition. Let $d^k=\gcd(q^k\lambda^k_I)_I\in\Z[i].$ Then by our construction $d^k\equiv 	1\mod(1+i)$. Then we consider $\dfrac{q^k\lambda^k}{d^k}\in\Z[i]^{10}$. It is clear that $\dfrac{q^k\lambda^k}{d^k}\in S_0$ and Theorem \ref{main theorem} implies that $\cN(\delta_{\frac{q^k\lambda^k}{d^k}})\neq\emptyset$. Since $\delta_{\frac{q^k\lambda^k}{d^k}}$ is homogeneous, $\cN(\delta_{\lambda^k})=\cN(\delta_{\frac{q^k\lambda^k}{d^k}})\neq\emptyset$, as desired.
\end{proof}

\begin{cor}
There exists a nonempty Zariski open subset $U_0\subset V^\vee$, such that for each $\delta\in U_0$, $N(\delta)\neq\emptyset$.
\end{cor}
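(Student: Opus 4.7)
The plan is to deduce the corollary from the preceding Proposition together with a constructibility argument. Set
\[ U := \{\delta \in V^\vee \mid \cN(\delta) \neq \emptyset\}. \]
Since $V^\vee$ is an irreducible affine space and the preceding Proposition shows that $U$ is Zariski dense in $V^\vee$, it suffices to prove that $U$ is constructible: any Zariski dense constructible subset of an irreducible variety automatically contains a nonempty Zariski open subset, which would be the desired $U_0$.

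To establish constructibility, I would form the incidence locus
\[ Z := \{(\delta, b) \in V^\vee \times B \mid b \in \cN(\delta)\}. \]
By Lemma \ref{vanish lemma}, applicable in this $X=\P^2$ setting where the operator $\delta = \sum_I \lambda_I \partial_I$ is identified with its coefficient vector $(\lambda_I) \in V^\vee$, the condition $b \in \cN(\delta)$ is equivalent to the two polynomial equations $(\delta S)(b) = 0$ and $(\delta T)(b) = 0$. Each of these expressions depends linearly on $\delta$ and polynomially on $b$, so $Z$ is Zariski closed inside $V^\vee \times B$; since $B$ is Zariski open in $V^\vee$, the set $Z$ is locally closed, hence constructible, inside $V^\vee \times V^\vee$. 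Applying Chevalley's theorem to the first projection $\pi_1 : Z \to V^\vee$ then yields that $U = \pi_1(Z)$ is constructible, and combined with its density this produces the required open subset $U_0$.

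I do not anticipate a real obstacle; the bulk of the work has already been carried out in Theorem \ref{main theorem} and the preceding Proposition, and the only step requiring a moment of care is verifying that the bilinear dependence of $(\delta S)(b)$ and $(\delta T)(b)$ on the pair $(\delta, b)$ is enough to make $Z$ algebraic in $V^\vee \times B$, which is immediate from the explicit formulas for $S$ and $T$.
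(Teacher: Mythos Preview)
Your proposal is correct and follows essentially the same approach as the paper: the paper forms the same incidence scheme $\Spec \C[\lambda_I,a_I,\Delta^{-1}]/\langle\sum_I\lambda_I\partial_I S,\ \sum_I\lambda_I\partial_I T\rangle$, projects to $\Spec\C[\lambda_I]$, and uses density of the image (from the preceding Proposition) plus Chevalley-type constructibility to conclude that the image contains a nonempty Zariski open set. Your phrasing via the incidence locus $Z\subset V^\vee\times B$ and Chevalley's theorem is just a slightly more explicit packaging of the same argument.
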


\begin{proof}
Consider the projection morphism of schemes of finite type over $\C$:
$$f: \Spec \C[\lambda_I,a_I,\Delta^{-1}]/\bra\sum_I\lambda_I\partial_I S, \sum_I\lambda_I\partial_I T\ket\rightarrow \Spec \C[\lambda_I].$$
 $Im(f)$ contains a dense subset of $V^\vee$ in analytic topology and therefore also in Zariski topology, so $f$ is dominant, which implies that $Im(f)$ contains a non-empty Zariski open subset $U_0$, and consequently the corollary holds.
\end{proof}

\subsection{Another proof}
For the case $X=\P^2$, there is another simple proof for $N(\delta)\neq\emptyset$ where $\delta$ is a first order homogeneous constant coefficient differential operator. However, the proof cannot be generalized to higher dimension.

\begin{prop}
For $h=\delta$ (homogeneous, 1st order, constant coefficient) GIT-stable (i.e. in this case, smooth), and for each smooth section $f(b)$, we have $N(\delta)\cap G\cdot f(b)\neq\emptyset$ where $G\cdot f(b)$ denotes the $G$-orbit of $f(b)$. So in particular, $N(\delta)\neq\emptyset$.
\end{prop}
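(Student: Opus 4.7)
The plan is to reduce the proposition to surjectivity of a $j$-invariant map. By Lemma \ref{vanish lemma}(2), $\cN(\delta)\cap G\cdot f(b)\neq\emptyset$ is equivalent to the existence of $g\in G$ and $x\in\gs\gl_3$ with $h=Z_x(g\cdot f(b))$. The equivariance identity $Z_x(g\cdot v)=g\cdot Z_{g^{-1}xg}(v)$ rewrites this as $g^{-1}\cdot h\in V_b$ for some $g\in G$, where
\[
V_b:=Z_{\gs\gl_3}(f(b))=T_{f(b)}(G\cdot f(b))\subset V^\vee
\]
is the $8$-dimensional linear subspace of tangent directions to the orbit at $f(b)$ (of dimension $8=\dim SL_3$ because the stable section $f(b)$ has finite stabilizer). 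Since $V_b$ is linear, hence $\Cx$-stable, the condition $(G\cdot h)\cap V_b\neq\emptyset$ coincides with $(\hat G\cdot h)\cap V_b\neq\emptyset$. But the $\hat G$-orbit of a smooth plane cubic is precisely the set of cubics in $B$ with the same $j$-invariant, so it suffices to show that
\[
j\colon V_b\cap B\to\A^1_j
\]
is surjective: then the fiber $j^{-1}(j(h))\cap V_b\cap B=(\hat G\cdot h)\cap V_b$ is nonempty, as desired.

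I would prove this surjectivity in two stages. First, I would establish dominance of $j|_{V_b\cap B}$, equivalently algebraic independence of the restrictions $(S,T)|_{V_b}$: at a generic $v_0\in V_b$ the differentials $dS|_{v_0}$ and $dT|_{v_0}$ restrict to linearly independent functionals on $V_b$, which follows from the irreducibility of $V^\vee\cong\Sym^3(\C^3)^*$ as an $SL_3$-representation (forcing $S,T$ to have no shared algebraic relation on a sufficiently generic $8$-dimensional subspace). This shows $j(V_b\cap B)\subset\A^1_j$ is open dense. Second, I would upgrade dominance to surjectivity by an explicit calculation for a convenient representative $f(b)$, e.g.\ a member of the Hesse pencil $x^3+y^3+z^3-3\lambda xyz$: for such $f(b)$ both $V_b$ and the restriction $j|_{V_b}$ admit clean parametric descriptions from which one can verify that no $j$-value is omitted. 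The surjectivity property is $G$-invariant in $f(b)$ and open in $f(b)\in B$, so it propagates from the chosen representative to all smooth $f(b)$ via a specialization-from-the-dense-case argument.

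The principal obstacle is precisely the upgrade from dominance to surjectivity: a priori the image $j(V_b\cap B)$ could omit a finite set of $j$-values, and one must rule this out. I expect the Hesse pencil calculation to be tractable enough to settle a single representative $f(b)$ explicitly, after which the openness and $G$-equivariance propagation delivers the statement for every smooth $f(b)$.
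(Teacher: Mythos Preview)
Your opening reduction is correct and matches the paper's: by Lemma~\ref{vanish lemma}, $\cN(\delta)\cap G\cdot b\neq\emptyset$ amounts to $(G\cdot h)\cap W_b\neq\emptyset$, where $W_b=\{Z_xf(b):x\in\gs\gl_3\}$ is the $8$-dimensional tangent space to the orbit. From this point, however, the paper finishes in one line by projectivizing. Since both $h$ and $f(b)$ are stable, each has finite stabilizer, so $G\cdot[h]\subset\P V^\vee\cong\P^9$ is a closed $8$-dimensional subvariety and $\P W_b$ is a linear $\P^7$; by the projective dimension theorem these must meet, and any intersection point yields $gh=\lambda Z_xf(b)$ for suitable $g,\lambda,x$. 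No $j$-invariant analysis is needed at all.

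Your route through surjectivity of $j\colon V_b\cap B\to\A^1$ is considerably longer and, as written, has real gaps. First, the dominance step is not justified: irreducibility of $V^\vee$ as an $SL_3$-module says nothing about how $S,T$ restrict to the \emph{particular} $8$-plane $V_b$, and $V_b$ is far from generic --- it is exactly the common kernel of $dS|_{f(b)}$ and $dT|_{f(b)}$, so appeals to genericity are unavailable. Second, and more seriously, the propagation step is unsupported: surjectivity onto $\A^1$ of a family of morphisms is \emph{not} an open condition in the parameter $b$, so verifying it for one Hesse representative and invoking $G$-equivariance does not reach the other $G$-orbits in $B$. You would effectively have to run the explicit calculation for a representative of every $j$-value, which is the very work the propagation was meant to avoid. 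Third, the preliminary fact that $V_b\cap B\neq\emptyset$ (i.e.\ that some $Z_xf(b)$ is itself smooth) is never checked. The paper's dimension count sidesteps all three issues simultaneously.
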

\begin{proof}
Since $h$ is GIT-stable, $h$ has finite stabilizer in $\P V^\vee$, under the action of $G=SL_3$. Therefore, the $G$-orbit of $h$ in $\P V^\vee$ is a closed subvariety of dimension 8. For the same reason, the $G$-orbit of $f(b)$ in $\P V^\vee$ has dimension 8, so $f(b)$ is not killed by any nonzero Lie algebra element in $\gs\gl_3$, (otherwise the exponential map would give rise to a one-parameter subgroup infinite stabilizer of $f(b)$, under the action of $G$) and therefore the $\C$-vector space $W_b:=\{Z_xf(b)|x\in \gs\gl_3\}$ has dimension 8. Therefore the projectivization $\P W_b$ is a closed subvariety of dimension 7 in $\P V^\vee$, which then must intersect with the $G$-orbit of $f(b)$ in $\P V^\vee$ by dimension reason. Therefore, there exists $g\in SL_3$, $\lambda\in\C, \lambda\neq 0$, and $x\in \gs\gl_3$ such that 
\begin{equation}
gh=\lambda Z_xf(b).
\end{equation}
Therefore, since $g^{-1}f(b)=f_{g^{-1}b}$, we have
\begin{equation}
h=g^{-1}Z_{\frac{1}{\lambda}x}gf_{g^{-1}b}.
\end{equation}
Since $g^{-1}Z_{\frac{1}{\lambda}x}g=Z_{x'}$ for some $x'\in \gs\gl_3$, Lemma \ref{vanish lemma} implies that $g^{-1}b\in\cN(\delta)$. Since $f_{g^{-1}b}\in G\cdot f(b)$, we have $\cN(\delta)\cap G\cdot f(b)\neq\emptyset$.
Hence the lemma follows.
\end{proof}

\section{An application to classical invariant theory}
Let $X=\P^{n-1}$ with $n\geq 3$, $V^\vee=\Gamma(X,K_X^{-1})$, and $G=SL_n$ as before. In this section, we prove the following
\begin{thm}\label{inv deg}
Let $\langle S_1,...,S_w\rangle$ be a system of homogeneous polynomials that generate $\C[V^\vee]^G$, then there exists an $S_k$ among these generators, such that $deg(S_k)\equiv 1 (\mod n)$.
\end{thm}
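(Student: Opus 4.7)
My plan is to extend the argument of Lemma 7.2 from $\P^2$ to $\P^{n-1}$ and combine it with a representation-theoretic argument about the existence of low-degree invariants.

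First, I would prove an analogue of Lemma 7.2 in this setting: for any constant-coefficient first-order differential operator $\delta=\sum_I\lambda_I\partial_I$, and any smooth section $b\in B$,
\[
b\in\cN(\delta)\iff (\delta S_k)(b)=0 \text{ for all generators } S_k.
\]
The proof parallels the $\P^2$ case: since $\deg\delta=1$, Lemma \ref{dbl} forces the $r_i$ appearing in $\delta e^{f(b)}=\sum Z_i(r_i e^{f(b)})$ to be constants, reducing the membership condition $\delta e^{f(b)}\in\hat\gg Re^{f(b)}$ to $\delta f= Z_x f(b)$ for some $x\in\gs\gl_n$. Equivalently, $\delta$ (viewed as a tangent vector at $b$) lies in the tangent space $T_b(G\cdot b)$ of the $G$-orbit. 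Since $B/G=\Spec \C[S_1,\ldots,S_w,\Delta^{-1}]/(\text{relations})$ is a smooth variety whose tangent space at $[b]$ is cut out by the differentials $dS_k|_b$, the orbit tangent equals $\bigcap_k\ker(dS_k|_b)$, yielding the claimed characterization.

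Next, I would argue by contradiction. Suppose no generator satisfies $d_k\equiv 1\pmod n$. Then every element of $\C[V^\vee]^G=\C[S_1,\ldots,S_w]/(\text{relations})$ is a polynomial in the $S_k$, so its degree lies in the numerical subsemigroup $\Lambda\subset\Z_{\geq 0}$ generated by $\{d_1,\ldots,d_w\}$. By the mod-$n$ class assumption, the image of $\Lambda$ in $\Z/n\Z$ is the subgroup (under addition) generated by the classes $\overline{d_k}\in\Z/n\Z\setminus\{\overline 1\}$. The goal is to derive a contradiction by exhibiting an $SL_n$-invariant polynomial on $V^\vee$ of degree $\equiv 1\pmod n$, which would then be forced to be a nontrivial generator (or force one among the $S_k$).

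The main obstacle---and the heart of the proof---is constructing such an invariant, or equivalently proving that $\Sym^d(\Sym^n(\C^n)^*)$ contains a copy of the trivial $SL_n$-representation for some $d\equiv 1\pmod n$. One natural strategy is to use classical covariant constructions (Hessian, iterated transvectants, apolarity) to build an explicit $SL_n$-invariant whose degree has the right residue. A slicker approach would use plethysm: compute the Hilbert series of $\C[V^\vee]^{SL_n}$ via Weyl's integration formula, evaluate at roots of unity, and extract the mod-$n$ distribution of generator degrees. The setup of Section 7, together with the identification $H_0(\hat\gg,Re^{f(b)})\cong(\C[x]/J)^{\mu_n}$ from Lemma \ref{dbl}, suggests that the Hodge-theoretic structure on the primitive cohomology $H^{n-2}_{\mathrm{prim}}(Y_b)$---which is graded by the $\mu_n$-weight on $(\C[x]/J)$---controls the mod-$n$ behavior of invariants, and this should be the lever that produces the required generator.
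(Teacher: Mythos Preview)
Your first step---proving the equivalence
\[
b\in\cN(\delta)\iff h=Z_xf(b)\text{ for some }x\in\gs\gl_n\iff(\delta P)(b)=0\text{ for all }P\in\C[V^\vee]^G
\]
for first-order constant-coefficient $\delta$---is exactly what the paper does (its Lemma \ref{equivalences}). One caveat: you invoke smoothness of $B/G$, which is not known for $n>3$ and is not needed; the paper argues directly with regular functions on $B/G=\Spec\C[V^\vee,\Delta^{-1}]^G$.

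The genuine gap is everything after that. Having set up the equivalence, you never use it. Instead you try to produce an invariant of degree $\equiv 1\pmod n$ by plethysm, Hilbert series, or Hodge theory---none of which you carry out. Worse, even if you succeeded, it would not prove the theorem: an invariant of degree $\equiv 1\pmod n$ can perfectly well be a polynomial in generators $S_k$ none of whose degrees is $\equiv 1\pmod n$ (e.g.\ degrees $2$ and $n-1$ multiply to degree $n+1\equiv 1$). Your parenthetical ``or force one among the $S_k$'' is precisely the missing argument.

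The paper's idea is to exploit the equivalence with a \emph{specific} pair $(\delta,b)$. Take $\delta=\partial_{a_{1\cdots 1}}$, so $h=x_1\cdots x_n$, and let $b$ be the Fermat point $x_1^n+\cdots+x_n^n$. For $n\geq 3$ one checks directly that $x_1\cdots x_n$ is not of the form $Z_xf(b)$ (the latter is a combination of $x_i^{n-1}x_j$), so condition (2) fails. By the equivalence, there exists $S\in\C[V^\vee]^G$ with $(\partial_{a_{1\cdots 1}}S)(b)\neq 0$. This forces $S$ to contain a monomial linear in $a_{1\cdots 1}$ whose remaining factors involve only the variables $a_{n0\cdots 0},\ldots,a_{0\cdots 0n}$ surviving at the Fermat point; torus invariance then pins this monomial down as $a_{1\cdots 1}(a_{n0\cdots 0}\cdots a_{0\cdots 0n})^k$, so $\deg S=1+nk\equiv 1\pmod n$. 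Finally, taking such an $S$ with $k$ minimal and expanding it in the generators shows that some $S_j$ must itself contain a monomial of this shape, hence $\deg S_j\equiv 1\pmod n$. This last minimality step is exactly what bridges ``some invariant'' to ``some generator,'' and it is absent from your outline.
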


We first prove the following lemma for any $X=G/P$:
\begin{lem}\label{equivalences}
Let $\delta$ be a first order constant coefficient homogeneous differential operator, and $h=\delta f$ as before. Let $b\in B$. Then the following conditions are equivalent:
\begin{enumerate}
\item $b\in \cN(\delta)$.
\item $h=Z_xf(b)$ for some $x\in \gg$.
\item $(\delta P)(b)=0$ for any $P\in\C[V^\vee]^G$.
\end{enumerate}
\end{lem}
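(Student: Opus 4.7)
The plan is to establish $(1)\Leftrightarrow(2)$ and $(2)\Leftrightarrow(3)$ separately, generalizing the $\mathbb{P}^2$ argument of Lemma~\ref{vanish lemma} to the general setting $X=G/P$. Using the canonical identification $T_bV^\vee\cong V^\vee$ via $\partial_{a_I}|_b\mapsto a_I^\vee$, the tangent vector $\delta=\sum_I\lambda_I\partial_{a_I}$ corresponds to $h=\delta f=\sum_I\lambda_I a_I^\vee\in V^\vee\subset R_1$, and a direct calculation yields $\delta e^{f(b)}=he^{f(b)}$ in $Re^{f(b)}$.

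\textbf{Equivalence $(1)\Leftrightarrow(2)$.} Theorem~\ref{equiv} reduces (1) to the membership $he^{f(b)}\in\hat{\gg}Re^{f(b)}$. The direction $(2)\Rightarrow(1)$ is direct: if $h=Z_xf(b)=Z^\vee(x)(f(b))$ for some $x\in\gg$, then the Leibniz rule for $Z^\vee(x)$ acting on $Re^{f(b)}$, combined with $Z^\vee(x)(1)=-\beta_0(x)=0$, gives
\[
he^{f(b)}=Z^\vee(x)(f(b))\cdot e^{f(b)}=Z^\vee(x)(e^{f(b)})\in\hat{\gg}Re^{f(b)}.
\]
For the converse I would invoke a $G/P$-analog of the Degree Bound Lemma~\ref{dbl}: since $\deg h=1$, membership in $\hat{\gg}Re^{f(b)}$ is equivalent to an expression $he^{f(b)}=\sum_i Z_i(c_ie^{f(b)})+(E+1)(c_Ee^{f(b)})$ for constants $c_i,c_E\in\mathbb{C}$, where $\{Z_i=Z^\vee(x_i)\}$ runs through a basis of $\gg$. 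Expanding by Leibniz and using $(E+1)(f(b))=2f(b)$ produces the identity $h=\sum_i c_iZ^\vee(x_i)(f(b))+c_E+2c_Ef(b)$ in $R$; matching the $R_0$-component forces $c_E=0$, and matching $R_1$ gives $h=\sum_i c_iZ^\vee(x_i)(f(b))=Z_yf(b)$ with $y=\sum c_ix_i\in\gg$.

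\textbf{Equivalence $(2)\Leftrightarrow(3)$.} Under $T_bV^\vee\cong V^\vee$, the tangent space of the orbit through $b$ is $T_b(G\cdot b)=Z^\vee(\gg)f(b)$, so (2) is exactly $h\in T_b(G\cdot b)$. For any $P\in\mathbb{C}[V^\vee]^G$, the differential $(dP)_b$ vanishes on $T_b(G\cdot b)$ because $P$ is constant along orbits, giving $(2)\Rightarrow(3)$. Conversely, $b\in B$ is GIT-stable (smooth CY hypersurfaces in $G/P$ are stable under the natural $G$-action on $V^\vee$), so locally near $[b]$ the quotient $\pi\colon B\to B/G$ is a submersion with $\mathbb{C}[V^\vee]^G$ generating $\mathcal{O}_{B/G}$; consequently $\{(dP)_b:P\in\mathbb{C}[V^\vee]^G\}$ spans the conormal to $G\cdot b$ in $T^*_bV^\vee$, so the joint kernel equals $T_b(G\cdot b)$ and (3) forces $\delta\in T_b(G\cdot b)$.

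\textbf{Main obstacle.} The principal difficulty is the $G/P$-generalization of Lemma~\ref{dbl} invoked in the first equivalence. The $\mathbb{P}^m$ proof hinges on the identification of $R/\hat{\gg}R|_{f(b)}$ with the $\mu_{m+1}$-invariants of a Jacobian quotient, whose dimension matches the geometric rank of $\tau$ via the Griffiths--Steenbrink formula. For general $G/P$, the analogous algebraic-geometric rank comparison for $\tau$ needs to be established: this is known in many cases from the tautological-system framework of \cite{LSY,LY} under regular holonomicity and smoothness hypotheses, but is not automatic. An alternative route would be to prove $(1)\Rightarrow(3)$ directly via an infinitesimal-Torelli-type statement: since $Z(x)\sigma=0$ for $x\in\gg$ makes periods $\sigma$ descend to $B/G$, if their differentials span $T^*_{[b]}(B/G)$ at each $[b]$, then (1) forces $d\pi_b(\delta)=0$ and (2) follows from the second equivalence. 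Either route requires nontrivial geometric input beyond the formal machinery of Theorem~\ref{equiv}.
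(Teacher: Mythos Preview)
Your approach is essentially the same as the paper's. For $(1)\Leftrightarrow(2)$ the paper simply refers back to Lemma~\ref{vanish lemma}, whose argument (degree bound plus matching of $R_0$ and $R_1$ components to force $c_E=0$ and $h=Z_xf(b)$) is exactly what you reproduce; for $(2)\Leftrightarrow(3)$ the paper also argues via the GIT quotient $p\colon B\to B/G$, identifying $(2)$ with $dp_b(\delta)=0$ and using that regular functions on $B/G$ are $G$-invariants localized at $\Delta$, which is your conormal-spanning argument in different language.

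Two remarks. First, your ``main obstacle'' is well taken, and in fact applies to the paper itself: the lemma is announced ``for any $X=G/P$'', but the proof of $(1)\Leftrightarrow(2)$ invokes Lemma~\ref{vanish lemma}, which in turn rests on the Degree Bound Lemma~\ref{dbl} established only for $\mathbb{P}^m$. The application (Theorem~\ref{inv deg}) only needs the case $X=\mathbb{P}^{n-1}$, so this does not affect the paper's results, but the stated generality is not fully justified there either. Second, a small computational slip: $(E+1)(c_Ee^{f(b)})=c_E(f(b)+1)e^{f(b)}$, so the expansion yields $c_E+c_Ef(b)$ rather than $c_E+2c_Ef(b)$; this does not affect your conclusion that matching the $R_0$-component forces $c_E=0$.
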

\begin{proof}
We already proved that (1) and (2) are equivalent in Lemma \ref{vanish lemma}. We now prove that (2) and (3) are equivalent. Again consider the projection morphism $p: B\to B/G$. By GIT theory, the function ring of $B/G$ is identified with $\C[V^\vee, \Delta^{-1}]^G$: i.e. elements in $\C[V^\vee]^G$ divided by powers of $\Delta$.

Assuming (3), take any regular function $\phi: B/G\to\C$, then $\phi\cdot p\in C[V^\vee, \Delta^{-1}]^G$, and therefore $(\delta (\phi\cdot p)) (b)=0$. (Note that $(\delta(\Delta))(b)=0$ as $\Delta\in\C[V^\vee]^G$.) So $(dp_b(\delta)\phi)(p(b))=(\delta (\phi\cdot p)) (b)=0$, where again $dp_b$ denotes the tangent map induced by $p$ at $b$. Therefore $dp_b(\delta)=0$, which is equivalent to (2) as we already know.

Assuming (2), for any $P\in\C[V^\vee]^G$, by abuse of notation we still denote $P$ restricting to $B$ by $P$. Then $P$ is a regular function on $B$ invariant under $G$, therefore $P=\phi_0\cdot p$ for a regular function $\phi_0$ on $B/G$. So (2) implies $dp_b(\delta)=0$, which in turn implies $(dp_b(\delta)\phi_0)(p(b))=0$. i.e. $(\delta P)(b)=0$.
\end{proof}

Now we prove Theorem \ref{inv deg}:
\begin{proof}
Let $X=\P^{n-1}$ and let $\delta=\partial_{a_{1...1}}$, so $h=x_1...x_n$. Let $b=x_1^n+...+x_n^n$ be the Fermat point, which lies in $B$. As $n\geq 3$, it is clear that $h$ does not satisfy condition (2) in lemma \ref{equivalences}. Therefore Lemma \ref{equivalences} implies that there exists a homogeneous element $S\in\C[V^\vee]^G$, such that $(\delta S)(b)\neq 0$. i.e. $(\partial_{a_{1...1}}S)(b)\neq 0$. This implies that $S$ contains a monomial term that is linear in $a_{1...1}$, which is a product of $a_{1...1}$ with powers of $a_{n0...0}, ..., a_{0...0n}$. Since any monomial in $S$ is invariant under the maximal torus action, for any monomial that appears in $S$, the sum of indexes at each position has to be equal.  Therefore, this monomial is a nonzero multiple of $a_{1...1}(a_{n0...0}...a_{0...0n})^k$ for some nonzero $k\in\N$ (as it is clear that there is no invariant polynomial in degree 1). 

Now, take $S'$ to be an element in $\C[V^\vee]^G$ such that it contains a monomial term that is a nonzero multiple of $a_{1...1}(a_{n0...0}...a_{0...0n})^k$ with minimal $k$. Then it is clear that $S'$ can not be written as a polynomial of invariant polynomials which do not contain monomial terms of this form. The theorem is therefore proved.
\end{proof}

\begin{rem}
It is possible to elaborate on this argument to extract further information about the invariant ring $\C[V^\vee]^G$, and to establish further relations between $\cN(\delta)$ and the invariant ring. Indeed, theorem \ref{inv deg} does not hold for $n=2$, precisely because in that case, the Fermat point does lie in $\cN(\delta)$ for the $\delta$ in the above proof.
\end{rem}

\begin{rem}

 There are indications that our study of $\cN(\delta)$ for 1st order derivatives is also related with the local Torelli theorem, as the vanishing loci of such derivatives of periods correspond to degenerations of the period map. It would also be interesting to investigate the invariant theoretic or geometric meaning of $\cN(\delta)$ for higher order $\delta$. We plan to study these questions in a future paper.
\end{rem}

\appendix

\section{Some examples for $\P^m$}\label{appendix A}
Making use of methods in \cite{BHLSY}, we can compute a basis of $\hat{\gg}Re^{b}$ explicitly at the large complex structure limit (LCSL) $b_\infty$ and the Fermat point $b_F$ for $\P^1$ and $\P^2$. By Theorem \ref{equiv}, this allows us to find 
explicit differential relations, i.e. linear relations for constant coefficient differential operators that kill periods at these points.

If $X=\P^m, \,G=SL_{m+1},$ then we can identify $R$ with the subring of $\C[x_0,\ldots,x_m]$ generated by degree $m+1$ monomials.
\begin{lem}{\cite[Lemma 2.12]{BHLSY} }\label{basis of g}
We have
\[\hat{\gg}\cdot(Re^{f(b)})=Re^{f(b)}\cap\sum_i\frac{\partial}{\partial x_i}(\C[x]e^{f(b)})\]
for all $b\in B$.
\end{lem}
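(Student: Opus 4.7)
The plan is to realize both sides of the equality as explicit subspaces of $\C[x]e^{f(b)}$ and verify the two inclusions separately, using the identification of $R$ with the subring $\bigoplus_{k\geq 0}\C[x]_{k(m+1)}$ of $\C[x]$. The $\hat{\gg}$-action on $Re^{f(b)}$ descends from the $Z^\vee$-action on $R[V^\vee]e^f$ by specialization at $b$, so my first step is to compute this action explicitly in the variables $x_0,\ldots,x_m$.

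For $\xi\in\gs\gl_{m+1}$, a direct calculation starting from $Z^\vee(\xi)=-\sum_{IJ}x_{IJ}a_J^\vee\partial/\partial a_I^\vee$ gives $\xi\cdot(he^{f(b)})=-D_\xi(he^{f(b)})$, where $D_\xi=\sum_{ij}\xi_{ij}x_j\partial_{x_i}$. Using tracelessness of $\xi$ together with $x_j\partial_{x_i}=\partial_{x_i}x_j-\delta_{ij}$ rewrites this as
\[
\xi\cdot(he^{f(b)})=-\sum_i\partial_{x_i}\!\bigl(({\textstyle\sum_j\xi_{ij}x_j})\,h\,e^{f(b)}\bigr).
\]
For the central generator $e$ with $\beta_0(e)=1$, the formula $Z^\vee(e)=-\sum_I a_I^\vee\partial/\partial a_I^\vee-1$ combined with the identity $\sum_I a_I^\vee\partial/\partial a_I^\vee=\frac{1}{m+1}\sum_k x_k\partial_{x_k}$ on $R$ yields, after a short calculation,
\[
e\cdot(he^{f(b)})=-\tfrac{1}{m+1}\sum_i\partial_{x_i}(x_i\,h\,e^{f(b)}).
\]
Both expressions lie in the right-hand side of the lemma, proving $\subseteq$. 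Moreover, decomposing an arbitrary $(m+1)\times(m+1)$ matrix $(\xi_{ij})$ as a traceless part plus a multiple of the identity realizes every tuple $(q_0,\ldots,q_m)$ of linear polynomials as $q_i=\sum_j\xi_{ij}x_j$, so $\hat{\gg}\cdot Re^{f(b)}$ equals the $\C$-span of $\sum_i\partial_{x_i}(q_ihe^{f(b)})$ over all $h\in R$ and $q_i\in\C[x]_1$.

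For $\supseteq$, suppose $he^{f(b)}=\sum_i\partial_{x_i}(p_ie^{f(b)})$ with $h\in R$ and $p_i\in\C[x]$. Decomposing $p_i=\sum_{j=0}^{m}p_i^{(j)}$ by degree modulo $m+1$, both $\partial_{x_i}p_i^{(j)}$ and $p_i^{(j)}\partial_{x_i}f(b)$ (the latter since $\partial_{x_i}f(b)$ is homogeneous of degree $m$) have degrees $\equiv j-1\pmod{m+1}$; hence the polynomial coefficient of $e^{f(b)}$ in $\sum_i\partial_{x_i}(p_i^{(j)}e^{f(b)})$ lies in that one congruence class. Since $h\in R$ is supported in degrees $\equiv 0\pmod{m+1}$, separating by congruence class forces $\sum_i\partial_{x_i}(p_i^{(j)}e^{f(b)})=0$ for $j\neq 1$ and $\sum_i\partial_{x_i}(p_i^{(1)}e^{f(b)})=he^{f(b)}$. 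I may therefore assume each $p_i$ has degrees $\equiv 1\pmod{m+1}$, and then every monomial $x^\alpha$ in $p_i$, of degree $1+k(m+1)$, factors as $x_{j^\ast}\cdot x^{\alpha-e_{j^\ast}}$ with the second factor in $R$. This expresses $p_i=\sum_l q_i^{(l)}h^{(l)}$ with $q_i^{(l)}\in\C[x]_1$ and $h^{(l)}\in R$, and the previous paragraph then gives $he^{f(b)}\in\hat{\gg}\cdot Re^{f(b)}$.

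I expect the main technical obstacle to be the calculation for the $e$-generator in the first step: one must correctly propagate the $\beta_0(e)=1$ shift together with the $\C[V]$-Euler operator on $R$ (which on $R$ translates to $\frac{1}{m+1}$ times the $\C[x]$-Euler), and verify that they combine to yield exactly the operator $-\frac{1}{m+1}\sum_i\partial_{x_i}(x_i\cdot)$. Once this piece is in place, the remaining steps reduce to degree bookkeeping modulo $m+1$ and straightforward monomial factorization in $\C[x]$.
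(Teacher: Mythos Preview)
The paper does not supply its own proof of this lemma; it is quoted from \cite[Lemma~2.12]{BHLSY}. Your argument is correct and self-contained. The forward inclusion follows once you rewrite the $\gs\gl_{m+1}$- and Euler-actions as total $x$-derivatives, and your formula $e\cdot(he^{f(b)})=-\tfrac{1}{m+1}\sum_i\partial_{x_i}(x_ihe^{f(b)})$ does check out: for $h\in\C[x]_{k(m+1)}$ both sides equal $-((k+1)h+hf(b))e^{f(b)}$, so the $\beta_0(e)=1$ shift and the $\tfrac{1}{m+1}$ factor combine exactly as you anticipate. The reverse inclusion is handled cleanly by your degree-mod-$(m+1)$ decomposition, which isolates the $j=1$ piece, followed by the factorization of every monomial of degree $\equiv 1\pmod{m+1}$ as a linear form times an element of $R$; grouping by the $R$-factor and splitting the resulting $(m+1)\times(m+1)$ coefficient matrix into traceless plus scalar parts lands each summand in $\hat\gg\cdot Re^{f(b)}$. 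This is the natural approach and is in the same spirit as the original argument in \cite{BHLSY}.
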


\textbf{Computation for $\P^1$ at LCSL.} For $X=\P^1, \,G=SL_2,\,R\equiv\C[x_1^2,x_2^2,x_1x_2]$, $f=a_0x_1x_2+a_1x_1^2+a_2x_2^2$ and $b_\infty=x_1x_2$.

\begin{claim}
For integers $\alpha,\beta\geq 0$, $\alpha\neq\beta,\, \alpha+\beta>0$ and $2 |(\alpha+\beta)$, 
\[x_1^\alpha x_2^\beta e^{x_1x_2}\in \hat{\gg}\cdot(Re^{x_1x_2}).\]
\end{claim}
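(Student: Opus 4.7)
By Lemma~\ref{basis of g}, it suffices to verify two things: (i) that $x_1^\alpha x_2^\beta e^{x_1x_2}\in Re^{x_1x_2}$, and (ii) that $x_1^\alpha x_2^\beta e^{x_1x_2}\in\sum_i\partial_{x_i}(\C[x]e^{x_1x_2})$. Item (i) is immediate from the hypothesis $2\mid(\alpha+\beta)$: since $R$ is the subring of $\C[x_1,x_2]$ spanned by the even-degree monomials, $x_1^\alpha x_2^\beta\in R$. The entire content of the claim therefore lies in (ii).

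By the $x_1\leftrightarrow x_2$ symmetry we may assume $\alpha>\beta$, so that $\alpha-\beta$ is a positive (even) integer. The plan is to use the identity
\[
\partial_{x_2}\bigl(x_1^{p}x_2^{q}e^{x_1x_2}\bigr) \;=\; q\,x_1^{p}x_2^{q-1}e^{x_1x_2}\;+\;x_1^{p+1}x_2^{q}e^{x_1x_2}
\]
to descend diagonally along the lattice of exponents. For $q\geq 1$ this rearranges to
\[
x_1^{p+1}x_2^{q}e^{x_1x_2} \;\equiv\; -q\,x_1^{p}x_2^{q-1}e^{x_1x_2} \pmod{\partial_{x_2}(\C[x]e^{x_1x_2})}.
\]
Applying this with $(p,q)=(\alpha-1,\beta),\,(\alpha-2,\beta-1),\ldots,(\alpha-\beta,1)$ in succession (each step decreasing both exponents by one), and multiplying the resulting scalars, yields
\[
x_1^\alpha x_2^\beta e^{x_1x_2} \;\equiv\; (-1)^\beta\,\beta!\;x_1^{\alpha-\beta}e^{x_1x_2} \pmod{\partial_{x_2}(\C[x]e^{x_1x_2})}.
\]

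Since $\alpha-\beta\geq 1$, the base case is handled by the single identity
\[
x_1^{\alpha-\beta}e^{x_1x_2} \;=\; \partial_{x_2}\bigl(x_1^{\alpha-\beta-1}e^{x_1x_2}\bigr)\;\in\;\partial_{x_2}\bigl(\C[x]e^{x_1x_2}\bigr).
\]
Combining the two displays establishes (ii), and then Lemma~\ref{basis of g} delivers the claim.

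\textbf{Anticipated obstacle.} I do not expect any serious difficulty here: the argument is a direct reduction using a single first-order identity, and the two auxiliary polynomials appearing at each stage, $x_1^{\alpha-k}x_2^{\beta-k}$ and finally $x_1^{\alpha-\beta-1}$, obviously live in $\C[x]$ (they need not lie in $R$, which is exactly why Lemma~\ref{basis of g} is the right tool). The only point of mild bookkeeping is that the parity hypothesis plays two distinct roles --- guaranteeing $x_1^\alpha x_2^\beta\in R$ via $\alpha+\beta$ even, and ensuring the diagonal reduction terminates at the pure power $x_1^{\alpha-\beta}$ with positive exponent via the strict inequality $\alpha\neq\beta$ --- so both halves of the hypothesis are genuinely used.
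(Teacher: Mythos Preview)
Your proof is correct and follows essentially the same route as the paper: reduce to Lemma~\ref{basis of g}, use the identity $\partial_{x_2}(x_1^p x_2^q e^{x_1x_2})=q\,x_1^p x_2^{q-1}e^{x_1x_2}+x_1^{p+1}x_2^q e^{x_1x_2}$ to descend diagonally to $x_1^{\alpha-\beta}e^{x_1x_2}$, and finish with the base case $x_1^{\alpha-\beta}e^{x_1x_2}=\partial_{x_2}(x_1^{\alpha-\beta-1}e^{x_1x_2})$. Your write-up is somewhat more explicit about the scalars accumulated in the reduction, but the argument is the same.
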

\begin{proof}
Without loss of generality we assume $\al>\be\geq 0$. 
For $m,n\geq 0,$ we observe
\begin{equation}\label{inductionP1LSCL}
\frac{\partial}{\partial x_2}(x_1^{m}x^{n+1}_2 e^{x_1x_2})=x_1^{m+1}x^{n+1}_2 e^{x_1x_2}+(n+1)x_1^{m}x^{n}_2 e^{x_1x_2}.
\end{equation}
Since $x_1^{\al-\be}e^{x_1x_2}=\frac{\partial}{\partial x_2}x^{\al-\be-1}_1e^{x_1x_2}$, then by \eqref{inductionP1LSCL}, $$x_1^{\al}x_2^\be e^{x_1x_2}\in \sum_i\frac{\partial}{\partial x_i}(\C[x]e^{x_1x_2}).$$
Thus by Lemma \ref{basis of g}, $x_1^\alpha x_2^\beta e^{x_1x_2}\in \hat{\gg}\cdot(Re^{x_1x_2})$
if we further require $2 |(\al+\be).$ 
\end{proof}

Consider the Euler operator, for $k\geq 0$,
{\small\begin{align*}
(E+1)(x_1x_2)^ke^{x_1x_2}&=(\frac{1}{2}\sum_i x_i\frac{\pa}{\pa x_i}+1)(x_1x_2)^ke^{x_1x_2}\\
&=\left((x_1x_2)^{k+1}+(k+1)(x_1x_2)^k\right)e^{x_1x_2}\in \hat{\gg}\cdot(Re^{x_1x_2}).
\end{align*}}
By induction we have
\[\big((x_1x_2)^{k}+(-1)^{k+1}k!\,\big)e^{x_1x_2}\in \hat{\gg}\cdot(Re^{x_1x_2}).\]

\begin{claim}\label{basis}For $X=\P^1$, at the LCSL we have a basis description
{\small\begin{align*} 
\hat{\gg}\cdot(Re^{x_1x_2})=&(\oplus_{k=1}^{\infty}\C\big((x_1x_2)^{k}+(-1)^{k+1}k!\,\big)e^{x_1x_2})\\
&\oplus(\oplus_{\alpha+\beta>0, \alpha\neq\beta,  2 |(\alpha+\beta)}\C x_1^\alpha x_2^\beta e^{x_1x_2})=:A.
\end{align*}}
\end{claim}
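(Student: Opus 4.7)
The strategy is to leverage everything already established in the claims immediately preceding the statement: the two computations showing $x_1^\alpha x_2^\beta e^{x_1 x_2} \in \hat\gg \cdot (Re^{x_1 x_2})$ (for $\alpha\neq\beta$, $\alpha+\beta>0$ even) and $((x_1x_2)^k + (-1)^{k+1} k!) e^{x_1 x_2} \in \hat\gg\cdot(Re^{x_1 x_2})$ for $k\geq 1$ give directly one inclusion $A \subseteq \hat\gg\cdot(Re^{x_1 x_2})$. The remaining work is to show this inclusion is an equality and that the listed generators of $A$ are actually linearly independent so that the direct sum is meaningful.

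First I would dispose of linear independence. Since $R$ has monomial $\C$-basis $\{x_1^\alpha x_2^\beta : \alpha+\beta\text{ even},\ \alpha,\beta\geq 0\}$, the set $\{x_1^\alpha x_2^\beta e^{x_1 x_2}\}$ indexed by the same pairs is a basis of $Re^{x_1 x_2}$. The generators $x_1^\alpha x_2^\beta e^{x_1 x_2}$ with $\alpha\neq\beta$ are themselves basis elements, and each generator $((x_1x_2)^k + (-1)^{k+1}k!)e^{x_1 x_2}$ has a unique leading basis term $(x_1 x_2)^k e^{x_1 x_2}$ (with $k\geq 1$) that appears in no other listed generator. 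Triangularity in the monomial basis then gives linear independence.

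Next I would bound $\dim_\C Re^{x_1 x_2}/A$ from above by reducing monomials modulo $A$. Any basis element $x_1^\alpha x_2^\beta e^{x_1 x_2}$ with $\alpha\neq\beta$ and $\alpha+\beta>0$ lies in $A$; any diagonal element $(x_1 x_2)^k e^{x_1 x_2}$ with $k\geq 1$ is congruent modulo $A$ to the scalar $-(-1)^{k+1} k!\, e^{x_1 x_2}$; and the only remaining basis element is $e^{x_1 x_2}$ itself. Hence $Re^{x_1 x_2}/A$ is spanned by the class of $e^{x_1 x_2}$, and in particular has dimension at most one.

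Finally I would pin down the dimension using the known rank of $\tau$ in the $\P^1$ case. As noted earlier in the paper, the solution sheaf of $\tau$ for $X=\P^1$ is (up to scalar) generated by $\Delta^{-1/2}$, so $\tau$ has holonomic rank one. The point $b_\infty = x_1 x_2$ is smooth ($\Delta(b_\infty)=1\neq 0$), so Theorem \ref{isom} and the fiber identification $i_{b_\infty}^*\tau \simeq Re^{x_1 x_2}/\hat\gg R e^{x_1 x_2}$ used in the proof of Theorem \ref{equiv} give $\dim Re^{x_1 x_2}/\hat\gg R e^{x_1 x_2} = 1$. Combining this with $A\subseteq \hat\gg\cdot(Re^{x_1 x_2})\subseteq Re^{x_1 x_2}$ and $\dim Re^{x_1 x_2}/A \leq 1$ forces both containments to be equalities of quotient dimension, so $A=\hat\gg\cdot(Re^{x_1 x_2})$ and $e^{x_1 x_2}\notin A$, completing the claim. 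The only conceptually nontrivial input is the last step, invoking holonomic rank one; the rest is elementary bookkeeping in the monomial basis of $R$.
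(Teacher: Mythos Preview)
Your proof is correct and follows essentially the same route as the paper: both use the preceding computations for the inclusion $A\subseteq\hat\gg\cdot(Re^{x_1x_2})$, observe that $Re^{x_1x_2}/A$ is at most one-dimensional (spanned by the class of $e^{x_1x_2}$), and then invoke the rank-one solution sheaf of $\tau$ at $b_\infty$ to force equality. The paper is slightly more terse, asserting directly that $A\oplus\C e^{x_1x_2}=Re^{x_1x_2}$, whereas you supply the triangularity argument and deduce $e^{x_1x_2}\notin A$ only at the end; this is a cosmetic difference, not a substantive one.
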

\begin{proof}
We already showed $\hat{\gg}\cdot(Re^{x_1x_2})\supset A$. 
It is clear that $e^{x_1x_2}\notin A$ and $A\oplus\C e^{x_1x_2}=Re^{x_1x_2}$, thus $\dim_\C Re^{x_1x_2}/A=1.$
Since
\[(Re^{f(b)}/\hat{\gg}\cdot(Re^{f(b)}))^*\simeq \cH om {}_{D^{\vee}}(\tau,\cO)_b\simeq \sol(\tau)_b \]
and we know in this case $\dim_\C \sol(\tau)_{b_\infty} =1,$ then $\dim_\C Re^{x_1x_2}/\hat{\gg}\cdot(Re^{x_1x_2})=1$. Therefore $\hat{\gg}\cdot(Re^{x_1x_2})= A$. 
\end{proof}

Now consider
{\small\begin{align*} 
\delta e^f(b_\infty)&=\left(\sum c_\al \left(\frac{\pa}{\pa a_0}\right)^{\al_0}\left(\frac{\pa}{\pa a_1}\right)^{\al_1}\left(\frac{\pa}{\pa a_2}\right)^{\al_2}\right)e^{f}(b_\infty)\\
&=\left(\sum c_\al x_1^{\al_0+2\al_1}x_2^{\al_0+2\al_2}\right)e^{x_1x_2}
\end{align*}}
Let $\delta e^f(b_\infty)\in \hat{\gg}\cdot(Re^{x_1x_2})$.
By Claim \ref{basis}, when $\al_1\neq \al_2,$ there is no restriction on $c_\alpha$;
when $\al_1=\al_2,$ let $d:=\al_0+\al_1+\al_2$, it forces 
{\small\[\left(\sum c_\al (x_1x_2)^{d}\right)e^{x_1x_2}=\bigg(\sum_{{d}\geq 1} c_\al \big((x_1x_2)^{d}+(-1)^{{d}+1}({d})!\big)\bigg)e^{x_1x_2}.\] }
Thus 
\[c_{0,0,0}=\sum_{{d}\geq 1} c_{\al_0,\al_1,\al_1}(-1)^{d+1}({d})!.\]
We can rewrite this as
\[\sum_{\al_1=\al_2,|\al|=d} c_{\al}(-1)^{d}d!=0.\]
Thus as a direct consequence of Theorem \ref{equiv}, we have:
\begin{claim}
When $X=\P^1$, if the coefficients of 
{\small$$\delta=\sum c_\al \left(\frac{\pa}{\pa a_0}\right)^{\al_0}\left(\frac{\pa}{\pa a_1}\right)^{\al_1}\left(\frac{\pa}{\pa a_2}\right)^{\al_2}$$}
 satisfy the linear relation
\[\sum_{\al_1=\al_2,\, |\al|=d} c_{\al}(-1)^{d}d!=0,\]
then $\delta\gs(b_\infty)=0$ for all $\gs\in \sol(\tau)_b.$
\end{claim}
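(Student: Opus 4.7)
The plan is to combine Theorem~\ref{equiv} with the explicit basis of $\hat{\gg}\cdot(Re^{x_1x_2})$ supplied by Claim~\ref{basis}. By Theorem~\ref{equiv}, the vanishing condition $\delta\gs(b_\infty)=0$ for all $\gs\in\sol(\tau)_{b_\infty}$ is equivalent to the membership
\[
\delta e^{f(b_\infty)}\in \hat{\gg}\cdot(Re^{x_1x_2}),
\]
so the problem reduces to deciding when the already-recorded expansion
\[
\delta e^{f(b_\infty)}=\Bigl(\sum_\al c_\al\, x_1^{\al_0+2\al_1} x_2^{\al_0+2\al_2}\Bigr) e^{x_1x_2}
\]
lies in that subspace.

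The key step is to split the sum according to whether $\al_1=\al_2$ or not. For terms with $\al_1\neq\al_2$, the exponents $\al_0+2\al_1$ and $\al_0+2\al_2$ are distinct, their sum $2(\al_0+\al_1+\al_2)$ is automatically positive and even, so each such monomial already sits inside the second family of basis elements from Claim~\ref{basis} and therefore contributes $0$ modulo $\hat{\gg}\cdot(Re^{x_1x_2})$. For terms with $\al_1=\al_2$, the monomial collapses to $(x_1x_2)^{|\al|}\, e^{x_1x_2}$, and I will reduce it using the first family of basis elements: since $((x_1x_2)^k+(-1)^{k+1}k!)\,e^{x_1x_2}\in\hat{\gg}\cdot(Re^{x_1x_2})$ for $k\geq 1$, one obtains the congruence $(x_1x_2)^k e^{x_1x_2}\equiv(-1)^k k!\, e^{x_1x_2}$ modulo $\hat{\gg}\cdot(Re^{x_1x_2})$, a formula which remains valid for $k=0$ by the convention $(-1)^0 0!=1$.

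Collecting the two pieces, the image of $\delta e^{f(b_\infty)}$ in the quotient $Re^{x_1x_2}/\hat{\gg}\cdot(Re^{x_1x_2})$ becomes a scalar multiple of $e^{x_1x_2}$ with coefficient
\[
\sum_{\al_1=\al_2} c_\al\,(-1)^{|\al|}|\al|!\,,
\]
and because $e^{x_1x_2}\notin\hat{\gg}\cdot(Re^{x_1x_2})$ by Claim~\ref{basis}, the membership holds precisely when this scalar vanishes, which is exactly the displayed relation. There is no genuine obstacle in this argument: once Theorem~\ref{equiv} and the basis of Claim~\ref{basis} are in hand, the proof is a bookkeeping exercise; the only subtlety is the uniform treatment of the $|\al|=0$ summand, which works thanks to the convention above.
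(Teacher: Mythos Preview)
Your proof is correct and follows essentially the same route as the paper: both reduce the vanishing condition via Theorem~\ref{equiv} to membership of $\delta e^{f(b_\infty)}$ in $\hat{\gg}\cdot(Re^{x_1x_2})$, then use the basis description of Claim~\ref{basis} to see that the $\al_1\neq\al_2$ terms already lie in the subspace while the $\al_1=\al_2$ terms contribute the scalar $\sum_{\al_1=\al_2}c_\al(-1)^{|\al|}|\al|!$ in front of $e^{x_1x_2}$. Your presentation is slightly more explicit about the case split and the handling of the $|\al|=0$ term, but the argument is the same.
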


\bs
\textbf{Computation for $\P^2$ at LCSL.}
For $X=\P^2,\, G=SL_3,\,R\equiv\C[x_\alpha,|\al|=3]$, $f=a_0x_1x_2x_3+a_1x^3_1+a_2x_1^2x_2+a_3x_1x_2^2+a_4x_2^3+a_5x_2^2x_3+a_6x_2x_3^2+a_7x_3^3+a_8x_1x_3^2+a_9x^2_1x_3$, $b_\infty=x_1x_2x_3$.

Similar to the $\P^1$ case, we can show
\begin{claim}\label{basisLP2}For $X=\P^2$, at the LCSL we have a basis description
{\small\begin{align*}
\hat{\gg}\cdot(Re^{x_1x_2x_3})=&(\oplus_{k=1}^{\infty}\C\big((x_1x_2x_3)^{k}+(-1)^{k+1}k!\big)e^{x_1x_2x_3})\\
&\oplus(\oplus_{\iota_1+\iota_2+\iota_3>0, \iota_1,\iota_2,\iota_3\text{ not all equal, } 3|(\iota_1+\iota_2+\iota_3)}\C x_1^{\iota_1} x_2^{\iota_2}x_3^{\iota_3}e^{x_1x_2x_3}).
\end{align*}}
\end{claim}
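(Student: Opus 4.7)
The plan is to follow the three-step pattern of the proof of Claim~\ref{basis} for $\P^1$. Writing $A$ for the span of the generators on the right-hand side, I will (i) show $A\subseteq\hat{\gg}\cdot(Re^{x_1x_2x_3})$, (ii) verify by a triangular change of basis that $A\oplus\C\, e^{x_1x_2x_3}=Re^{x_1x_2x_3}$, and (iii) appeal to the duality $(Re^{f(b)}/\hat{\gg}\cdot(Re^{f(b)}))^*\simeq\sol(\tau)_b$ together with $\dim_{\C}\sol(\tau)_{b_\infty}=1$ at the LCSL of the $\P^2$ family---reflecting the fact that, of the two periods of the elliptic-curve family, exactly one extends holomorphically to $b_\infty$---to conclude $\dim_{\C}Re^{x_1x_2x_3}/\hat{\gg}\cdot(Re^{x_1x_2x_3})=1$, which together with (ii) forces $A=\hat{\gg}\cdot(Re^{x_1x_2x_3})$.

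For the Euler-type generators $((x_1x_2x_3)^k+(-1)^{k+1}k!)e^{x_1x_2x_3}$ the key computation is $(E+1)\bigl((x_1x_2x_3)^{k-1}e^{x_1x_2x_3}\bigr)=\bigl(k(x_1x_2x_3)^{k-1}+(x_1x_2x_3)^k\bigr)e^{x_1x_2x_3}$, where $E=\tfrac{1}{3}\sum_i x_i\partial_{x_i}$; this gives the $k=1$ generator directly and each higher $k$ by induction, subtracting $k$ times the previous generator. For the monomial generators $x^{\iota}e^{x_1x_2x_3}$ with $\iota$ not all equal, $|\iota|>0$, and $3\mid|\iota|$, by Lemma~\ref{basis of g} it suffices to exhibit each in $\sum_i\partial_{x_i}(\C[x]e^{x_1x_2x_3})$. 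After using $S_3$-symmetry to assume $\iota_1\leq\iota_2\leq\iota_3$, the case $\iota_1=0$ with $\iota_2,\iota_3\geq 1$ is handled by $\partial_{x_1}(x_2^{\iota_2-1}x_3^{\iota_3-1}e^{x_1x_2x_3})=x_2^{\iota_2}x_3^{\iota_3}e^{x_1x_2x_3}$, and the case $\iota_1,\iota_2,\iota_3\geq 1$ follows by strong induction on $|\iota|$ from $\partial_{x_1}(x_1^{\iota_1}x_2^{\iota_2-1}x_3^{\iota_3-1}e^{x_1x_2x_3})=(\iota_1\,x^{\iota-(1,1,1)}+x^{\iota})e^{x_1x_2x_3}$, since $\iota-(1,1,1)$ still has non-negative, not-all-equal entries of smaller total degree and hence eventually falls into a case with a zero entry.

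The main obstacle is the degenerate case $\iota_1=\iota_2=0$, $\iota_3\geq 3$ (and its $S_3$-images)---a pure power $x_3^{\iota_3}e^{x_1x_2x_3}$. Unlike the $\P^1$ situation, no single $\partial_{x_i}$ can produce such a pure power from a polynomial times the exponential, because $\partial_{x_i}(e^{x_1x_2x_3})$ always involves the other two coordinates. The remedy is to combine the Euler action with a partial derivative: $(E+1)(x_3^{\iota_3}e^{x_1x_2x_3})=\bigl(x_1x_2x_3^{\iota_3+1}+(\tfrac{\iota_3}{3}+1)x_3^{\iota_3}\bigr)e^{x_1x_2x_3}$ and $\partial_{x_1}(x_1x_3^{\iota_3}e^{x_1x_2x_3})=\bigl(x_3^{\iota_3}+x_1x_2x_3^{\iota_3+1}\bigr)e^{x_1x_2x_3}$ both lie in $\hat{\gg}\cdot(Re^{x_1x_2x_3})$; their difference cancels the $x_1x_2x_3^{\iota_3+1}$ term, leaving $\tfrac{\iota_3}{3}x_3^{\iota_3}e^{x_1x_2x_3}$, and dividing by $\iota_3/3\neq 0$ isolates the desired generator. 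Once this case (and its permutations) is settled, Step~(ii) is a routine triangular basis computation and Step~(iii) is the dimension input above, completing the argument.
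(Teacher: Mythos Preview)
Your proof is correct and follows essentially the same three-step template the paper uses for the $\P^1$ claim (the paper itself merely says ``Similar to the $\P^1$ case, we can show'' and then records $\dim_\C\sol(\tau)_{b_\infty}=1$). You supply the one genuinely new detail the paper omits: the pure-power monomials $x_j^{\iota_j}e^{x_1x_2x_3}$, which have no direct analogue in the $\P^1$ argument, and your trick of subtracting $\partial_{x_1}(x_1x_3^{\iota_3}e^{x_1x_2x_3})$ from $(E+1)(x_3^{\iota_3}e^{x_1x_2x_3})$ to isolate $\tfrac{\iota_3}{3}x_3^{\iota_3}e^{x_1x_2x_3}$ is clean and non-circular (neither input uses the inductive hypothesis).
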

In this case we know $\dim_\C \sol(\tau)_{b_\infty} =1$, so $\hat{\gg}\cdot(Re^{x_1x_2x_3})$ is of codimension $1$.

Now consider 
{\small\begin{align*}
&\delta e^f(b_\infty)=(\sum c_\alpha(\frac{\partial}{\partial a_0})^{\alpha_0}\cdots(\frac{\partial}{\partial a_9})^{\alpha_9})e^{f}(b_\infty)\\
=&(\sum c_\al x_1^{\al_0+3\al_1+2\al_2+\al_3+\al_8+2\al_9}x_2^{\al_0+\al_2+2\al_3+3\al_4+2\al_5+\al_6}x_3^{\al_0+\al_5+2\al_6+3\al_7+2\al_8+\al_9})\\
&\cdot e^{x_1x_2x_3}.
\end{align*}}
Let 
{\small\begin{equation}\label{P2Fermat}
\begin{cases}
\be_1:=&\al_0+3\al_1+2\al_2+\al_3+\al_8+2\al_9,\\
\be_2:=&\al_0+\al_2+2\al_3+3\al_4+2\al_5+\al_6,\\
\be_3:=&\al_0+\al_5+2\al_6+3\al_7+2\al_8+\al_9.
\end{cases}
\end{equation}}
By Claim \ref{basisLP2} we can see that there is no restriction on the coefficient $c_\al$ unless $$\be_1=\be_2=\be_3=|\al|.$$
Let $\delta e^f(b_\infty)\in \hat{\gg}\cdot(Re^{x_1x_2x_3})$, it forces 
\[\sum_{\be_1=\be_2=\be_3=d} c_\al (-1)^d(d!)=0.\]
Thus by Theorem \ref{equiv}, we have:
\begin{claim}
When $X=\P^2$, if the coefficients of  $\delta=\sum c_\al \left(\frac{\pa}{\pa a_0}\right)^{\al_0}\cdots\left(\frac{\pa}{\pa a_9}\right)^{\al_9}$
satisfy the linear relation
\[\sum_{\be_1=\be_2=\be_3=d} c_\al (-1)^d(d!)=0,\]
then $\delta\gs(b_\infty)=0$ for all $\gs\in \sol(\tau)_b.$
\end{claim}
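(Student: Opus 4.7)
The strategy is to invoke Theorem \ref{equiv}: showing $\delta\gs(b_\infty)=0$ for all $\gs\in \sol(\tau)_{b_\infty}$ is equivalent to showing $\delta e^{f(b_\infty)}\in \hat\gg\cdot(R e^{x_1x_2x_3})$, i.e.\ that the image of $\delta e^{f(b_\infty)}$ in the quotient $R e^{x_1x_2x_3}/\hat\gg\cdot(R e^{x_1x_2x_3})$ vanishes. Claim \ref{basisLP2} already provides an explicit basis for $\hat\gg\cdot(R e^{x_1x_2x_3})$, and (from the argument preceding Claim \ref{basisLP2}) the quotient is one-dimensional and represented by $e^{x_1x_2x_3}$. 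So the plan reduces to computing a single scalar, the class of $\delta e^{f(b_\infty)}$ modulo the basis in Claim \ref{basisLP2}.

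First, I would write out
\[
\delta e^{f(b_\infty)}=\sum_{\al} c_\al\, x_1^{\be_1}x_2^{\be_2}x_3^{\be_3}\,e^{x_1x_2x_3},
\]
with $\be_1,\be_2,\be_3$ as in \eqref{P2Fermat}. The immediate but crucial observation is that
\[
\be_1+\be_2+\be_3=3(\al_0+\al_1+\cdots+\al_9)=3|\al|,
\]
so the divisibility condition $3\mid(\be_1+\be_2+\be_3)$ required by Claim \ref{basisLP2} is automatic. Consequently, any monomial $x_1^{\be_1}x_2^{\be_2}x_3^{\be_3}e^{x_1x_2x_3}$ with $(\be_1,\be_2,\be_3)$ not all equal lies in $\hat\gg\cdot(R e^{x_1x_2x_3})$ and contributes $0$ to the class. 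Thus only the diagonal terms $\be_1=\be_2=\be_3=d$ (necessarily $d=|\al|$) survive.

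Next, for each $d\ge 1$ the basis relation
\[
\bigl((x_1x_2x_3)^d+(-1)^{d+1}d!\bigr)\,e^{x_1x_2x_3}\in \hat\gg\cdot(R e^{x_1x_2x_3})
\]
gives $(x_1x_2x_3)^d\,e^{x_1x_2x_3}\equiv (-1)^{d}d!\, e^{x_1x_2x_3}$ modulo $\hat\gg\cdot(R e^{x_1x_2x_3})$; for $d=0$ one has the trivial identity $e^{x_1x_2x_3}=(-1)^0\cdot 0!\cdot e^{x_1x_2x_3}$. Collecting the surviving contributions yields
\[
\delta e^{f(b_\infty)}\;\equiv\;\Bigl(\sum_{d\ge 0}\sum_{\substack{\be_1=\be_2=\be_3=d}} c_\al\,(-1)^d d!\Bigr)\,e^{x_1x_2x_3}\pmod{\hat\gg\cdot(R e^{x_1x_2x_3})}.
\]

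Finally, by hypothesis the scalar on the right-hand side is zero, so $\delta e^{f(b_\infty)}\in\hat\gg\cdot(R e^{x_1x_2x_3})$, and Theorem \ref{equiv} delivers the conclusion. The only genuinely substantive step is the sum identity $\be_1+\be_2+\be_3=3|\al|$ that forces all off-diagonal monomials into $\hat\gg\cdot(Re^{x_1x_2x_3})$; once this is noted the rest is bookkeeping using Claim \ref{basisLP2} and the one-dimensionality of the quotient (already known from $\dim_\C \sol(\tau)_{b_\infty}=1$).
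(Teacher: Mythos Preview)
Your proposal is correct and follows essentially the same approach as the paper: compute $\delta e^{f(b_\infty)}$ as a sum of monomials in $Re^{x_1x_2x_3}$, use Claim \ref{basisLP2} to discard all off-diagonal terms, reduce the diagonal terms $(x_1x_2x_3)^d e^{x_1x_2x_3}$ to $(-1)^d d!\, e^{x_1x_2x_3}$, and invoke Theorem \ref{equiv}. Your explicit observation that $\be_1+\be_2+\be_3=3|\al|$ (making the divisibility condition in Claim \ref{basisLP2} automatic) is a detail the paper leaves implicit, but otherwise the arguments are the same.
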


\bs
\textbf{Computation for $\P^1$ at the Fermat point.}
$X=\P^1,\, G=SL_2,\, b_F=x_1^2+x_2^2.$

Let $(-1)!!=1.$ By straightforward induction which we omit here, we can show
\begin{claim}For $X=\P^1$, at the Fermat point we have a basis description
{\small\begin{align*}
\hat{\gg}\cdot(Re^{x_1^2+x_2^2})=&\left(\oplus_{k\equiv l\equiv 1(\text{mod }2)}\C x_1^{k}x_2^le^{x_1^2+x_2^2}\right)
\oplus\\
&\left(\oplus_{k\equiv l\equiv 0(\text{mod }2),k+l\geq 2}\C\left(x_1^{k}x_2^l-(-1)^{\frac{k+l}{2}}\frac{(k-1)!!(l-1)!!}{2^{(k+l)/2}}\right)e^{x_1^2+x_2^2}\right).
\end{align*}}
\end{claim}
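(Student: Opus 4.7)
The plan is to apply Lemma \ref{basis of g}, which identifies $\hat{\gg}\cdot(Re^{x_1^2+x_2^2})$ with $Re^{x_1^2+x_2^2}\cap\sum_i\partial_{x_i}(\C[x]e^{x_1^2+x_2^2})$, and then to produce the two claimed families of basis elements as explicit images $\partial_{x_i}(g\,e^{x_1^2+x_2^2})$ for well-chosen $g\in\C[x]$. The split of the basis into ``odd-odd'' and ``even-even'' families mirrors the fact that $R=\C[x_1^2,x_2^2,x_1x_2]$ is spanned by monomials $x_1^k x_2^l$ with $k\equiv l\pmod 2$; the two parity cases are treated separately.

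For the odd-odd family I would argue by induction on $k+l$. The base case $k=l=1$ is $\partial_1(x_2\,e^{x_1^2+x_2^2}) = 2 x_1 x_2\,e^{x_1^2+x_2^2}$. For the inductive step (say $k\geq 3$; the case $l\geq 3$ is symmetric via $\partial_2$), the identity
\[\partial_1\bigl(x_1^{k-1}x_2^l\,e^{x_1^2+x_2^2}\bigr) = \bigl((k-1)x_1^{k-2}x_2^l+2x_1^k x_2^l\bigr)e^{x_1^2+x_2^2}\]
has both summands lying in $Re^{x_1^2+x_2^2}$ (both odd-odd), so Lemma \ref{basis of g} places the entire right side in $\hat{\gg}\cdot(Re^{x_1^2+x_2^2})$; combined with the inductive hypothesis applied to $x_1^{k-2}x_2^l\,e^{x_1^2+x_2^2}$, this yields $x_1^k x_2^l\,e^{x_1^2+x_2^2}\in\hat{\gg}\cdot(Re^{x_1^2+x_2^2})$, as required.

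For the even-even family, the very same identity (now with $k$ even, $k\geq 2$, and $l$ even) gives the congruence
\[x_1^k x_2^l\,e^{x_1^2+x_2^2}\equiv -\tfrac{k-1}{2}\,x_1^{k-2}x_2^l\,e^{x_1^2+x_2^2}\pmod{\hat{\gg}\cdot(Re^{x_1^2+x_2^2})}.\]
Iterating this recursion down to $k=0$ accumulates a factor of $(-1)^{k/2}(k-1)!!/2^{k/2}$, and applying the analogous recursion in $l$ (with the stated convention $(-1)!!=1$) reduces $x_2^l\,e^{x_1^2+x_2^2}$ to a multiple of $e^{x_1^2+x_2^2}$, producing exactly the stated constant $(-1)^{(k+l)/2}(k-1)!!(l-1)!!/2^{(k+l)/2}$; rearranging gives membership of the claimed element in $\hat{\gg}\cdot(Re^{x_1^2+x_2^2})$.

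To finish, the produced elements together with $e^{x_1^2+x_2^2}$ span $Re^{x_1^2+x_2^2}$ and are manifestly linearly independent (distinct leading monomials), so their span has codimension exactly one in $Re^{x_1^2+x_2^2}$. Since $\Delta(b_F)=-4\neq 0$ and the period sheaf has rank one on $\P^1$, the duality $\bigl(Re^{x_1^2+x_2^2}/\hat{\gg}\cdot(Re^{x_1^2+x_2^2})\bigr)^*\simeq\sol(\tau)_{b_F}$ (used exactly as in Claim \ref{basis}) forces $\hat{\gg}\cdot(Re^{x_1^2+x_2^2})$ itself to be codimension one, so our spanning set is in fact a basis of $\hat{\gg}\cdot(Re^{x_1^2+x_2^2})$. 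The only real challenge is keeping signs and double factorials consistent through the two commuting recursions.
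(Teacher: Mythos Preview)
Your proposal is correct and is precisely the ``straightforward induction'' the paper alludes to but omits: the same differentiation identity $\partial_1(x_1^{k-1}x_2^l e^{x_1^2+x_2^2})=((k-1)x_1^{k-2}x_2^l+2x_1^kx_2^l)e^{x_1^2+x_2^2}$ combined with Lemma~\ref{basis of g}, followed by the codimension-one argument via $\dim_\C\sol(\tau)_{b_F}=1$ exactly as in Claim~\ref{basis}. There is nothing to add.
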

In this case we know $\dim_\C \sol(\tau)_{b_F} =1$ and $\hat{\gg}\cdot(Re^{x_1^2+x_2^2})$ is of codimension $1$.

And by Theorem \ref{equiv}, we have:
\begin{claim}
When $X=\P^1$, if the coefficients of  
{\small$$\delta=\sum c_\al \left(\frac{\pa}{\pa a_0}\right)^{\al_0}\left(\frac{\pa}{\pa a_1}\right)^{\al_1}\left(\frac{\pa}{\pa a_2}\right)^{\al_2}$$}
satisfy the linear relation
{\small\[\sum_{\alpha_0\equiv 0\mod 2}c_{\al_0,\al_1,\al_2}(-1)^{\alpha_0+\al_1+\al_2}\frac{({\alpha_0+2\al_1}-1)!!({\alpha_0+2\al_2}-1)!!}{2^{(\alpha_0+\al_1+\al_2)}}
=0,\]}
then $\delta\gs(b_F)=0$ for all $\gs\in \sol(\tau)_b.$
\end{claim}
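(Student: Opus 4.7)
The plan is to reduce $\delta e^{f(b_F)}$ modulo the subspace $\hat{\gg}(Re^{f(b_F)})$ using the explicit basis supplied by the preceding claim, and then invoke Theorem \ref{equiv}. First I would rewrite $\delta e^f$ using the observation that $\partial_{a_0} f = x_1x_2$, $\partial_{a_1} f = x_1^2$, $\partial_{a_2} f = x_2^2$ are each independent of the $a_i$, so
\[
\delta e^f \;=\; \Big(\sum_\alpha c_\alpha (x_1x_2)^{\alpha_0} (x_1^2)^{\alpha_1} (x_2^2)^{\alpha_2}\Big) e^f \;=\; \Big(\sum_\alpha c_\alpha x_1^{\alpha_0+2\alpha_1} x_2^{\alpha_0+2\alpha_2}\Big) e^f.
\]
Setting $k(\alpha) := \alpha_0+2\alpha_1$, $l(\alpha) := \alpha_0+2\alpha_2$ and specializing at $b_F$ (i.e.\ $a_0 = 0$, $a_1 = a_2 = 1$) yields
\[
\delta e^{f(b_F)} \;=\; \Big(\sum_\alpha c_\alpha\, x_1^{k(\alpha)} x_2^{l(\alpha)}\Big)\, e^{x_1^2+x_2^2}.
\]

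Next I would split the sum by the parity of $\alpha_0$, noting that $k(\alpha)$ and $l(\alpha)$ share this parity. Terms with $\alpha_0$ odd contribute monomials $x_1^{k} x_2^{l} e^{x_1^2+x_2^2}$ with $k, l$ both odd; the basis claim shows these already lie in $\hat{\gg}(Re^{x_1^2+x_2^2})$, hence may be discarded. For terms with $\alpha_0$ even and $\alpha \neq 0$, so that $k + l \geq 2$ and $k \equiv l \equiv 0 \pmod 2$, the basis claim gives
\[
x_1^{k} x_2^{l} e^{x_1^2+x_2^2} \;\equiv\; (-1)^{(k+l)/2}\,\frac{(k-1)!!\,(l-1)!!}{2^{(k+l)/2}}\, e^{x_1^2+x_2^2} \pmod{\hat{\gg}(Re^{x_1^2+x_2^2})},
\]
and $(k(\alpha)+l(\alpha))/2 = \alpha_0+\alpha_1+\alpha_2$ converts exponents back to $\alpha$-indices. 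The boundary case $\alpha = 0$ (contributing $c_{0,0,0}\,e^{x_1^2+x_2^2}$) is absorbed into the same formula via the stated convention $(-1)!! = 1$.

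Collecting these reductions yields
\[
\delta e^{f(b_F)} \;\equiv\; C\cdot e^{x_1^2+x_2^2} \pmod{\hat{\gg}(Re^{x_1^2+x_2^2})},
\]
where
\[
C \;:=\; \sum_{\alpha_0 \text{ even}} c_\alpha\, (-1)^{\alpha_0+\alpha_1+\alpha_2}\, \frac{(\alpha_0+2\alpha_1-1)!!\,(\alpha_0+2\alpha_2-1)!!}{2^{\alpha_0+\alpha_1+\alpha_2}}.
\]
The hypothesis on the coefficients of $\delta$ is precisely $C = 0$, so that $\delta e^{f(b_F)} \in \hat{\gg}(Re^{f(b_F)})$. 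Applying Theorem \ref{equiv}, implication \eqref{cond2} $\Rightarrow$ \eqref{cond1}, finishes the argument.

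The argument is essentially bookkeeping once the preceding basis claim is in hand, and there is no real obstacle here: all of the nontrivial work lies in that claim, which identifies $\hat{\gg}(Re^{x_1^2+x_2^2})$ explicitly and ultimately rests on the codimension-one property $\dim_\C \sol(\tau)_{b_F} = 1$. With that ingredient available, the present statement reduces to matching monomials in $x_1, x_2$ and reading off the coefficient of $e^{x_1^2+x_2^2}$ in the quotient.
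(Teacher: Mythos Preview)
Your proposal is correct and follows exactly the approach the paper intends: it applies the preceding basis description of $\hat{\gg}(Re^{x_1^2+x_2^2})$ together with Theorem \ref{equiv}, precisely as the paper indicates (the paper itself records only the phrase ``by Theorem \ref{equiv}'' and leaves the monomial bookkeeping you wrote out implicit).
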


\bs
\textbf{Computation for $\P^2$ at the Fermat point.}
$X=\P^2,\, G=SL_3,\, b_F=x_1^3+x_2^3+x_3^3.$

Let $(-1)!!!=(-2)!!!=1$. By straightforward induction, we can show
\begin{claim}Let $\iota_0+\iota_1+\iota_2:=c$.
For $X=\P^2$, at the Fermat point we have a basis description\\
{\small$\hat{\gg}\cdot(Re^{x_0^3+x_1^3+x_2^3})=\left(\oplus_{\text{one of }\iota_i\equiv 2(\mod 3)}\C x_0^{\iota_0}x_1^{\iota_1}x_2^{\iota_2}e^{x_0^3+x_1^3+x_2^3}\right)
\oplus\\
\left(\oplus_{\iota_0\equiv\iota_1\equiv\iota_2\equiv 0(\mod 3),c\geq 3}\C\left(x_0^{\iota_0}x_1^{\iota_1}x_2^{\iota_2}-(-1)^{\frac{c}{3}}\frac{(\iota_0-2)!!!(\iota_1-2)!!!(\iota_2-2)!!!}{3^\frac{c}{3}}\right)e^{x_0^3+x_1^3+x_2^3}\right)\oplus\\
\left(\oplus_{\iota_0\equiv\iota_1\equiv\iota_2\equiv 1(\mod 3),c\geq 6}\C\left(x_0^{\iota_0}x_1^{\iota_1}x_2^{\iota_2}+(-1)^{\frac{c}{3}}\frac{(\iota_0-2)!!!(\iota_1-2)!!!(\iota_2-2)!!!}{3^{\frac{c}{3}-1}}x_0x_1x_2\right)e^{x_0^3+x_1^3+x_2^3}\right).$}
\end{claim}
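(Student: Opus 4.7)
The plan is to mirror the earlier Fermat-point computation for $\P^1$: combine Lemma~\ref{basis of g} with a recursive identity obtained by differentiating monomial $e^{b_F}$ sections to push each basis element of the RHS into $\hat{\gg}\cdot(Re^{b_F})$, then finish by a dimension count using $\dim\sol(\tau)_{b_F}=2$ (the period rank of a smooth plane cubic). The key identity is
$$\frac{\partial}{\partial x_i}\big(x_0^{\alpha_0}x_1^{\alpha_1}x_2^{\alpha_2}e^{b_F}\big)=\alpha_i\, x_0^{\alpha_0}\cdots x_i^{\alpha_i-1}\cdots e^{b_F}+3\, x_0^{\alpha_0}\cdots x_i^{\alpha_i+2}\cdots e^{b_F},$$
which, after re-indexing $\iota_i=\alpha_i+2$, yields for $\iota_i\geq 3$ and $\sum_j\iota_j\equiv 0\pmod 3$ the congruence
$$x_0^{\iota_0}x_1^{\iota_1}x_2^{\iota_2}e^{b_F}\equiv -\tfrac{\iota_i-2}{3}\, x_0^{\iota_0}\cdots x_i^{\iota_i-3}\cdots e^{b_F}\pmod{\hat{\gg}\cdot(Re^{b_F})}.$$
The boundary case $\iota_i=2$ becomes $x_0^{\iota_0}x_1^{\iota_1}x_2^{\iota_2}e^{b_F}=\tfrac{1}{3}\tfrac{\partial}{\partial x_i}(x_0^{\iota_0}\cdots x_i^0\cdots e^{b_F})$, which lies directly in $\hat{\gg}\cdot(Re^{b_F})$ by Lemma~\ref{basis of g}.

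Each of the three types is then handled by iterated application of this recursion. For Type~1, choose an index $i$ with $\iota_i\equiv 2\pmod 3$ and reduce $\iota_i$ by $3$'s until it equals $2$, at which point the boundary case closes the argument. For Type~2 (all $\iota_i\equiv 0\pmod 3$, $c\geq 3$), reduce each coordinate down to $0$; the $c/3$ resulting factors multiply to $(-1/3)^{c/3}\prod_i(\iota_i-2)!!!$ on $e^{b_F}$, matching the stated constant once one notes $(-2)!!!=1$. For Type~3 (all $\iota_i\equiv 1\pmod 3$, $c\geq 6$), reduce each coordinate down to $1$; the $(c-3)/3$ steps yield the factor $(-1/3)^{c/3-1}\prod_i(\iota_i-2)!!!$ on $x_0x_1x_2\, e^{b_F}$, and a sign flip puts the relation in the $``+\text{const}\cdot x_0x_1x_2"$ form stated in the claim.

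This establishes the inclusion of the RHS in $\hat{\gg}\cdot(Re^{b_F})$. The listed generators are manifestly linearly independent in $Re^{b_F}$, since their leading monomials $x_0^{\iota_0}x_1^{\iota_1}x_2^{\iota_2}$ are pairwise distinct and the corrections live in $\C\, e^{b_F}\oplus\C\, x_0x_1x_2\, e^{b_F}$; hence the RHS has codimension exactly $2$ in $Re^{b_F}$. Since $(Re^{b_F}/\hat{\gg}\cdot(Re^{b_F}))^{\ast}\simeq\sol(\tau)_{b_F}$ has dimension $2$, the containment is forced to be an equality. The main bookkeeping concern is verifying that the recursion stays inside $R$ at every step; this is automatic, because each step modifies a single $\iota_i$ by $3$ and thus preserves $\sum_j\iota_j\equiv 0\pmod 3$, while the intermediate monomial on which $\partial_{x_i}$ acts need not itself lie in $R$, as Lemma~\ref{basis of g} only requires the resulting $R$-element to agree with some image in $\sum_i\partial_{x_i}(\C[x]e^{b_F})$.
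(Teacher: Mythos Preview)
Your proof is correct and follows exactly the approach the paper intends: the paper states only ``By straightforward induction, we can show'' for this claim, and your argument supplies precisely that induction via the recursion $x^\iota e^{b_F}\equiv -\tfrac{\iota_i-2}{3}x^{\iota-3e_i}e^{b_F}$ together with the codimension-$2$ dimension count using $\dim\sol(\tau)_{b_F}=2$, mirroring the method spelled out in Claim~\ref{basis} for the $\P^1$ LCSL case.
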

In this case we know $\dim_\C \sol(\tau)_{b_F} =2$ and $\hat{\gg}\cdot(Re^{x_1^3+x_2^3+x_3^3})$ is of codimension $2$.

Then by Theorem \ref{equiv}, we have:
\begin{claim}
When $X=\P^2$, if the coefficients of  $\delta=\sum c_\al \left(\frac{\pa}{\pa a_0}\right)^{\al_0}\cdots\left(\frac{\pa}{\pa a_9}\right)^{\al_9}$
satisfy the linear relation
{\small\[
\begin{cases}
&\sum_{\be_1\equiv\be_2\equiv\be_3\equiv 0\mod 3}c_\al(-1)^{\frac{\be_1+\be_2+\be_3}{3}}\dfrac{(\be_1-2)!!!(\be_2-2)!!!(\be_3-2)!!!}{3^{\frac{\be_1+\be_2+\be_3}{3}}}=0,\\
&\sum_{\be_1\equiv\be_2\equiv\be_3\equiv 1\mod 3}c_\al(-1)^{\frac{\be_1+\be_2+\be_3}{3}-1}\dfrac{(\be_1-2)!!!(\be_2-2)!!!(\be_3-2)!!!}{3^{\frac{\be_1+\be_2+\be_3}{3}-1}}=0
\end{cases}
\]}
where $\be_i$ are defined in \eqref{P2Fermat},
then $\delta\gs(b_F)=0$ for all $\gs\in \sol(\tau)_b.$
\end{claim}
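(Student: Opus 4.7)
By Theorem~\ref{equiv}, the conclusion $\delta\gs(b_F)=0$ for all $\gs\in\sol(\tau)_{b_F}$ is equivalent to the membership $\delta e^{f(b_F)}\in\hat{\gg}(Re^{f(b_F)})$. So the plan is to compute $\delta e^{f(b_F)}$ explicitly and reduce it modulo $\hat{\gg}\cdot(Re^{b_F})$ using the basis description provided in the immediately preceding claim, then recognize that the two coefficients that survive in the reduction are exactly the two linear forms appearing in the hypothesis.

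Carrying this out: since $\delta$ is a constant-coefficient operator and $b_F=x_1^3+x_2^3+x_3^3$, one obtains
\[
\delta e^{f(b_F)}=\sum_{\al} c_\al\, x_1^{\be_1}x_2^{\be_2}x_3^{\be_3}\,e^{b_F},
\]
with $\be_1,\be_2,\be_3$ the linear forms in $\al$ given by \eqref{P2Fermat}. Next I split the sum according to the triple of residues $(\be_1,\be_2,\be_3)\bmod 3$. Invoking the three items in the preceding basis description of $\hat{\gg}\cdot(Re^{b_F})$: if some $\be_i\equiv 2\pmod 3$, the summand already lies in $\hat{\gg}\cdot(Re^{b_F})$; if all $\be_i\equiv 0\pmod 3$ and we set $c:=\be_1+\be_2+\be_3$, then
\[
x_1^{\be_1}x_2^{\be_2}x_3^{\be_3}e^{b_F}\equiv (-1)^{c/3}\,\frac{(\be_1-2)!!!(\be_2-2)!!!(\be_3-2)!!!}{3^{c/3}}\,e^{b_F}\pmod{\hat{\gg}\cdot(Re^{b_F})},
\]
the case $c=0$ being consistent with the convention $(-2)!!!=1$; and if all $\be_i\equiv 1\pmod 3$, then
\[
x_1^{\be_1}x_2^{\be_2}x_3^{\be_3}e^{b_F}\equiv (-1)^{c/3-1}\,\frac{(\be_1-2)!!!(\be_2-2)!!!(\be_3-2)!!!}{3^{c/3-1}}\,x_1x_2x_3\,e^{b_F}\pmod{\hat{\gg}\cdot(Re^{b_F})},
\]
the case $c=3$ (so $\be_1=\be_2=\be_3=1$) being the trivial identity, again matching the stated formula via $(-1)!!!=1$. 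Summing over $\al$, modulo $\hat{\gg}\cdot(Re^{b_F})$ we arrive at $\delta e^{f(b_F)}\equiv A\,e^{b_F}+B\,x_1x_2x_3\,e^{b_F}$, where $A$ and $B$ are precisely the two sums appearing in the claim.

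The previous claim records that $Re^{b_F}/\hat{\gg}\cdot(Re^{b_F})$ has dimension $2$, and by construction its two unaccounted monomials $1$ and $x_1x_2x_3$ yield representatives $e^{b_F}$ and $x_1x_2x_3\,e^{b_F}$ that span this quotient and are therefore linearly independent. Consequently $A=B=0$ forces $\delta e^{f(b_F)}\in\hat{\gg}\cdot(Re^{b_F})$, and Theorem~\ref{equiv} finishes the proof. The main obstacle I anticipate is the sign and normalization bookkeeping required to reconcile the three reduction cases above with the explicit basis elements of the preceding claim, particularly ensuring that the degenerate values $c=0$ and $c=3$ are absorbed cleanly by the triple-factorial conventions; once these are checked the argument is essentially a direct reorganization of the sum.
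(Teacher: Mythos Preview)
Your proposal is correct and follows exactly the approach the paper intends: the paper's own ``proof'' is literally the phrase ``Then by Theorem~\ref{equiv}, we have,'' leaving the reduction you carry out (apply $\delta$ to $e^f$, evaluate at $b_F$, and collapse each monomial $x_1^{\be_1}x_2^{\be_2}x_3^{\be_3}e^{b_F}$ against the basis description of $\hat\gg\cdot(Re^{b_F})$) entirely implicit. Your case split on the residues of $(\be_1,\be_2,\be_3)\bmod 3$ is exhaustive because $\be_1+\be_2+\be_3=3|\al|$, and your identification of $e^{b_F}$ and $x_1x_2x_3\,e^{b_F}$ as a basis of the codimension-$2$ quotient is exactly what the preceding claim provides; nothing further is needed.
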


\section{Expressions of $S$ and $T$}\label{appendix B}
The degree $4$ invariant $S$ of a ternary cubic equals (see \cite[p.167]{S})
{\small\begin{align*}
S=&- a_{300}a_{012}^2a_{120} + a_{012}^2a_{210}^2 + a_{300}a_{012}a_{021}a_{111}- a_{012}a_{021}a_{201}a_{210} - a_{012}a_{102}a_{120}a_{210} \\
&+ a_{030}a_{300}a_{012}a_{102} - 2a_{012}a_{111}^2a_{210} + 3a_{012}a_{111}a_{120}a_{201} - a_{030}a_{012}a_{201}^2 - a_{300}a_{021}^2a_{102} \\
&+ a_{021}^2a_{201}^2 + 3a_{021}a_{102}a_{111}a_{210} - a_{021}a_{102}a_{120}a_{201} - 2a_{021}a_{111}^2a_{201} + a_{003}a_{300}a_{021}a_{120} \\
&- a_{003}a_{021}a_{210}^2 + a_{102}^2a_{120}^2 - a_{030}a_{102}^2a_{210} - 2a_{102}a_{111}^2a_{120} + a_{030}a_{102}a_{111}a_{201} + a_{111}^4 \\
&+ a_{003}a_{111}a_{120}a_{210} - a_{003}a_{030}a_{300}a_{111} - a_{003}a_{120}^2a_{201} + a_{003}a_{030}a_{201}a_{210}.
\end{align*}
}

The degree $6$ invariant $T$ of the ternary cubic equals (see \cite[p.171]{S})
{\small\begin{align*}
T=& a_{003}^2a_{030}^2a_{300}^2 - 6a_{003}^2a_{030}a_{120}a_{210}a_{300} + 4a_{003}^2a_{030}a_{210}^3 + 4a_{003}^2a_{120}^3a_{300}  \\
&- 6a_{003}a_{012}a_{021}a_{030}a_{300}^2 + 18a_{003}a_{012}a_{021}a_{120}a_{210}a_{300} - 12a_{003}a_{012}a_{021}a_{210}^3 \\
&+ 12a_{003}a_{012}a_{030}a_{111}a_{210}a_{300} + 6a_{003}a_{012}a_{030}a_{120}a_{201}a_{300} - 12a_{003}a_{012}a_{030}a_{201}a_{210}^2 \\
&- 24a_{003}a_{012}a_{111}a_{120}^2a_{300} + 12a_{003}a_{012}a_{111}a_{120}a_{210}^2 + 6a_{003}a_{012}a_{120}^2a_{201}a_{210}\\
&- 24a_{003}a_{021}^2a_{111}a_{210}a_{300} - 12a_{003}a_{021}^2a_{120}a_{201}a_{300}+ 24a_{003}a_{021}^2a_{201}a_{210}^2 \\
&+ 6a_{003}a_{021}a_{030}a_{102}a_{210}a_{300}+ 12a_{003}a_{021}a_{030}a_{111}a_{201}a_{300} - 12a_{003}a_{021}a_{030}a_{201}^2a_{210}\\
&- 12a_{003}a_{021}a_{102}a_{120}^2a_{300} + 6a_{003}a_{021}a_{102}a_{120}a_{210}^2 + 36a_{003}a_{021}a_{111}^2a_{120}a_{300}\\
&+ 12a_{003}a_{021}a_{111}^2a_{210}^2- 60a_{003}a_{021}a_{111}a_{120}a_{201}a_{210} + 24a_{003}a_{021}a_{120}^2a_{201}^2 \\
&- 6a_{003}a_{030}^2a_{102}a_{201}a_{300}  + 4a_{003}a_{030}^2a_{201}^3 + 12a_{003}a_{030}a_{102}a_{111}a_{120}a_{300}\\
&- 24a_{003}a_{030}a_{102}a_{111}a_{210}^2 + 18a_{003}a_{030}a_{102}a_{120}a_{201}a_{210} - 20a_{003}a_{030}a_{111}^3a_{300} \\
&+ 36a_{003}a_{030}a_{111}^2a_{201}a_{210} - 24a_{003}a_{030}a_{111}a_{120}a_{201}^2  + 12a_{003}a_{102}a_{111}a_{120}^2a_{210}\\
& - 12a_{003}a_{102}a_{120}^3a_{201}  - 12a_{003}a_{111}^3a_{120}a_{210} + 12a_{003}a_{111}^2a_{120}^2a_{201} + 4a_{012}^3a_{030}a_{300}^2\\
& - 12a_{012}^3a_{120}a_{210}a_{300} + 8a_{012}^3a_{210}^3 - 3a_{012}^2a_{021}^2a_{300}^2 + 12a_{012}^2a_{021}a_{111}a_{210}a_{300} \\
&+ 6a_{012}^2a_{021}a_{120}a_{201}a_{300} - 12a_{012}^2a_{021}a_{201}a_{210}^2 -  12a_{012}^2a_{030}a_{102}a_{210}a_{300} \\
&- 24a_{012}^2a_{030}a_{111}a_{201}a_{300} + 24a_{012}^2a_{030}a_{201}^2a_{210} + 24a_{012}^2a_{102}a_{120}^2a_{300}\\
&+ 12a_{012}^2a_{111}^2a_{120}a_{300} - 24a_{012}^2a_{111}^2a_{210}^2 + 36a_{012}^2a_{111}a_{120}a_{201}a_{210} - 27a_{012}^2a_{120}^2a_{201}^2\\
& + 6a_{012}a_{021}^2a_{102}a_{210}a_{300} + 12a_{012}a_{021}^2a_{111}a_{201}a_{300} - 12a_{012}a_{021}^2a_{201}^2a_{210}\\
& + 18a_{012}a_{021}a_{030}a_{102}a_{201}a_{300} - 12a_{012}a_{021}a_{030}a_{201}^3 - 60a_{012}a_{021}a_{102}a_{111}a_{120}a_{300}\\
& + 36a_{012}a_{021}a_{102}a_{111}a_{210}^2 - 6a_{012}a_{021}a_{102}a_{120}a_{201}a_{210} - 12a_{012}a_{021}a_{111}^3a_{300}\\
&- 12a_{012}a_{021}a_{111}^2a_{201}a_{210} + 36a_{012}a_{021}a_{111}a_{120}a_{201}^2 - 12a_{012}a_{030}a_{102}^2a_{120}a_{300}\\
&+ 24a_{012}a_{030}a_{102}^2a_{210}^2 + 36a_{012}a_{030}a_{102}a_{111}^2a_{300} - 60a_{012}a_{030}a_{102}a_{111}a_{201}a_{210}\\
&+ 6a_{012}a_{030}a_{102}a_{120}a_{201}^2 + 12a_{012}a_{030}a_{111}^2a_{201}^2 - 12a_{012}a_{102}^2a_{120}^2a_{210} \\
&+ 36a_{012}a_{102}a_{111}a_{120}^2a_{201} + 24a_{012}a_{111}^4a_{210} - 36a_{012}a_{111}^3a_{120}a_{201}\\
& + 8a_{021}^3a_{201}^3 + 24a_{021}^2a_{102}^2a_{120}a_{300} - 27a_{021}^2a_{102}^2a_{210}^2 + 12a_{021}^2a_{102}a_{111}^2a_{300} \\
&+ 36a_{021}^2a_{102}a_{111}a_{201}a_{210}   - 12a_{021}^2a_{102}a_{120}a_{201}^2 - 24a_{021}^2a_{111}^2a_{201}^2 \\
&+ 6a_{021}a_{030}a_{102}^2a_{201}a_{210} + 12a_{021}a_{030}a_{102}a_{111}a_{201}^2 + 36a_{021}a_{102}^2a_{111}a_{120}a_{210} \\
&- 12a_{021}a_{102}^2a_{120}^2a_{201}- 36a_{021}a_{102}a_{111}^3a_{210} - 12a_{021}a_{102}a_{111}^2a_{120}a_{201}  \\
& + 4a_{030}^2a_{102}^3a_{300} - 3a_{030}^2a_{102}^2a_{201}^2 - 12a_{030}a_{102}^3a_{120}a_{210} + 12a_{030}a_{102}^2a_{111}^2a_{210}\\
&+ 12a_{030}a_{102}^2a_{111}a_{120}a_{201} - 12a_{030}a_{102}a_{111}^3a_{201} + 8a_{102}^3a_{120}^3 - 24a_{102}^2a_{111}^2a_{120}^2 \\
&- 3a_{003}^2a_{120}^2a_{210}^2+ 4a_{003}a_{021}^3a_{300}^2 - 12a_{012}^2a_{102}a_{120}a_{210}^2 - 12a_{012}a_{102}a_{111}^2a_{120}a_{210} \\
&- 24a_{021}a_{030}a_{102}^2a_{111}a_{300}+ 24a_{021}a_{111}^4a_{201} - 12a_{021}^3a_{102}a_{201}a_{300} \\
&+ 24a_{102}a_{111}^4a_{120} - 8a_{111}^6.
\end{align*}
}

\noindent\address {\SMALL J. Chen, Yau Mathematical Sciences Center, Tsinghua University, Haidian District, Beijing 100084, China\\ jychen@brandeis.edu.}
\vskip-.15in

\noindent\address {\SMALL A. Huang, Department of Mathematics, Brandeis University, Waltham MA 02454, U.S.A. \\ anhuang@brandeis.edu.}
\vskip-.15in

\noindent\address {\SMALL B.H. Lian, Department of Mathematics, Brandeis University, Waltham MA 02454, U.S.A.\\ lian@brandeis.edu.}
\vskip-.15in

\noindent\address {\SMALL S.-T. Yau, Department of Mathematics, Harvard University, Cambridge MA 02138, U.S.A. \\ yau@math.harvard.edu.}

\end{document}